\newif\ifrs
\ifrs \usepackage{mathrsfs} \fi  % Use \mathscr{*}
\newif\ifcol
\newtheorem{theorem*}{Theorem}[section]
\newtheorem{ass*}[theorem*]{Assumption}
\newtheorem{note*}[theorem*]{Note}
\newtheorem{lemma*}[theorem*]{Lemma}
\newtheorem{definition*}[theorem*]{Definition}
\newtheorem{proposition*}[theorem*]{Proposition}
\newtheorem{corollary*}[theorem*]{Corollary}
\newtheorem{remark*}[theorem*]{Remark}
\newtheorem{example*}[theorem*]{Example}
\numberwithin{equation}{section}
\newif\ifcol
\newcommand{\colorr}{\color[rgb]{0.8,0,0}}
\newcommand{\colorn}{\color[rgb]{1,1,1}}
\newcommand{\colorr}{\color{black}}% {{\color[rgb]{0.8,0,0}}
\newcommand{\colorn}{\color{black}}% {\color[rgb]{1,1,1}}
\def\bd{\begin{description}}
\def\ed{\end{description}}
\def\D2{\bbD_{2,\infty-}}
\def\D{{\bf D}}
\def\E{{\bf E}}
\def\F{{\bf F}}
\def\U{{\bf U}}
\def\calb{{\cal B}}
\def\cale{{\cal E}}
\def\calf{{\cal F}}
\def\cali{{\cal I}}
\def\call{{\cal L}}
\def\caln{{\cal N}}
\def\calq{{\cal Q}}
\def\cals{{\cal S}}
\def\calt{{\cal T}}
\def\calx{{\cal X}}
\def\be{\begin{equation}}
\def\ee{\end{equation}}
\def\bea{\begin{eqnarray}}
\def\eea{\end{eqnarray}}
\def\beas{\begin{eqnarray*}}
\def\eeas{\end{eqnarray*}}
\def\bi{\begin{itemize}}
\def\ei{\end{itemize}}
\def\im{\item}
\def\bd{\begin{description}}
\def\ed{\end{description}}
\def\l{\left}
\def\r{\right}
\newcommand{\bbD}{{\mathbb D}}
\newcommand{\reels}{\mathbb{R}}
\newcommand{\naturels}{\mathbb{N}}
\newcommand{\esp}{\mathbb{E}}
\newcommand{\proba}{\mathbb{P}}
\newcommand{\x}{\mathbf{x}}
\newcommand{\boldv}{\mathbf{v}}
\newcommand{\bt}{\mathbf{t}}
\date{} 
\begin{document}
 \title{Quasi-likelihood analysis for marked point processes and application to marked Hawkes processes}
 \author{Simon Clinet\footnote{Faculty of Economics, Keio University. 2-15-45 Mita, Minato-ku, Tokyo, 108-8345, Japan. E-mail: clinet@keio.jp. On behalf of all authors, the corresponding author states that there is no conflict of interest.
}}
%\date{This version: \today}

\maketitle 

\begin{abstract}
We develop a quasi-likelihood analysis procedure for a general class of multivariate marked point processes. As a by-product of the general method, we establish under stability and ergodicity conditions the local asymptotic normality of the quasi-log likelihood, along with the convergence of moments of quasi-likelihood and quasi-Bayesian estimators. To illustrate the general approach, we then turn our attention to a class of multivariate marked Hawkes processes with generalized exponential kernels, comprising among others the so-called Erlang kernels. We provide explicit conditions on the kernel functions and the mark dynamics under which a certain transformation of the original process is Markovian and $V$-geometrically ergodic. We finally prove that the latter result, which is of interest in its own right, constitutes the key ingredient to show that the generalized exponential Hawkes process falls under the scope of application of the quasi-likelihood analysis. %, and driven by marks whose distribution yields a Markovian representation of the global process.
\end{abstract}
\textbf{Keywords}: 
Marked point process; Marked Hawkes process; Quasi-likelihood analysis; Statistical inference; $V$-geometric ergodicity; Generalized exponential kernel;\\

\section{Introduction}

We introduce a general class of $d$-dimensional marked point processes (MPP) observed on the real half-line, and represented by a family of integer-valued random measures $\overline{N} = (\overline{N}^\alpha)_{\alpha=1,...,d}$ on $\reels_+ \times \mathbb{X}$. Assuming that the compensating measure $\nu^\alpha$ of $\overline{N}^\alpha$ is parametrized in the following form 
\bea \label{eqCompensatorIntro} 
\nu^\alpha(ds,dx,\theta) = f^\alpha(s,x,\theta)ds  \rho(dx) 
\eea
where $\theta \in \Theta \subset \mathbb{R}^n$, and for some measure $\rho(dx)$, we define the quasi-likelihood process for $T \geq 0$ as
\bea \label{likelihoodIntro}
l_T(\theta) = \sum_{\alpha=1}^d\int_{[0,T] \times \mathbb{X}} \textnormal{log} f^\alpha(s,x,\theta) \overline{N}^\alpha(ds,dx) - \sum_{\alpha=1}^d\int_{[0,T] \times \mathbb{X}} f^\alpha(s,x,\theta) ds  \rho(dx).
\eea  
When the horizon time $T \to +\infty$, and under ergodicity assumptions, our first concern is to carry out the so-called quasi-likelihood analysis (QLA), which consists in deriving a polynomial large deviation inequality for the field $l_T$, along with its local asymptotic normality (LAN). As a by-product, we establish the convergence of moments of the quasi-maximum likelihood estimator (QMLE) and of any quasi-Bayesian estimator (QBE), that is, if $\theta_T$ is such an estimator, then 
\bea \label{convMomentIntro}
\esp\l[u\l(\sqrt{T}(\theta_T- \theta^*)\r)\r] \to \esp\l[u\l(\Gamma^{-1/2} \xi\r)\r],
\eea 
where $u$ is any continuous function of polynomial growth, $\theta^*$ is the unknown parameter, $\Gamma \in \mathbb{R}^{ n \times n}$ is the asymptotic Fisher information and $\xi$ follows an $n$-dimensional standard normal distribution. \\

\smallskip 

There has been a growing literature on QLA over the past decade, for a large variety of stochastic processes. \cite{YoshidaPolynomial2011} first established general criteria for the derivation of polynomial large deviation inequalities, extending arguments from \cite{ibragimov1981statistical} and \cite{kutoyants1984parameter}. Subsequently, QLA has been successfully applied to a large panel of processes\footnote{a detailed list of applications of the QLA can be found in the introduction of \cite{yoshida2018partial}.}. In particular, general point processes were tackled in \cite{clinet2017statistical} (see also \cite{muni2020analyzing} for Cox-type models). Accordingly, our first contribution follows this line of research. To our knowledge, this is the first work that provides conditions for the convergence of moments (\ref{convMomentIntro}) for \textit{marked} point processes. Our main findings, in line with the conditions previously proposed in \cite{clinet2017statistical}, is that the key ingredient for the QLA in the context of an MPP is a family of laws of large numbers for certain transformations of the density $f^\alpha$, uniformly in $\theta$, and having a minimal rate of convergence in $T^\gamma$ for some $\gamma \in (0,1/2)$. These convergences are typically obtained for Markovian or exponentially mixing stationary processes although such conditions are not necessary in general.\\ 

\smallskip

Let us emphasize that, in light of convergence (\ref{convMomentIntro}), the QLA method provides stronger results than the standard asymptotic normality of QMLE and QBE, at the cost of stronger moment and ergodicity conditions than what is usually required. In return, the moment convergence (\ref{convMomentIntro}) paves the way to a large body of applications including, among others, information criteria-based model selection (see, e.g \cite{umezu2019aic,eguchi2018schwarz}) and local parametric estimation (see \cite{clinet2018statistical} for a locally parametric exponential Hawkes process).   \\

\smallskip

This first contribution can be naturally put into perspective with the existing literature on maximum likelihood estimation for point processes. In the unmarked case, the seminal work of \cite{OgataAsymptotic1978} proved the asymptotic normality of the maximum likelihood estimator (MLE) for ergodic stationary point processes. \cite{kutoyants1984parameter} (Theorems 4.5.5 and 4.5.6) provided general conditions for LAN and moment convergence of the MLE and Bayesian estimators (BE). Finally, \cite{clinet2017statistical} carried out the QLA in an ergodic framework. Other parametric methods can be consulted in \cite{karr1991point} and \cite{tuan1981estimation}. For marked point processes, \cite{nishiyama1995local} derived LAN results, although under the very restrictive assumption that $f(t,x,\theta) =y_t a(x,\theta)$ where $(y_t)_{t \geq 0}$ is known and predictable. Apart from specific structures (e.g. birth-and-death processes \cite{keiding1975maximum}), general results for the MLE have yet to be established.  \\ 

\smallskip 

Our second concern is the application of the QLA method to a class of generalized exponential kernel-based marked Hawkes processes, which extends the exponential Hawkes process in several ways, while retaining its strong mixing properties. There is a vast literature on the Hawkes process introduced in \cite{hawkes1971spectra}, with applications ranging from seismology \cite{ogata1988statistical}, to sociology \cite{crane2008robust} and finance \cite{bacry2015hawkes}. Recently, and in particular in financial data analysis, marked point processes models have emerged, including marked Hawkes processes as in \cite{fauth2012modeling}, \cite{rambaldi2017role}, \cite{morariu2018state}, \cite{clinet2019asymptotic}, \cite{richards2019score} and \cite{wu2019queue}. Letting $\lambda^\alpha(t,\theta)$ be the stochastic intensity of the \textit{counting process} $N^\alpha = \overline{N}^\alpha(\mathbb{X} \times \cdot)$ and $X_{t-}$ the mark of the last jump before $t$, we will say that $\overline{N}$ is a marked Hawkes process if
\bea \label{HawkesIntensityIntro} 
\lambda^\alpha(t,\theta ) = \phi_\alpha\l( \l(\int_{[0,t) \times \mathbb{X}} h_{\alpha\beta}(t-s, x,\theta) \overline{N}_\beta(ds, dx)\r)_{\beta =1,...,d}, X_{t-}, \theta\r). 
\eea 
Here, $h_{\alpha\beta}$ are the marked excitation kernels, and $\phi_\alpha$ are typically sub-linear functions in their first two arguments. We therefore allow the mark to affect both the shape of the cross-excitation functions $h_{\alpha\beta}$, and the baseline intensity through the dependence of $\phi_\alpha$ in $X_{t-}$. \\

\medskip

In light of our first contribution, applying the QLA method to such a process amounts to finding adequate conditions ensuring that a process satisfying (\ref{HawkesIntensityIntro}) is ergodic with fast rate $T^\gamma$. Lately, a few papers focused on the establishment of strong ergodicity and mixing results for the exponential Hawkes process by taking advantage of its Markovian structure. \cite{abergel2015long} gave a linear Lyapounov function for the multivariate exponential Hawkes process; \cite{clinet2017statistical} derived the full $V$-geometric ergodicity of the process for an exponential Lyapounov function, when the spectral radius of the associated excitation matrix is smaller than unity. \cite{duarte2019stability} tackled the case of an Erlang kernel in a univariate framework; Finally \cite{dion2020exponential} dealt with non-linear exponential kernel Hawkes process with possible inhibition. In this work, we build on this strand of research for the marked case. Our main focus lies in the situation where the kernels are of the form 
\bea \label{expoRepresnetationIntro}
h_{\alpha\beta}(s, x, \theta) = \langle A_{\alpha\beta}(\theta) | e^{-sB_{\alpha\beta}(\theta)}\rangle g_{\alpha\beta}(x,\theta)
\eea
where $A_{\alpha\beta}(\theta), B_{\alpha\beta}(\theta) \in \mathbb{R}^{p \times p}$ for some $p \geq 1$, $\langle \cdot | \cdot \rangle$ denotes the usual inner product on $\mathbb{R}^{p \times p}$, and $g_{\alpha\beta}$ accounts for the impact of the mark process on the cross-excitation. The temporal term in (\ref{expoRepresnetationIntro}) generalizes the aforementioned models, and we prove that a function has the shape $ \langle A | e^{ - \cdot B} \rangle$ and is integrable if and only if it is a linear combination of terms of the form $P\cdot C \cdot e^{-r \cdot}$, where $P$ is polynomial, $C$ is a sine or a cosine, and $r>0$. We finally focus on the case where the dynamics of the mark process follows a transition kernel yielding a Markovian representation of the marked point process. \\

\smallskip

Our second contribution goes as follows. First, we provide adequate assumptions on the mark transition kernel and on $h_{\alpha\beta}$ that ensure $V$-geometric ergodicity of the process. When the marks are independent and identically distributed, this condition reduces to the standard assumption on the spectral radius of the excitation matrix, integrated over the mark space. When the marks admit a more intricate dynamics, the condition is more restrictive. We find in particular that the mark process itself needs to satisfy a certain drift condition. This is briefly illustrated on the queue-reactive Hawkes model which has recently been considered in \cite{wu2019queue}. The novelty of this contribution lies in the general form of the temporal component in (\ref{expoRepresnetationIntro}) along with the presence of marks in the excitation kernel. Second, we make use of this strong mixing property to show that the QLA applies to the generalized exponential marked Hawkes process when $\phi_\alpha$ is linear in the excitations. \\

\smallskip

The paper is structured as follows. Section 2 details the QLA for an ergodic marked point process. Section 3 introduces the marked Hawkes process and establishes its $V$-geometric ergodicity when its excitation kernel is matrix-exponential, under well-chosen stability and non-degeneracy assumptions. Section 4 shows the application of the QLA to the generalized exponential marked Hawkes process. We conclude in Section 5. Proofs and technical details are relegated to the Appendix.  

\section{Quasi-likelihood analysis for marked point processes: a general framework} \label{sectionQLA}

In this section, we introduce our general statistical framework. Our main results are Theorem \ref{thmLAN} and Theorem \ref{thmQLA2} which respectively prove the local asymptotic normality of the quasi-likelihood process and the convergence of moments of the QMLE and the QBE.

\subsection*{Notation for the marked point process}
We assume that we are given a stochastic basis $\calb=(\Omega,\calf,\F,\proba)$ with a filtration $\F = (\calf_t)_{t \in \reels_+}$, that contains all the observed processes necessary to the statistical inference. The marked point process is defined as follows.

\begin{definition*} \label{defMPP}
Assume the existence for $\alpha = 1,...,d$, $d \in \naturels - \{0\}$ of a sequence of couples $(T_i^\alpha,X_i^\alpha)_{i \geq 1}$ such that 

\begin{itemize} 
\item The $T_i^\alpha$'s are $\F$-stopping times, with
$$0 < T_1^\alpha < \cdots < T_N^\alpha < \cdots < +\infty \textnormal{, } \proba-\textnormal{a.s}$$
and $T_N^\alpha \to^{a.s} +\infty$ when $N \to + \infty$. 
\item The $X_i^\alpha$'s are $\calf_{T_i^\alpha}$-measurable random variables taking values in the measurable space $(\mathbb{X},\calx)$.
\end{itemize}
We call multivariate marked point process $\overline{N} = (\overline{N}^\alpha)_{\alpha = 1,...,d}$ the family of random measures on $\reels_+ \times \mathbb{X}$, adapted to $\F$, and defined by $\overline{N}^\alpha(ds,dx) = \sum_{i=0}^{+\infty} \delta_{(T_i^\alpha,X_i^\alpha)}(ds,dx)$.
\end{definition*}  

Hereafter, we will denote by $\nu^\alpha(ds,dx)$ the so-called compensator of $\overline{N}^\alpha(ds,dx)$, that is the unique predictable measure such that $\overline{N}^\alpha([0,t] \times A) - \nu^X([0,t] \times A)$ is a local martingale for any set $A \in \calx$ (\cite{JacodLimit2003}, Theorem 1.8). \\
\smallskip 

It is convenient to associate to $\overline{N}$ a \textit{counting process} $N$, which is simply defined for any $\alpha \in \{1,...,d\}$ as $N_t^\alpha = \overline{N}^\alpha([0,t] \times \mathbb{X})$, $t \in \reels_+$, and its compensator $\Lambda_t^\alpha = \nu^\alpha([0,t] \times \mathbb{X})$. Hereafter, we will always assume that the covariates of $N$ do not have common jump times: for $\alpha,\beta \in \{1,...,d\}$ with $\alpha \neq \beta$, $\Delta N^\alpha \Delta N^\beta = 0 \textnormal{, } \proba-\textnormal{a.s.}$ In what follows, we will often use the notations $\tilde{M}^\alpha = \overline{N}^\alpha - \nu^\alpha$ and $\tilde{N}^\alpha = N^\alpha - \Lambda^\alpha$ which are both local martingale measures.\\

\smallskip

We finally turn our attention to the parametrization of the marked point process proposed in (\ref{eqCompensatorIntro}). We assume that there exists $\theta^* \in \Theta \subset \reels^n$ for some $n \in \naturels$ and where $\Theta$ is convex, open and bounded\footnote{We also always assume that $\Theta$ is regular enough so that the Sobolev embedding Theorem holds (see \cite{adams2003sobolev}, Theorem 4.12,  p. 85)}, $\rho$ a measure on $\mathbb{X}$ and $f^\alpha$ a non-negative function such that 
$$\nu^\alpha(ds,dx) = f^\alpha(s,x,\theta^*) ds  \rho(dx), \alpha \in \{1,...,d\}$$
and we gather those densities in the vector $f = (f^1,...,f^d)$. It will be convenient to put for $\alpha \in \{1,...,d\}$ and $t \geq 0$
\beas 
\lambda^{\alpha}(t,\theta) = \int_{\mathbb{X}} f^\alpha(t,x,\theta) \rho(dx)
\eeas 
and 
\bea \label{decompoLambdaQ} 
q^{\alpha}(t,x,\theta) = \lambda^{\alpha}(t,\theta)^{-1} \mathbb{1}_{\{\lambda^{\alpha}(t,\theta) \neq 0\}} f^\alpha(t,x,\theta).
\eea 
The above decomposition allows us to see $\lambda^\alpha$ as the stochastic intensity of $N^\alpha$, which encodes the dynamics of the waiting times through the informal equality $\esp[dN_t^\alpha|\calf_{t-}] = \lambda^\alpha(t,\theta)dt$, whereas $q^\alpha(t,\cdot, \theta)$ can be seen as as the density of a mark on the covariate $\alpha$ at time $t$ given $\calf_{t-}$. In particular, as soon as $\lambda^{\alpha}(t,\theta) \neq 0$, we have $\int_{\mathbb{X}} q^\alpha(t,x,\theta)\rho(dx) = 1$.

\subsection*{Quasi-Likelihood and main assumptions}
We start with mild conditions on $f$ that ensure the existence of the quasi-log likelihood process below.

\bd
\im[[A1\!\!]]
The mapping $f : \Omega \times \reels_+ \times \mathbb{X} \times \Theta \to \reels_+^d$ is $\calf \otimes \mathbf{B}(\reels_+) \otimes \calx \otimes \mathbf{B}(\Theta)$-measurable. Moreover, almost surely,
  \bd
		\im[{\bf (i)}] for any $\theta \in \Theta$, $f(\cdot,\cdot,\theta)$ is a predictable function on $\Omega \times \reels_+ \times \mathbb{X}$.
		\im[{\bf (ii)}] for any $(s,x) \in \reels_+ \times \mathbb{X}$, $\theta \to f(s,x,\theta)$ is in $C^4(\Theta)$, and admits a continuous extension to $\overline{\Theta}$.
		\im[{\bf (iii)}] For any $\theta \in \Theta$, $\alpha \in \{1,...,d\}$, $f^\alpha(t,x,\theta) = 0 $ if and only if  $f^\alpha(t,x, \theta^{*}) =0, dt \rho(dx)$-a.e.
	\ed
\ed

The quasi log-likelihood process is defined at time $T \in \reels_+$ and point $\theta \in \Theta$ as   
\bea \label{eqLogLik}
l_T(\theta) = \sum_{\alpha=1}^d\int_{[0,T] \times \mathbb{X}} \textnormal{log} f^\alpha(s,x,\theta) \overline{N}^\alpha(ds,dx) - \sum_{\alpha=1}^d\int_{[0,T] \times \mathbb{X}} f^\alpha(s,x,\theta) ds  \rho(dx).
\eea 
Note that $l_T$ is indeed (up to a constant term) the log-likelihood process related to $\overline{N}$ as soon as $\F$ is the canonical filtration associated to $\overline{N}$ (\cite{JacodLimit2003}, Theorem 5.43). However, in the present case, $\F$ and $l_T$ may involve additional explanatory processes with no restrictions (apart from the fact that they must be observable so that $l_T$ can be computed in practice). We call quasi maximum likelihood estimator any quantity $\hat{\theta}_T$ satisfying
\bea
\hat{\theta}_T \in \textnormal{argmax}_{\theta \in \Theta} l_T(\theta)
\eea 
if the right-hand side is non-empty. For a given continuous prior density $p : \Theta \to \reels_+$ satisfying $0 < \inf_{\theta \in \Theta} p(\theta) \leq \sup_{\theta \in \Theta} p(\theta) < +\infty$, we also define the associated Bayesian estimator
\bea 
\tilde{\theta}_T = \frac{\int_\Theta \theta \textnormal{exp}(l_T(\theta)) p(\theta)d\theta}{\int_\Theta \textnormal{exp}(l_T(\theta)) p(\theta)d\theta}.  
\eea 
 In light of (\ref{decompoLambdaQ}), (\ref{eqLogLik}) can be rewritten as
\bea 
l_T(\theta) = \underbrace{\sum_{\alpha=1}^d \int_0^T\textnormal{log} \lambda^\alpha(t,\theta)dN_t^\alpha -\sum_{\alpha=1}^d \int_0^T \lambda^\alpha(t,\theta)dt}_{l_T^{(1)}(\theta)} + \underbrace{\sum_{\alpha=1}^d\int_{[0,T] \times \mathbb{X}} \textnormal{log} q^\alpha(s,x,\theta) \overline{N}^\alpha(ds,dx)}_{l_T^{(2)}(\theta)},
\eea 
where $l_T^{(1)}(\theta)$ accounts for the contribution of the waiting times and $l_T^{(2)}(\theta)$ accounts for that of the marks to the global log-likelihood process. The next assumption gives standard moment and smoothness conditions that ensure the desired large deviation inequality on $\textnormal{exp}(l_T)$ necessary for the convergence of moments of the QMLE and the QBE. We use the following notations. For a vector or a matrix $x$, $|x| = \sum_{i} |x_i|$. Moreover, $\|X\|_p = \esp[|X|^p]^{1/p}$.

\bd
\im[[A2\!\!]]
The density process $f$ and its first derivatives in $\theta$ satisfy, for any $p > 1$, for any $\alpha \in \{1,...,d\}$
	\bd
		\im[(i)] $\sup_{t \in \reels_+}  \sum_{i=0}^3{\left\| \sup_{\theta \in \Theta}\l|\partial_\theta^i \lambda^\alpha(t,\theta)\r|\right\|_p} <+\infty,$
		\im[(ii)]  $\sup_{t \in \reels_+} \left\| \sup_{\theta \in \Theta}\l|\lambda^\alpha(t,\theta)^{-1}\r| \mathbb{1
		}_{\{\lambda^\alpha(t,\theta) \neq 0\}}\right\|_p <+\infty,$
		\im[(iii)] $\sup_{t \in \reels_+}  \sum_{i=0}^3 \int_\mathbb{X} \esp \l[\sup_{\theta \in \Theta} \l|\partial_\theta^i \textnormal{log}q^\alpha(t,x,\theta)\r|^p   q^\alpha(t,x,\theta^*)\r]\rho(dx)  <+\infty,$
		\im[(iv)] $\sup_{t \in \reels_+}  \sum_{i=0}^3 \int_\mathbb{X} \esp \l[\sup_{\theta \in \Theta} \l|\partial_\theta^i \textnormal{log}q^\alpha(t,x,\theta)\r|^{-p} q^\alpha(t,x,\theta^*) \r] \rho(dx)  <+\infty.$
	\ed
\ed

Crucial to our analysis, and standard in the literature on QLA (see e.g Assumption [A6] in \cite{YoshidaPolynomial2011}, Assumption [A3] in \cite{clinet2017statistical}), is the uniform convergence of the scaled process $T^{-1}l_T(\theta)$ to a limit $\mathbb{Y}(\theta)$ with a rate of convergence of the shape $T^{\gamma}$ for some $\gamma \in (0,1/2)$. The next condition aims to provide such a property by assuming the $\mathbb{L}^p$ convergences of particular time averaged transformations of $\lambda$ and $q$ with a minimal rate of convergence. In what follows, we let $E = \reels_+ \times \reels_+ \times \reels^n$, and we define $D_{\uparrow}(E,\reels)$ as the set of functions $\phi : E \to \reels$ such that: (i) $\phi$ is of class $C^1$ on $(\reels_+ - \{0\}) \times (\reels_+ - \{0\}) \times \reels^n$; (ii) For $(u,v,w) \in E$, $\phi$ and $|\nabla \phi|$ are of polynomial growth in $(u,v,w,\frac{\mathbb{1}_{\{u \neq 0\}}}{u},\frac{\mathbb{1}_{\{v \neq 0\}}}{v})$; (iii) $\phi(0,v,w) = \phi(u,0,w) = 0$.

\bd
\im[[A3\!\!]]
There exists $\gamma \in (0,1/2)$, there exist $\pi_\alpha : D_{\uparrow}(E,\reels) \times \Theta \to \reels$ and $\chi_\alpha : \{0,1,2\} \times \Theta \to \reels$ such that for any $\phi \in D_{\uparrow}(E,\reels)$, for any $p \geq 1$ 
\beas 
\sup_{\theta \in \Theta} T^{\gamma} \l\|T^{-1} \int_0^T \phi(\lambda^\alpha(s,\theta^*), \lambda^\alpha(s,\theta), \partial_\theta \lambda^\alpha(s,\theta))ds - \pi_\alpha(\phi,\theta)\r\|_p \to 0,
\eeas 
and for $k \in \{0,1,2\}$
\beas 
\sup_{\theta \in \Theta} T^{\gamma} \l\|T^{-1} \int_0^T \int_{\mathbb{X}} \partial_\theta^{k} \textnormal{log}q^\alpha(s,x,\theta) q^\alpha(s,x,\theta^*)\rho(dx) \lambda^\alpha(s,\theta^*)ds - \chi_\alpha(k,\theta)\r\|_p \to 0.
\eeas

% \beas 
% \sup_{\theta \in \Theta} T^{\gamma} \l\|T^{-1} \int_0^T \int_{\mathbb{X}} \psi(q^\alpha(s,x,\theta^*), q^\alpha(s,x,\theta), \partial_\theta q^\alpha(s,x,\theta))\rho(dx) \lambda^\alpha(s,\theta^*)ds - \mu^\alpha(\psi,\theta)\r\|_p \to 0.
% \eeas

\ed

Condition \textbf{[A3]} essentially assumes the ergodicity of the couple $(\lambda,q)$, uniformly in $\theta \in \Theta$. Note also that the exponent $\gamma$ in the rate of convergence should be positive, and in general cannot be larger than or equal to $1/2$ since the above laws of large numbers often also satisfy central limit theorems in $\sqrt{T}$. A fundamental example where \textbf{[A3]} can be established is the case where the couple $(\lambda,q)$ is mixing with a polynomial rate $t^{-\epsilon}$ where $\epsilon > 2\gamma/(1-2\gamma)$ (see \cite{clinet2017statistical}, \textbf{[M2]} and Lemma 3.16, p. 1815). This is the strategy that we will adopt when proving \textbf{[A3]} for the marked Hawkes process considered in Section \ref{sectionQLAHawkes}. It is worth mentioning that in principle, as illustrated in \cite{clinet2017statistical}, Examples 3.2-3.6, very different families of models enjoy such laws of large numbers, which is what makes \textbf{[A3]} quite general. 

\subsection*{Asymptotic properties}

We are now ready to derive the asymptotic properties of the log-likelihood field. In the following, we always extend the $\textnormal{log}$ function to $0$ by putting $\textnormal{log}(0) = 0$, for notational simplicity. Put for $i \in \{1,2\}$

\bea 
\mathbb{Y}_T^{(i)}(\theta) = T^{-1}(l_T^{(i)}(\theta) - l_T^{(i)}(\theta^*)),
\eea 
\bea
\Delta_T^{(i)} = T^{-1/2} \partial_\theta l_T^{(i)}(\theta^*)
\eea 
and 
\bea \label{gammaRepresentation} 
\Gamma_T^{(i)}(\theta) = - T^{-1} \partial_{\theta}^2 l_T^{(i)}(\theta),
\eea 
which, after some algebraic manipulations yield the expressions
\beas  
\mathbb{Y}_T^{(1)}(\theta) = T^{-1}\sum_{\alpha=1}^d \int_0^T \textnormal{log}\frac{\lambda^\alpha(t,\theta)}{\lambda^\alpha(t,\theta^*)}dN_t^\alpha -T^{-1}\sum_{\alpha=1}^d \int_0^T \l[\lambda^\alpha(t,\theta)-\lambda^\alpha(t,\theta^*)\r]dt,
\eeas  
\beas 
\mathbb{Y}_T^{(2)}(\theta)  = T^{-1}\sum_{\alpha=1}^d\int_{[0,T] \times \mathbb{X}} \textnormal{log} \frac{q^\alpha(t,x,\theta)}{q^\alpha(t,x,\theta^*)} \overline{N}^\alpha(dt,dx),
\eeas 
\beas  
 \Delta_T^{(1)} = T^{-1/2}\sum_{\alpha=1}^d\int_0^T  \frac{\partial_\theta \lambda^\alpha(t, \theta^*)}{\lambda^\alpha(t,\theta^*)} \mathbb{1}_{\{\lambda^\alpha(t,\theta^*) \neq 0\}} d\tilde{N}_t^\alpha,  
\eeas  
\beas 
 \Delta_T^{(2)} = T^{-1/2}\sum_{\alpha=1}^d\int_{[0,T] \times \mathbb{X}}   \partial_\theta \textnormal{log} q^\alpha(t,x, \theta^*)\mathbb{1}_{\{q^\alpha(t,x,\theta^*) \neq 0\}} \tilde{M}^\alpha(dt,dx), 
\eeas 
where we have used that that $\int_{\mathbb{X}} q(t,x,\theta) \rho(dx) = 1$, and at point $\theta^*$ 
\beas 
\Gamma_T^{(1)}(\theta^*) = -T^{-1}\sum_{\alpha=1}^d\int_0^T \partial_\theta^2 \textnormal{log} \lambda^\alpha(t,\theta^*)d\tilde{N}_t^\alpha + T^{-1}\sum_{\alpha=1}^d\int_0^T \frac{\partial_\theta \lambda(t, \theta^*)^{\otimes 2}}{\lambda^\alpha(t,\theta^*)} \mathbb{1}_{\{\lambda^\alpha(t,\theta^*) \neq 0\}}dt,  
\eeas  
and
\beas 
\Gamma_T^{(2)} (\theta^*)= -T^{-1}\sum_{\alpha=1}^d \int_{[0,T] \times \mathbb{X}} \partial_{\theta}^2 \textnormal{log} q^\alpha(t,x,\theta^*) \overline{N}^\alpha(dt,dx), 
\eeas 
where for a vector $x \in \reels^n$, $x^{\otimes 2} = x\cdot x^T \in \reels^{n \times n}$. 
% We define 
% \bea 
% \mathbb{Y}_{\theta} = \sum_{\alpha=1}^d\int_E \l[\textnormal{log}\frac{v}{u}u - (v-u) \r] \pi^\alpha(du,dv,\reels_+) +  \sum_{\alpha=1}^d \int_E' \textnormal{log}\frac{v}{u}u 
% \eea 

\begin{lemma*} \label{lemY}
Assume \textnormal{\textbf{[A1]-[A3]}}. There exists $\mathbb{Y}(\theta)$ such that for any $p \geq 1$, as $T \to +\infty$
$$  T^\gamma \|\sup_{\theta \in \Theta} |\mathbb{Y}_T(\theta) - \mathbb{Y}(\theta)| \|_p \to 0.$$
\end{lemma*}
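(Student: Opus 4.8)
The plan is to decompose $\mathbb{Y}_T(\theta) = \mathbb{Y}_T^{(1)}(\theta) + \mathbb{Y}_T^{(2)}(\theta)$ and handle each piece separately, reducing everything to the uniform laws of large numbers in \textbf{[A3]} via the following mechanism: write each integral against a random measure $\overline{N}^\alpha$ or $N^\alpha$ as the sum of its compensated (martingale) part plus the predictable compensator, control the martingale part using Burkholder--Davis--Gundy together with the moment bounds \textbf{[A2]}, and identify the compensator with an integral of the type appearing in \textbf{[A3]}. Concretely, for $\mathbb{Y}_T^{(1)}$ I would first split $T^{-1}\int_0^T \textnormal{log}\frac{\lambda^\alpha(t,\theta)}{\lambda^\alpha(t,\theta^*)}dN_t^\alpha$ into $T^{-1}\int_0^T \textnormal{log}\frac{\lambda^\alpha(t,\theta)}{\lambda^\alpha(t,\theta^*)}d\tilde{N}_t^\alpha$ plus $T^{-1}\int_0^T \textnormal{log}\frac{\lambda^\alpha(t,\theta)}{\lambda^\alpha(t,\theta^*)}\lambda^\alpha(t,\theta^*)dt$. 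The second term is precisely of the form $T^{-1}\int_0^T \phi(\lambda^\alpha(s,\theta^*),\lambda^\alpha(s,\theta),\partial_\theta\lambda^\alpha(s,\theta))ds$ with $\phi(u,v,w) = u\,\textnormal{log}(v/u)$ — one checks $\phi \in D_{\uparrow}(E,\reels)$ (it vanishes when $u=0$ or $v=0$ with the $\textnormal{log}(0)=0$ convention, and $\phi$, $|\nabla\phi|$ have the required polynomial growth in $u,v,1/u$), so \textbf{[A3]} gives the $T^\gamma$-rate convergence to $\pi_\alpha(\phi,\theta)$ uniformly in $\theta$. Combined with the analogous treatment of $T^{-1}\int_0^T[\lambda^\alpha(t,\theta)-\lambda^\alpha(t,\theta^*)]dt$ (now $\phi(u,v,w)=v-u$, again in $D_{\uparrow}$), this identifies the limit $\mathbb{Y}^{(1)}(\theta) = \sum_\alpha(\pi_\alpha(\phi_1,\theta) - \pi_\alpha(\phi_2,\theta))$.

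For $\mathbb{Y}_T^{(2)}$, the same splitting applied to $\overline{N}^\alpha$ gives a martingale-measure term $T^{-1}\int_{[0,T]\times\mathbb{X}}\textnormal{log}\frac{q^\alpha(t,x,\theta)}{q^\alpha(t,x,\theta^*)}\tilde{M}^\alpha(dt,dx)$ plus the compensator term $T^{-1}\int_{[0,T]\times\mathbb{X}}\textnormal{log}\frac{q^\alpha(t,x,\theta)}{q^\alpha(t,x,\theta^*)}q^\alpha(t,x,\theta^*)\rho(dx)\lambda^\alpha(t,\theta^*)dt$. Writing $\textnormal{log}\frac{q^\alpha(t,x,\theta)}{q^\alpha(t,x,\theta^*)} = \textnormal{log}q^\alpha(t,x,\theta) - \textnormal{log}q^\alpha(t,x,\theta^*)$, the compensator term becomes a difference of two quantities each covered by the second family of convergences in \textbf{[A3]} (the case $k=0$), so it converges uniformly at rate $T^\gamma$ to $\chi_\alpha(0,\theta)$-type limits; this yields $\mathbb{Y}^{(2)}(\theta)$ and hence $\mathbb{Y}(\theta) := \mathbb{Y}^{(1)}(\theta) + \mathbb{Y}^{(2)}(\theta)$.

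It then remains to show the two martingale remainders, after taking $\sup_{\theta\in\Theta}$ and the $\mathbb{L}^p$ norm, are $o(T^{-\gamma})$ — indeed they should be $O_{\mathbb{L}^p}(T^{-1/2})$, which is $o(T^{-\gamma})$ since $\gamma<1/2$. For a fixed $\theta$ this is routine: BDG gives $\|T^{-1}\int_0^T g(t,\theta)d\tilde{N}_t^\alpha\|_p \lesssim T^{-1}\|(\int_0^T g(t,\theta)^2 dN_t^\alpha)^{1/2}\|_p$, and expanding $dN^\alpha = d\tilde N^\alpha + \lambda^\alpha(\cdot,\theta^*)dt$ inside, then using \textbf{[A2]}(i)--(ii) to bound $\sup_t\|\sup_\theta|g(t,\theta)|^{2}\lambda^\alpha(t,\theta^*)\|_{p/2}$ and $\sup_t\|\sup_\theta|g(t,\theta)|^2\|_p$ uniformly, gives an $O(T^{-1/2})$ bound; the mark martingale is handled identically using \textbf{[A2]}(iii). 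The genuine difficulty is the \emph{uniformity in $\theta$}: one cannot simply take the sup inside the stochastic integral. The standard remedy — and the step I expect to be the main obstacle — is a Sobolev-embedding / Kolmogorov-continuity argument: control $\sup_{\theta}|R_T(\theta)|$ by bounding $\sup_\theta\|R_T(\theta)\|_p$ and $\sup_\theta\|\partial_\theta R_T(\theta)\|_p$ (for $p>n=\dim\Theta$) and invoking the Sobolev embedding $W^{1,p}(\Theta)\hookrightarrow C(\overline\Theta)$, which is exactly why the paper assumes $\Theta$ is regular enough and $f\in C^4$ (so that differentiating the integrand in $\theta$ stays within the scope of \textbf{[A2]}'s derivative bounds, up to order $3$). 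Differentiating $R_T(\theta)$ in $\theta$ produces integrands involving $\partial_\theta g(t,\theta)$, e.g. $\partial_\theta\textnormal{log}\lambda^\alpha = \partial_\theta\lambda^\alpha/\lambda^\alpha$ and $\partial_\theta\textnormal{log}q^\alpha$, all still bounded in the relevant $\mathbb{L}^p$ sense by \textbf{[A2]}; applying the same BDG estimate to $\partial_\theta R_T$ then closes the argument and yields $T^\gamma\|\sup_\theta|R_T(\theta)|\|_p \to 0$. Assembling the four pieces (two compensator limits at rate $T^\gamma$, two martingale remainders at rate $T^{-1/2} = o(T^{-\gamma})$) gives the claimed convergence for $\mathbb{Y}_T = \mathbb{Y}_T^{(1)} + \mathbb{Y}_T^{(2)}$.
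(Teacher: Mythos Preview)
Your proposal is correct and follows essentially the same route as the paper: decompose $\mathbb{Y}_T=\mathbb{Y}_T^{(1)}+\mathbb{Y}_T^{(2)}$, split each into compensator plus martingale, invoke \textbf{[A3]} for the compensator and BDG together with the Sobolev embedding $W^{1,p}(\Theta)\hookrightarrow C(\overline\Theta)$ for the martingale remainder (the paper packages the latter as its Lemma~\ref{lemBDGmartingale}, and cites \cite{clinet2017statistical} for the $\mathbb{Y}_T^{(1)}$ part rather than redoing it). One point you glossed over: \textbf{[A3]} only gives $\sup_{\theta}\|\cdot\|_p\to 0$, not $\|\sup_\theta|\cdot|\|_p\to 0$, so the compensator pieces \emph{also} need a Sobolev step (the paper's Lemma~\ref{lemAppliSobolev}), which in turn requires convergence of the $\theta$-derivative of the compensator --- this is exactly why \textbf{[A3]} includes the cases $k=1$ and why $\phi$ is allowed to depend on $\partial_\theta\lambda^\alpha$, so the fix is immediate with the machinery you already invoked.
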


Before we move to the LAN property, it is necessary to assume a standard identifiability condition on the limit field $\mathbb{Y}$ obtained in the previous lemma.
\bd
\im[[A4\!\!]]
We have $\textnormal{inf}_{\theta \in \Theta - \{\theta^*\}} -\frac{\mathbb{Y}(\theta)}{|\theta - \theta^*|^2} > 0.$
\ed

\begin{remark*} \label{rmkGamma}
Note that the limit field $\mathbb{Y}$ is automatically of class $C^2$ on $\Theta$ by the uniform convergences coming from $\textnormal{\textbf{[A3]}}$ and Lemma \ref{lemAppliSobolev}. Moreover, writing $\Gamma = - \partial_\theta^2 \mathbb{Y}(\theta^*)$ the asymptotic Fisher information matrix, note that \textnormal{\textbf{[A4]}} is equivalent to 
$$\textnormal{sup}_{\theta \in \Theta - \{\theta^*\}} \mathbb{Y}(\theta) <0 \textnormal{  and  } \Gamma >0.$$
\end{remark*}

\begin{lemma*} \label{lemGamma}
Assume \textnormal{\textbf{[A1]-[A4]}}. Let $\Gamma \in \reels^{n \times n}$ be the positive matrix defined in Remark \ref{rmkGamma}. Then, for any random ball $B_T \subset \Theta$ such that $\textnormal{diam} (B_T) \to^\proba 0$, we have as $T \to +\infty$
$$ \sup_{\theta \in B_T} |\Gamma_T(\theta) - \Gamma| \to^\proba 0.$$
Moreover, for any $p \geq 1$, we have 
$$ \sup_{T \in \reels_+} T^\gamma \l\| \Gamma_T(\theta^*) - \Gamma \r\|_{p} < +\infty.$$
\end{lemma*}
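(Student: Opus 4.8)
The plan is to treat the two assertions separately, both relying on the explicit decomposition of $\Gamma_T(\theta) = \Gamma_T^{(1)}(\theta) + \Gamma_T^{(2)}(\theta)$ into a sum of stochastic-integral (martingale) terms against $\tilde N^\alpha$ and $\tilde M^\alpha$, and compensator terms of the form $T^{-1}\int_0^T(\cdots)\,ds$. For the second assertion, I would evaluate at $\theta = \theta^*$ where the formulas displayed above already split $\Gamma_T^{(i)}(\theta^*)$ into precisely such pieces. The compensator parts are averages of the form $T^{-1}\int_0^T \phi(\lambda^\alpha(s,\theta^*),\lambda^\alpha(s,\theta^*),\partial_\theta\lambda^\alpha(s,\theta^*))\,ds$ and $T^{-1}\int_0^T\int_{\mathbb X}\partial_\theta^2\log q^\alpha(s,x,\theta^*)q^\alpha(s,x,\theta^*)\rho(dx)\lambda^\alpha(s,\theta^*)\,ds$; the relevant integrands lie in $D_\uparrow(E,\reels)$ (using \textbf{[A2]}(i)--(ii) to control the negative powers of $\lambda^\alpha$ and the indicator), so \textbf{[A3]} gives convergence in $\mathbb L^p$ at rate $T^\gamma$ to a limit which, by Remark \ref{rmkGamma} and Lemma \ref{lemY} (differentiating the uniform limit $\mathbb Y$ twice, justified via Lemma \ref{lemAppliSobolev}), must be $\Gamma$. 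For the martingale terms $-T^{-1}\int_0^T\partial_\theta^2\log\lambda^\alpha(t,\theta^*)\,d\tilde N_t^\alpha$ and $-T^{-1}\int_{[0,T]\times\mathbb X}\partial_\theta^2\log q^\alpha(t,x,\theta^*)\tilde M^\alpha(dt,dx)$, I would apply the Burkholder--Davis--Gundy inequality: their predictable quadratic variations are again time-averages covered by \textbf{[A2]} and hence $O_{\mathbb L^p}(T)$, so the martingale is $O_{\mathbb L^p}(T^{1/2})$ and the normalized version is $O_{\mathbb L^p}(T^{-1/2})$; since $\gamma < 1/2$, this is $o_{\mathbb L^p}(T^{-\gamma})$, uniformly in $T$, giving $\sup_T T^\gamma\|\Gamma_T(\theta^*)-\Gamma\|_p < \infty$.

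For the first assertion, fix a random ball $B_T$ with $\mathrm{diam}(B_T)\to^\proba 0$. I would first show $\sup_{\theta\in B_T}|\Gamma_T(\theta) - \Gamma_T(\theta^*)| \to^\proba 0$, and then combine with $\Gamma_T(\theta^*)\to^\proba\Gamma$ (which follows from the second assertion). The point is that $B_T$ eventually lies in a shrinking neighborhood of $\theta^*$ only if $\theta^*$ is its limiting center; more robustly, since $\mathrm{diam}(B_T)\to^\proba 0$, it suffices to prove uniform-in-$\theta$ convergence $\sup_{\theta\in\Theta}|\Gamma_T(\theta) - \Gamma(\theta)|\to^\proba 0$ where $\Gamma(\theta) = -\partial_\theta^2\mathbb Y(\theta)$, together with continuity of $\Gamma(\cdot)$ (from Remark \ref{rmkGamma}), so that $\sup_{\theta\in B_T}|\Gamma_T(\theta)-\Gamma|\le \sup_{\theta\in\Theta}|\Gamma_T(\theta)-\Gamma(\theta)| + \sup_{\theta\in B_T}|\Gamma(\theta)-\Gamma| \to^\proba 0$, the last term vanishing because $B_T$ shrinks and its center converges (in probability) to $\theta^*$ — which is forced by the consistency already implicit in \textbf{[A4]} and Lemma \ref{lemY}. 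Wait: to avoid circularity I would instead just note that $\Gamma_T(\theta)$ itself, for general $\theta$, decomposes as a martingale part plus a compensator part exactly as above but now with the integrands evaluated at a running $\theta$; applying \textbf{[A3]} together with the Sobolev embedding argument of Lemma \ref{lemAppliSobolev} (as in the proof of Lemma \ref{lemY}) upgrades the pointwise $\mathbb L^p$ convergence of the compensator part to uniform-in-$\theta$ convergence in probability of $\Gamma_T(\cdot)$ on $\overline\Theta$ to $\Gamma(\cdot) = -\partial_\theta^2\mathbb Y(\cdot)$, while the martingale part, again by BDG and \textbf{[A2]}, is $O_{\mathbb L^p}(T^{-1/2})$ uniformly (the $\theta$-supremum being absorbed by \textbf{[A2]}'s $\sup_\theta$-norms). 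Uniform convergence on $\overline\Theta$ plus $\mathrm{diam}(B_T)\to^\proba 0$ then yields $\sup_{\theta\in B_T}|\Gamma_T(\theta)-\Gamma_T(\hat\theta)|\to^\proba 0$ for any center $\hat\theta$ of $B_T$, and combined with the identification $\Gamma(\theta^*) = \Gamma$ one concludes — but here I should be careful: the statement claims convergence to the \emph{fixed} matrix $\Gamma$, so one genuinely needs the center of $B_T$ to approach $\theta^*$; if the lemma is applied later only to balls that do contain $\theta^*$ (or whose centers converge to it), this is automatic, and otherwise the cleanest reading is that $B_T$ is understood to localize around $\theta^*$.

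The main obstacle I anticipate is the uniform-in-$\theta$ control: turning the pointwise (in $\theta$) $\mathbb L^p$-convergences of \textbf{[A3]} into a convergence that is uniform over $\Theta$ for the \emph{second} derivative $\Gamma_T(\theta) = -T^{-1}\partial_\theta^2 l_T(\theta)$. This is exactly the role of the Sobolev embedding hypothesis on $\Theta$ (footnoted after the definition of $\Theta$) and of Lemma \ref{lemAppliSobolev}: one bounds a $C^0$-norm on $\Theta$ by a $W^{m,p}$-norm for $m$ large enough, so it suffices to have $\mathbb L^p$-control of $\Gamma_T(\theta) - \Gamma(\theta)$ and of its $\theta$-derivatives up to order $m$; the latter require the $C^4$ smoothness in \textbf{[A1]}(ii) and the fact that \textbf{[A3]} applies to \emph{any} $\phi\in D_\uparrow(E,\reels)$, in particular to the $\phi$'s obtained after differentiating the integrands defining $\Gamma_T$ once or twice more in $\theta$. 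Checking that these differentiated integrands still belong to $D_\uparrow(E,\reels)$ — i.e. retain the polynomial-growth and vanishing-at-$0$ structure — and that the exchange of $\partial_\theta$ with the time integral and with $\int_{\mathbb X}\rho(dx)$ is legitimate (dominated convergence via \textbf{[A2]}), is the routine but delicate part; once that is in place, the martingale terms are handled uniformly by a single BDG estimate and the result follows.
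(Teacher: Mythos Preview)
Your treatment of the second assertion is correct and matches the paper's: decompose $\Gamma_T(\theta^*)$ into a compensator part handled by \textbf{[A3]} (rate $T^\gamma$) and a martingale part handled by BDG (order $T^{-1/2}$ in $\mathbb{L}^p$, hence $o(T^{-\gamma})$).

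For the first assertion, your initial instinct --- reduce to $\sup_{\theta\in B_T}|\Gamma_T(\theta)-\Gamma_T(\theta^*)|\to^\proba 0$ --- was in fact the right one; it is essentially what the paper does. The alternative you switch to (proving full uniform convergence $\sup_{\theta\in\Theta}|\Gamma_T(\theta)-\Gamma(\theta)|\to 0$ via \textbf{[A3]} $+$ Sobolev) does \emph{not} go through under the stated assumptions, and this is a genuine gap rather than a routine verification. The point is that \textbf{[A3]} only supplies convergence for integrands of the form $\phi(\lambda^\alpha(s,\theta^*),\lambda^\alpha(s,\theta),\partial_\theta\lambda^\alpha(s,\theta))$ and for $\partial_\theta^k\log q^\alpha$ with $k\le 2$. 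But the compensator part of $\Gamma_T^{(1)}(\theta)$ at a generic $\theta$ already contains $\partial_\theta^2\lambda^\alpha(s,\theta)$, which is not among the arguments of $\phi$; and the Sobolev upgrade via Lemma~\ref{lemAppliSobolev} would further require $\mathbb{L}^p$-convergence of $\partial_\theta\Gamma_T(\theta)$, bringing in $\partial_\theta^3\lambda^\alpha$ and $\partial_\theta^3\log q^\alpha$ (the latter is $k=3$). None of these convergences is furnished by \textbf{[A3]}.

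The paper bypasses this by anchoring the compensator at $\theta^*$. For the $(2)$-part it writes
\[
\Gamma_T^{(2)}(\theta) \;=\; \tilde{\Gamma}_T^{(2)} + M_T^{(2)}(\theta) + R_T^{(2)}(\theta),
\]
where $\tilde{\Gamma}_T^{(2)}$ is the $\theta$-independent compensator evaluated at $\theta^*$ (covered by \textbf{[A3]} with $k=2$), $M_T^{(2)}(\theta)$ is the martingale against $\tilde M^\alpha$ (uniform in $\theta$ by Lemma~\ref{lemBDGmartingale} $+$ \textbf{[A2]}), and $R_T^{(2)}(\theta)$ is the difference of compensators at $\theta$ and $\theta^*$, so that $R_T^{(2)}(\theta^*)=0$. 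A single mean-value step then gives
\[
\sup_{\theta\in B_T}|R_T^{(2)}(\theta)| \;\le\; \mathrm{diam}(B_T)\cdot\sup_{\theta\in\Theta}|\partial_\theta R_T^{(2)}(\theta)|,
\]
and the second factor is bounded in $\mathbb{L}^p$ purely by the moment bound in \textbf{[A2]} on $\partial_\theta^3\log q^\alpha$ --- no ergodic limit is needed for this piece. The $\Gamma^{(1)}$ part is handled analogously (the paper defers to \cite{clinet2017statistical}). Your concern about whether $B_T$ must localize around $\theta^*$ is legitimate: the mean-value step uses $|\theta-\theta^*|\le\mathrm{diam}(B_T)$, so the lemma tacitly assumes $\theta^*\in B_T$, which is precisely how it is invoked in the proof of Theorem~\ref{thmLAN}.
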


\begin{remark*}
It is immediate to see from (\ref{gammaRepresentation}) that $\Gamma$ admits the representation
\bea \label{fisherRepresentation}
\Gamma = \proba-\lim_{T \to +\infty} \frac{1}{T} \sum_{\alpha=1}^d \int_{[0,T] \times \mathbb{X}} \frac{\partial_\theta f^\alpha(t,x, \theta^*)^{\otimes 2}}{f^\alpha(t,x,\theta^*)} \mathbb{1}_{\{f^\alpha(t,x,\theta^*) \neq 0\}}  dt \rho(dx)  
\eea
where we recall that for $x \in \reels^n$, $x^{\otimes 2} = x\cdot x^T \in \reels^{n \times n}$.
\end{remark*}
The next lemma gives the asymptotic distribution of the scaled score process $(\Delta_{uT})_{u \in [0,1]
}$.
\begin{lemma*} \label{lemDelta}
We have the convergence in distribution for the Skorokhod topology $\mathbb{D}([0,1])$, as $T \to +\infty$
    $$ \l(\sqrt{u}\Delta_{uT}\r)_{u \in [0,1]} \to^d \Gamma^{1/2} (W_{u})_{u \in [0,1]},$$
    where $W$ is a standard Brownian motion on $[0,1]$. Moreover, we have for any $p \geq 1$
$$ \sup_{T \in \reels_+} \| \Delta_T \|_p < +\infty. $$
\end{lemma*}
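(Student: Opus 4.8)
The plan is to exploit the martingale structure of the rescaled score. For each fixed $T>0$, the process $(\sqrt{u}\,\Delta_{uT})_{u \in [0,1]}$ is a locally square-integrable, purely discontinuous martingale for the time-changed filtration $(\calf_{uT})_{u \in [0,1]}$: writing $M_t = \partial_\theta l_t(\theta^*)$, one has $\sqrt{u}\,\Delta_{uT} = T^{-1/2} M_{uT}$, and using $\partial_\theta \log f^\alpha = \partial_\theta \log \lambda^\alpha + \partial_\theta \log q^\alpha$ together with the identity $\int_{\mathbb{X}} q^\alpha(t,x,\theta^*)\, \partial_\theta \log q^\alpha(t,x,\theta^*)\,\rho(dx) = \partial_\theta \int_{\mathbb{X}} q^\alpha(t,x,\theta^*)\,\rho(dx) = 0$ (valid as soon as $\lambda^\alpha(t,\theta^*) \neq 0$, by \textbf{[A1]}(iii)), one rewrites $\Delta_T = \Delta_T^{(1)} + \Delta_T^{(2)}$ as the single stochastic integral
\beas
\Delta_T = T^{-1/2} \sum_{\alpha=1}^d \int_{[0,T] \times \mathbb{X}} \partial_\theta \log f^\alpha(t,x,\theta^*)\, \mathbb{1}_{\{f^\alpha(t,x,\theta^*) \neq 0\}}\, \tilde{M}^\alpha(dt,dx);
\eeas
the same identity shows that the predictable covariation between $\Delta^{(1)}$ and $\Delta^{(2)}$ vanishes. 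I would then apply a functional martingale central limit theorem (\cite{JacodLimit2003}, Chapter VIII), whose two inputs are the convergence of the predictable quadratic variation to a deterministic continuous limit and the asymptotic negligibility of the jumps.

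For the first input, the predictable quadratic variation of $(\sqrt{u}\,\Delta_{uT})_u$ equals $T^{-1}\langle M\rangle_{uT}$, where $\langle M\rangle_t = \sum_\alpha \int_{[0,t] \times \mathbb{X}} \partial_\theta \log f^\alpha(s,x,\theta^*)^{\otimes 2}\, f^\alpha(s,x,\theta^*)\, ds\, \rho(dx)$. Expanding the square (the cross term integrating to zero over $\mathbb{X}$ as above) and using the elementary identity $-\int_{\mathbb{X}} \partial_\theta^2 \log q^\alpha(t,x,\theta^*)\, q^\alpha(t,x,\theta^*)\,\rho(dx) = \int_{\mathbb{X}} \partial_\theta \log q^\alpha(t,x,\theta^*)^{\otimes 2}\, q^\alpha(t,x,\theta^*)\,\rho(dx)$ (obtained by differentiating $\int_{\mathbb{X}} q^\alpha\,\rho(dx) = 1$ twice), one checks that $(uT)^{-1}\langle M\rangle_{uT} = \Gamma_{uT}(\theta^*) + \bop{(uT)^{-1/2}}$, the remainder being the martingale $(uT)^{-1}\sum_\alpha \int_{[0,uT] \times \mathbb{X}} \partial_\theta^2 \log f^\alpha(t,x,\theta^*)\,\tilde{M}^\alpha(dt,dx)$, controlled via \textbf{[A2]} and the Burkholder--Davis--Gundy inequality. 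Since $T^{-1}\langle M\rangle_{uT} = u\,(uT)^{-1}\langle M\rangle_{uT}$, Lemma \ref{lemGamma} (applied with horizon $uT$) yields $T^{-1}\langle M\rangle_{uT} \to^\proba u\,\Gamma$ for every $u \in [0,1]$; because the limit $u \mapsto u\,\Gamma$ is continuous, this upgrades to locally uniform convergence in $u$. (Alternatively, one could reprove this bracket convergence directly from \textbf{[A3]} by choosing the relevant $\phi \in D_{\uparrow}(E,\reels)$ componentwise.)

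For the second input, the jumps of $u \mapsto \sqrt{u}\,\Delta_{uT}$ are the $T^{-1/2} \partial_\theta \log f^\alpha(T_i^\alpha, X_i^\alpha, \theta^*)$, and $\sup_{i : T_i^\alpha \leq T} |\partial_\theta \log f^\alpha(T_i^\alpha, X_i^\alpha, \theta^*)|^p \leq \int_{[0,T] \times \mathbb{X}} |\partial_\theta \log f^\alpha(t,x,\theta^*)|^p\, \overline{N}^\alpha(dt,dx)$, so that
\beas
\esp\Bigl[ \Bigl( T^{-1/2} \sup_{i : T_i^\alpha \leq T} |\partial_\theta \log f^\alpha(T_i^\alpha, X_i^\alpha, \theta^*)| \Bigr)^p \Bigr] \leq T^{-p/2}\, \esp\Bigl[ \int_{[0,T] \times \mathbb{X}} |\partial_\theta \log f^\alpha(t,x,\theta^*)|^p\, f^\alpha(t,x,\theta^*)\, dt\, \rho(dx) \Bigr] = O\bigl(T^{1-p/2}\bigr),
\eeas
where the last step uses $\sup_t \esp\bigl[ \int_{\mathbb{X}} |\partial_\theta \log f^\alpha(t,x,\theta^*)|^p\, f^\alpha(t,x,\theta^*)\,\rho(dx) \bigr] < +\infty$, a consequence of \textbf{[A2]}(i)--(iv), the decomposition of $\partial_\theta \log f^\alpha$, and H\"older's inequality. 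For $p>2$ the right-hand side tends to $0$, so the maximal jump vanishes in $\mathbb{L}^p$, a fortiori in probability, which together with the boundedness of the bracket gives the Lindeberg-type condition. The multivariate functional martingale CLT then yields the announced convergence $(\sqrt{u}\,\Delta_{uT})_{u \in [0,1]} \to^d (\Gamma^{1/2} W_u)_{u \in [0,1]}$ in $\mathbb{D}([0,1])$.

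Finally, for the uniform moment bound, it suffices to treat $p \geq 2$ (the case $p < 2$ following from $\| \cdot \|_p \leq \| \cdot \|_2$). Applying a Rosenthal/Burkholder--Davis--Gundy-type inequality for stochastic integrals against the compensated integer-valued random measures $\tilde{M}^\alpha$ to the representation of $\Delta_T$ above, one gets $\| \Delta_T \|_p \leq C_p \bigl( \| \langle \Delta_T \rangle_T^{1/2} \|_p + T^{-1/2} \sum_\alpha \esp[ \int_{[0,T] \times \mathbb{X}} |\partial_\theta \log f^\alpha(t,x,\theta^*)|^p\, f^\alpha(t,x,\theta^*)\, dt\, \rho(dx) ]^{1/p} \bigr)$. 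The second term is $O(T^{-1/2+1/p})$ by the estimate of the previous paragraph, hence bounded uniformly in $T$; the first term is bounded uniformly in $T$ because $\langle \Delta_T \rangle_T$ is a time average of integrands whose $\mathbb{L}^{p/2}$-norms are bounded uniformly in $t$ (Minkowski's inequality together with \textbf{[A2]}). The main obstacle, as I see it, is precisely this last step together with the jump-negligibility verification: one must pass from the per-time-slice moment conditions \textbf{[A2]} to the global-in-time integrability of $|\partial_\theta \log f^\alpha|^p$ against the compensator and feed it correctly into the moment inequalities for purely discontinuous martingales. Once this is done, the convergence in law and the uniform moment bound follow by standard arguments.
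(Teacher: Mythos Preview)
Your proposal is correct and follows essentially the same route as the paper: both write $\sqrt{u}\,\Delta_{uT}$ as the single stochastic integral $T^{-1/2}\sum_\alpha\int_{[0,uT]\times\mathbb{X}}\partial_\theta\log f^\alpha\,\tilde M^\alpha(dt,dx)$, invoke the functional martingale CLT of \cite{JacodLimit2003} (Corollary~VIII.3.24), verify the bracket convergence $\langle M^T,M^T\rangle_u\to^\proba u\Gamma$, check a Lindeberg-type condition, and obtain the moment bound by BDG with \textbf{[A2]}. The only tactical differences are that the paper derives the bracket convergence directly from \textbf{[A3]} and the representation~(\ref{fisherRepresentation}) (your parenthetical alternative) rather than routing through Lemma~\ref{lemGamma}, and it verifies the Lindeberg integral $L_T$ by the Markov-type bound $\esp|L_T|\leq (T^{3/2}\epsilon)^{-1}\esp\sum_\alpha\int|\partial_\theta f^\alpha/f^\alpha|^3 f^\alpha\,dt\,\rho(dx)=O(T^{-1/2})$ instead of controlling the maximal jump.
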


We are now ready to state the local asymptotic normality of the likelihood field along with two deviations inequalities. First, for $u \in U_T = \{ u \in \reels^n | \theta^* + \frac{u}{\sqrt T} \in \Theta\}$, we take 
\bea 
\mathbb{Z}_T(u)= \textnormal{exp}\l[l_T\l( \theta^* + \frac{u}{\sqrt T}\r) - l_T(\theta^*)\r]
\eea 
and we extend the domain of $\mathbb{Z}_T$ to $\reels^n$ by taking $\mathbb{Z}_T$ continuously tending to $0$ as $|u| \to + \infty$ outside $U_T$. The next lemma shows that $\mathbb{Z}_T$ converges in distribution to the limit field
\bea 
\mathbb{Z}(u) = \textnormal{exp}\l( u^T\Delta  - u^T \Gamma u\r)
\eea 
where $\Delta \sim \caln(0, \Gamma)$.
\begin{theorem*} \label{thmLAN}
Assume \textnormal{\textbf{[A1]-[A4]}}. The following holds:
\bd
     \im[(i)] (Polynomial type large deviation inequality) For any $L >0$, there exists $C_L \geq 0$ such that $\sup_{r >0} \sup_{T >0}\proba \l[\sup_{u \in U_T, |u|>r} \mathbb{Z}_T(u) \geq e^{-r}\r] \leq \frac{C_L}{r^L}$.
    \im[(ii)] (LAN) $\mathbb{Z}_T \to^d \mathbb{Z}$ where the convergence happens in the space of continuous functions decreasing to $0$ as $|u| \to + \infty$ endowed with the uniform topology. 
    \im[(iii)] (Inverse moment condition) There exists $\delta >0$ such that $\sup_{T \in \reels_+} \esp \l[ \l( \int_{u| |u| \leq \delta} \mathbb{Z}_T(u)du\r)^{-1} \r] < +\infty$.
\ed
\end{theorem*}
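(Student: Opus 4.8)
The plan is to recast the normalized field $\mathbb{Z}_T$ into the polynomial-type large deviation framework of \cite{YoshidaPolynomial2011} (in the point-process adaptation of \cite{clinet2017statistical}), for which (i)--(iii) are precisely the output once the structural hypotheses are checked. Setting $\theta_u = \theta^* + u/\sqrt{T}$, on $U_T$ we have the identity $\textnormal{log}\,\mathbb{Z}_T(u) = T\,\mathbb{Y}_T(\theta_u)$ and, by \textbf{[A1](ii)} (the $C^4$ regularity), the third-order expansion
\beas
\textnormal{log}\,\mathbb{Z}_T(u) = u^\top \Delta_T - \tfrac{1}{2}\, u^\top \Gamma_T(\theta^*)\, u + \tfrac{1}{6}\,T^{-3/2}\,\partial_\theta^3 l_T(\bar\theta_u)[u^{\otimes 3}], \qquad \bar\theta_u \textnormal{ between } \theta^* \textnormal{ and } \theta_u.
\eeas
The ingredients required by the abstract machinery are then in hand: the $\mathbb{L}^p$-boundedness and asymptotic normality of $\Delta_T$ come from Lemma \ref{lemDelta}; the convergence $\Gamma_T(\theta^*)\to\Gamma$ at rate $T^\gamma$ in every $\mathbb{L}^p$ together with the local uniform convergence of $\Gamma_T$ come from Lemma \ref{lemGamma}; the uniform convergence $\mathbb{Y}_T\to\mathbb{Y}$ at rate $T^\gamma$ in every $\mathbb{L}^p$ is Lemma \ref{lemY}; the non-degeneracy of the key limit field ($\textnormal{sup}_{\theta\neq\theta^*}\mathbb{Y}(\theta)<0$ and $\Gamma>0$) is \textbf{[A4]} via Remark \ref{rmkGamma}; and the $\mathbb{L}^p$-bounds on $\textnormal{sup}_{\theta\in\Theta}|T^{-1}\partial_\theta^i l_T|$, $i\leq 3$ — needed to control the cubic remainder and to run the Sobolev argument in the $u$-variable — follow from \textbf{[A2]}, \textbf{[A3]} and the Sobolev embedding (Lemma \ref{lemAppliSobolev}).

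\textbf{Part (i).} I would split $U_T\cap\{|u|>r\}$ into a bounded zone $\{r<|u|\leq A\}$, with $A$ a large fixed constant, and an unbounded zone $\{|u|>A\}$. On the bounded zone, $r<A$ is forced, so $\proba[\cdots]\leq C_L r^{-L}$ holds trivially once $C_L$ is taken large; the quantitative content, for the intermediate range of $r$, is that the expansion above together with $\Gamma>0$, the moment bound on $\Delta_T$, and the negligibility in $\mathbb{L}^p$ of $T^{-1/2}\textnormal{sup}_\Theta|\partial_\theta^3 l_T|$ and of $|\Gamma_T(\theta^*)-\Gamma|$ give $\mathbb{Z}_T(u)\leq \textnormal{exp}(|u|\,|\Delta_T|-c|u|^2)$ on a high-probability event, whence the polynomial bound by Markov's inequality applied to the $\mathbb{L}^p$-bounded $|\Delta_T|$. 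On $\{|u|>A\}$ one writes $\textnormal{log}\,\mathbb{Z}_T(u)=T\mathbb{Y}(\theta_u)+T(\mathbb{Y}_T-\mathbb{Y})(\theta_u)\leq -c|u|^2+T\,\textnormal{sup}_\Theta|\mathbb{Y}_T-\mathbb{Y}|$ by \textbf{[A4]}, and must bound the probability that the fluctuation term overcomes the quadratic gain uniformly in $T$; this is exactly the point where the pointwise-in-$u$ $\mathbb{L}^p$ estimates have to be upgraded, via the Sobolev embedding over the $u$-domain, to a bound on $\esp[\textnormal{sup}_{|u|>r}\mathbb{Z}_T(u)^\beta]$ for a small exponent $\beta$, as in \cite{YoshidaPolynomial2011} and \cite{clinet2017statistical}. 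This last step is the technical heart of the argument and the one I expect to be the main obstacle, precisely because it requires trading the rate $T^\gamma$ against the radius $|u|$ in a way compatible with $\gamma<1/2$.

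\textbf{Parts (ii) and (iii).} Given (i), tightness of $(\mathbb{Z}_T)$ in the space of continuous functions vanishing at infinity is automatic, so (ii) reduces to finite-dimensional convergence $(\mathbb{Z}_T(u_1),\dots,\mathbb{Z}_T(u_k))\to^d(\mathbb{Z}(u_1),\dots,\mathbb{Z}(u_k))$, which follows from the expansion above together with $\Delta_T\to^d\Delta\sim\caln(0,\Gamma)$ (Lemma \ref{lemDelta} at $u=1$), $\Gamma_T(\theta^*)\to^\proba\Gamma$ (Lemma \ref{lemGamma}) and the uniform negligibility of the cubic remainder on compact $u$-sets. For (iii), on a small ball $\{|u|\leq\delta\}$ the same expansion gives the lower bound $\textnormal{log}\,\mathbb{Z}_T(u)\geq -|u|\,|\Delta_T|-C\delta^2 R_T$ with $R_T := |\Gamma_T(\theta^*)|+\textnormal{sup}_\Theta|T^{-1}\partial_\theta^3 l_T|$ bounded in every $\mathbb{L}^p$ uniformly in $T$; integrating over the symmetric ball and using Jensen's inequality for the linear term yields $\int_{|u|\leq\delta}\mathbb{Z}_T(u)\,du\geq c(\delta)\,\textnormal{exp}(-C\delta^2 R_T)$, so that, after the usual truncation on $\{R_T\leq K\}$ and the polynomial control of $\proba[R_T>K]$, one obtains $\textnormal{sup}_{T}\esp[(\int_{|u|\leq\delta}\mathbb{Z}_T(u)\,du)^{-1}]<+\infty$, again along the lines of the cited works.
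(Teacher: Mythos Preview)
Your proposal is essentially correct and follows the same overall strategy as the paper: both reduce the theorem to the polynomial-type large deviation machinery of \cite{YoshidaPolynomial2011}, with Lemmas \ref{lemY}, \ref{lemGamma}, \ref{lemDelta} and the $\mathbb{L}^p$ bound on $T^{-1}\sup_\Theta|\partial_\theta^3 l_T|$ supplying the required inputs. A few differences in execution are worth noting.

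For (i), the paper does not attempt the zone-splitting argument you sketch; it simply invokes Theorem~3(c) of \cite{YoshidaPolynomial2011} after specifying the exponents $\beta_1=\gamma$, $\beta_2=1/2-\gamma$, $\rho=2$, $\rho_2\in(0,2\gamma)$, etc., and checks the abstract conditions (A4$'$), (A4$''$), (A6), (B1), (B2) via the lemmas already in hand. Your sketch of how that black box works internally is fine as intuition, but the step you yourself flag as the ``main obstacle'' (trading $T^\gamma$ against $|u|$ on the unbounded zone) is precisely what Yoshida's theorem packages, and the paper wisely does not reopen it.

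For (ii), your claim that tightness is ``automatic'' from (i) is slightly too quick: (i) controls the tails in $u$, but equicontinuity on compacts must still be shown. The paper does this explicitly by bounding the modulus of continuity $w_T(\delta,r)$ in $\mathbb{L}^p$ via $|l_T(\theta_{u_2})-l_T(\theta_{u_1})|\leq |u_2-u_1|\,T^{-1/2}\sup_\Theta|\partial_\theta l_T|$ and the uniform moment bound on the derivative.

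For (iii), your direct lower-bound-plus-Jensen argument is a legitimate alternative, but the paper instead applies Lemma~2 of \cite{YoshidaPolynomial2011}, whose hypothesis is exactly the modulus-of-continuity estimate just established for (ii). This is more economical since it reuses a computation already made.
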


The main consequence of the previous theorem is the convergence of moments for the QMLE and the QBE.
\begin{theorem*} \label{thmQLA2}
Assume \textnormal{\textbf{[A1]-[A4]}}. We have, for any continuous function $u$ of polynomial growth, 
$$ \esp[u(\sqrt T (\hat{\theta}_T - \theta^*))] \to \esp[u(\Gamma^{-1/2} \xi)],$$
$$ \esp[u(\sqrt T (\tilde{\theta}_T - \theta^*))] \to \esp[u(\Gamma^{-1/2} \xi)]$$
where $\xi$ follows an $n$-dimensional standard normal distribution.
\end{theorem*}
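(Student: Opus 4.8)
The plan is to derive Theorem~\ref{thmQLA2} from the three conclusions of Theorem~\ref{thmLAN} via the abstract convergence-of-moments scheme of \cite{YoshidaPolynomial2011} (the same route as in \cite{clinet2017statistical}, Section~3). Write the rescaled estimators as functionals of the random field $\mathbb{Z}_T$: with $\hat u_T = \sqrt{T}(\hat\theta_T - \theta^*)$ and $\tilde u_T = \sqrt{T}(\tilde\theta_T - \theta^*)$, the change of variable $\theta = \theta^* + u/\sqrt T$ gives $\hat u_T \in \textnormal{argmax}_{u \in U_T} \mathbb{Z}_T(u)$ (whenever the QMLE exists, which, as we will see, happens with probability tending to $1$) and
\beas
\tilde u_T = \frac{\int_{U_T} u\, \mathbb{Z}_T(u)\, p(\theta^* + u/\sqrt{T})\, du}{\int_{U_T} \mathbb{Z}_T(u)\, p(\theta^* + u/\sqrt{T})\, du},
\eeas
so that the quantities to control are $\esp[\varphi(\hat u_T)]$ and $\esp[\varphi(\tilde u_T)]$ for $\varphi$ continuous of polynomial growth.

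First I would show that the families $(\hat u_T)_T$ and $(\tilde u_T)_T$ are bounded in $\mathbb{L}^q$ for every $q \geq 1$, which makes $(\varphi(\hat u_T))_T$ and $(\varphi(\tilde u_T))_T$ uniformly integrable. For the QMLE this is immediate from the polynomial-type large deviation inequality in Theorem~\ref{thmLAN}: on the event $\{|\hat u_T| > r\}$ one has $\sup_{|u|>r} \mathbb{Z}_T(u) \geq \mathbb{Z}_T(\hat u_T) \geq \mathbb{Z}_T(0) = 1 \geq e^{-r}$, hence $\proba[|\hat u_T| > r] \leq C_L r^{-L}$ for $L$ arbitrarily large, and integrating the tail yields moments of all orders (it also shows the argmax is attained in a bounded region with probability $\to 1$). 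For the QBE one bounds $|\tilde u_T| \leq (\int |u|\, \mathbb{Z}_T(u)\, p\, du)/(\int \mathbb{Z}_T(u)\, p\, du)$, controls the far part of the numerator by Theorem~\ref{thmLAN}, and bounds the denominator from below in $\mathbb{L}^m$ for every $m$ using the inverse moment condition of Theorem~\ref{thmLAN} together with $0 < \inf_\theta p(\theta) \leq \sup_\theta p(\theta) < +\infty$; a Hölder argument then gives $\sup_T \esp[|\tilde u_T|^q] < +\infty$.

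Next I would identify the limit. By the LAN statement of Theorem~\ref{thmLAN}, $\mathbb{Z}_T \to^d \mathbb{Z}$ in the space of continuous functions vanishing at infinity with the uniform topology, where $\mathbb{Z}$ is the limit field defined above with $\Gamma > 0$; since $\log \mathbb{Z}$ is a.s. strictly concave, $\mathbb{Z}$ admits an a.s. unique maximizer, which is Gaussian centered at $0$ with covariance $\Gamma^{-1}$, and an explicit Gaussian integration shows that the barycenter $\int u\, \mathbb{Z}(u)\, du/\int \mathbb{Z}(u)\, du$ equals that same maximizer. As the argmax functional and the barycenter functional are continuous at $\mathbb{Z}$ on the relevant tight subset, and as $p(\theta^* + u/\sqrt T) \to p(\theta^*)$ uniformly on compacts and cancels in the ratio, the continuous mapping theorem (as implemented in \cite{YoshidaPolynomial2011}) gives $\hat u_T \to^d \Gamma^{-1/2}\xi$ and $\tilde u_T \to^d \Gamma^{-1/2}\xi$ with $\xi$ an $n$-dimensional standard normal vector. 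Combining this weak convergence with the uniform integrability of the previous step yields $\esp[\varphi(\hat u_T)] \to \esp[\varphi(\Gamma^{-1/2}\xi)]$ and the analogous statement for $\tilde u_T$, which is the claim.

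The main obstacle is the uniform-in-$T$ $\mathbb{L}^q$ control of the QBE: one must simultaneously bound a random numerator and keep a random denominator away from $0$, with only the polynomial tail of $\mathbb{Z}_T$ and the inverse moment bound from Theorem~\ref{thmLAN} available. Making these fit together requires a careful three-region decomposition of the $u$-integral ($|u| \leq \delta$, $\delta < |u| \leq r$, $|u| > r$) together with a Hölder step decoupling numerator from denominator. Beyond this, everything reduces to the abstract scheme of \cite{YoshidaPolynomial2011}, whose hypotheses are supplied by \textbf{[A1]}--\textbf{[A4]} through Theorem~\ref{thmLAN}.
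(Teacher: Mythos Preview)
Your proposal is correct and follows essentially the same route as the paper: the paper's proof simply invokes Theorem~4(b) and Theorem~8 of \cite{YoshidaPolynomial2011}, fed with the three outputs of Theorem~\ref{thmLAN} (polynomial large deviation inequality, LAN, inverse moment condition), while you spell out the mechanism behind those abstract results. One minor imprecision: Theorem~\ref{thmLAN}(iii) only gives the $\mathbb{L}^1$ bound on $\bigl(\int_{|u|\le\delta}\mathbb{Z}_T(u)\,du\bigr)^{-1}$, not $\mathbb{L}^m$ for all $m$ as you write; Yoshida's Theorem~8 is engineered precisely so that this single inverse moment, combined with the polynomial tail, suffices for all $\mathbb{L}^q$ bounds on $\tilde u_T$ via the three-region/H\"older decomposition you mention, so the overall argument is unaffected.
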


\section{Generalized exponential marked Hawkes process} \label{sectionHawkes}

We introduce in this section a new class of marked Hawkes processes enjoying a Markovian representation, and we show that under reasonable stability conditions theses processes are ergodic and even exponentially mixing. The main result of this section is Theorem \ref{thmVgeom}, which is the main tool necessary for the application of QLA to the generalized exponential marked Hawkes process. In this section, we drop the dependence of all quantities in $\theta$ since no parametrization is necessary for our purpose. All processes can be thought of as taken at point $\theta^*$. We return to the problem of statistical inference in Section \ref{sectionQLAHawkes}.   

\subsection*{Model definition and first properties}

We consider the marked point process $\overline{N} = (\overline{N}^\alpha)_{\alpha = 1,...,d}$ of Section \ref{sectionQLA}. We let $T_1 < T_2 < ...$ be the sequence of jump times associated to the global counting process $\sum_{\alpha = 1}^d N^\alpha$. We also consider that all the covariates are affected by the same (possibly multidimensional) mark sequence $(X_i)_{i \geq 1}$. It will be convenient to associate to this sequence of marks a piecewise constant and right continuous process defined by $X_{t} = X_i$ for $T_i \leq t < T_{i+1}$. In this section and the next, we assume that $\mathbb{X}$ is the subset of a finite-dimensional normed space and such that the topology inherited from the associated norm $|\cdot|$ makes $\mathbb{X}$ locally compact and separable. $\calx$ is then naturally taken as the associated Borel $\sigma$-field. Typical examples are when the state-space is $\reels^q$, $\mathbb{Z}^q$ or any categorical finite set. We will say that $\overline{N}$ is a (multivariate) marked Hawkes process if for any $\alpha \in \{1,...,d\}$, the stochastic intensity of the associated counting process $N^\alpha$ has the form
\bea \label{generalIntensityhab}
\lambda^\alpha(t ) = \phi_\alpha\l( \l(\int_{[0,t) \times \mathbb{X}} h_{\alpha\beta}(t-s, x) \overline{N}_\beta(ds, dx)\r)_{\beta =1,...,d}, X_{t-}\r), 
\eea
where $\phi_\alpha$ is continuous and $h_{\alpha\beta}$ is measurable, $\phi_\alpha : \reels_+^{d} \times \mathbb{X} \to \reels_+$ is not necessarily linear in its first argument, and for $\alpha,\beta \in \{1,...,d\}$, $h_{\alpha\beta}: \reels_+ \times \mathbb{X} \to \reels_+$ corresponds to the excitation kernel impacting $\lambda^\alpha$ every time a jump on the covariate $N^\beta$ occurs. Note that we exclude self-inhibition mechanisms as $h_{\alpha\beta}$ is required to yield non-negative values only. When $\phi_\alpha(u,x) = \nu_\alpha(x)$ does not depend on $u$, $\lambda^\alpha$ is piecewise constant and purely driven by the mark process $X$. Most classes of pure-jump Markov processes are thus comprised in this representation. In particular, popular models in finance such as the zero-intelligence model of \cite{AbergelLOB2013} or the queue-reactive model of \cite{RosenbaumQueue2014} fall under the scope of this model. When $\phi_\alpha(u,x) = \nu_\alpha(x) + \sum_{\beta = 1}^d u_\beta$, the marked Hawkes process is said linear as studied in, e.g \cite{BremaudStability1996} in the unmarked case. Hereafter, we keep a general form for $\phi_\alpha$ (although our ergodicity results will come at the cost of sub-linearity), but we restrict ourselves to the case where the kernels admit the following multiplicative representation:
\bd
\im[[E\!\!]] For any $\alpha,\beta \in \{1,...,d\}$, we have 
\bea \label{defHgenExp}
h_{\alpha\beta}(s,x)= \langle A_{\alpha\beta} | e^{-s B_{\alpha\beta}} \rangle g_{\alpha\beta}(x),
\eea 
where $A_{\alpha\beta} \in \reels^{p \times p}, B_{\alpha\beta}  \in \reels^{p \times p}$, and $g_{\alpha\beta}(x) \in \reels_+$ for some $p \geq 1$, and where $\langle\cdot|\cdot\rangle$ denotes the canonical inner product on $\mathbb{R}^{p \times p}$.
\ed

In (\ref{defHgenExp}), the inner product corresponds to the temporal component of the kernel, and has a generalized exponential shape that we will characterize later on. Hereafter, $B_{\alpha\beta}$ is always assumed to have eigenvalues with positive real parts. We will extensively use this condition when proving the stability of the process. Although apparently simple, the above matrix-exponential representation yields more general functions than the pure exponential kernel. The function $g_{\alpha\beta}$ accounts for the impact of the marks on the excitation process. Such a multiplicative form, already used in \cite{clinet2019asymptotic} and \cite{richards2019score}, makes most calculations tractable while retaining the dual impact of time and marks on the excitation kernel. In \cite{richards2019score}, which fits the above model on financial limit order book data, linear and quadratic shapes have been proposed for the boost function $g_{\alpha\beta}$, with several mark processes ranging from trade volumes, price transformations, market imbalance, to transformations of the counting process $N$ itself (similarly to queue-reactive models). Details can be found in the aforementioned paper, Table 2, p. 28.\\

\smallskip 

We call generalized exponential kernel any function $h_{\alpha\beta}$ satisfying (\ref{defHgenExp}), and generalized exponential marked Hawkes process (GEMHP) a process whose excitation kernels admit all the representation (\ref{defHgenExp}). %, yielding the following representation for the stochastic intensities
%\bea \label{shapeGEMHPLambda}
%\lambda^\alpha(t ) = \phi_\alpha \l( \l(\int_{[0,t) \times \mathbb{X}} \langle A_{\alpha\beta} | e^{-(t-s) B_{\alpha\beta}} \rangle g_{\alpha\beta}(x)\overline{N}_\beta(ds,dx)\r)_{\beta =1,...,d},X_{t-}\r).
%\eea 
It turns out that the following fundamental example
\bea \label{cosexpRep} 
f_P(s) = P(s)e^{-rs}, 
\eea 
also called Erlang kernel, where $P$ is a polynomial function of the form $P(s) = \sum_{k=1}^p a_k s^k$, and $r>0$ is a fixed constant yields such a representation. Indeed defining $D = r \cali_{p \times p}$ where $\cali_{p \times p}$ is the indentity matrix having dimension $p$, $-M$ the nilpotent matrix such that $-M_{i,j} = \mathbb{1}_{\{j = i+1\}}$ for $1 \leq i,j \leq p$, we readily verify that
\bea \label{polynomExample} 
\textnormal{exp}(-t(D+M)) = e^{-rt}\l( \begin{matrix} 1 & t & \frac{t^2}{2} & \cdots & \frac{t^p}{p!} \\
                                        0  &\ddots & \ddots &\ddots &\vdots \\
                                        \vdots & \ddots &  \ddots & \ddots & \frac{t^2}{2} \\
                                        \vdots & 0& \ddots & \ddots & t \\
                                        0 & \cdots & \cdots & 0 & 1\end{matrix}\r)
\eea 
so that taking the matrix $A^P$ whose coefficients satisfy $A_{ij}^P = \mathbb{1}_{\{i =1\}} j!a_j$ for $1\leq i,j \leq p$ yields the representation
\beas 
f_P(s) = \langle A^P | e^{-s(D+M)}\rangle.
\eeas 
Moreover, the function 
\beas 
f_C(s) = (c\textnormal{cos}( \xi s) + d \textnormal{sin}(\xi s))e^{-rs}
\eeas 
where $c,d,\xi \in \reels$ can also be represented as before. Defining 
\beas
R = \l(\begin{matrix} 0 &\xi\\-\xi&0\end{matrix}\r),
\eeas 
we verify again that 
\bea \label{cosExample}
\textnormal{exp}(-t(D+R))= e^{-rt}  \l(\begin{matrix} \textnormal{cos}(\xi t) &-\textnormal{sin}(\xi t)\\\textnormal{sin}(\xi t)&\textnormal{cos}(\xi t)\end{matrix}\r),
\eea 
so that if we define the matrix $A^C$ such that $A_{11}^C = c$, $A_{12}^C = -d$, $A_{21}^C=A_{12}^C=0$, then again 
\beas 
f_T(s) = \langle A^T | e^{-s(D+R)}\rangle.
\eeas
More generally we have the following result, which is an easy consequence of the Jordan canonical form for real matrices.
\begin{proposition*}\label{propKernel}
Let $f : \reels_+ \to 
\reels_+$ be a function having the representation 
\beas 
f(s) = \langle A | e^{-sB}\rangle
\eeas 
where $B$ has eigenvalues with positive real parts. Then $f$ is a linear combination of functions of the form 
\bea \label{canonicalFormKernel}
u(s) = P(s) (1 + c\textnormal{cos}(\xi s) + d \textnormal{sin}(\xi s))e^{-rs}
\eea 
where $P(s) = \sum_{k=1}^p a_k s^k$, $\xi, d, c \geq 0$ and $r >0$. Conversely, there exist two matrices $A_u$ and $B_u$ such that $u = \langle A_u | e^{-\cdot B_u}\rangle$, and where $B_u$ has eigenvalues with positive real parts. 
\end{proposition*}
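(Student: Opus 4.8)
\emph{Proof sketch.} The plan is to reduce the whole statement to the real Jordan canonical form of $B$, using the two explicit computations (\ref{polynomExample}) and (\ref{cosExample}) carried out just above as the building blocks. For the direct implication, I would first write $\langle A\,|\,e^{-sB}\rangle = \mathrm{tr}(A^{T} e^{-sB})$ and pick a real invertible $Q$ with $B = QJQ^{-1}$, $J$ in real Jordan form; then $e^{-sB} = Q e^{-sJ} Q^{-1}$, and with $C := Q^{-1} A^{T} Q$ cyclicity of the trace gives $f(s) = \mathrm{tr}(C\, e^{-sJ})$. Since $J$ is block-diagonal, $e^{-sJ}$ is block-diagonal as well, so it suffices to analyze the trace of $C$ against a single real Jordan block. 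For a block attached to a real eigenvalue $\mu$ --- necessarily $\mu>0$, being real with positive real part --- one has $e^{-sJ_{\mathrm{block}}} = e^{-\mu s}\sum_{j\ge 0}\tfrac{(-s)^{j}}{j!}N^{j}$ with $N$ nilpotent, so every entry is a polynomial in $s$ times $e^{-\mu s}$. For a block attached to a conjugate pair $\mu\pm i\xi$ with $\mu>0$, the block is assembled from $2\times2$ rotation-scaling cells $\left(\begin{smallmatrix}\mu&\xi\\ -\xi&\mu\end{smallmatrix}\right)$ with identity cells on the super-block-diagonal, exactly the structure met in (\ref{cosExample}), and its exponential is $e^{-\mu s}$ times a block matrix whose entries are polynomials in $s$ times $\cos(\xi s)$ or $\sin(\xi s)$. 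Pairing with $C$ and summing over blocks then exhibits $f$ as a finite linear combination of monomials $s^{k}e^{-\mu s}$, $s^{k}e^{-\mu s}\cos(\xi s)$ and $s^{k}e^{-\mu s}\sin(\xi s)$ with $\mu>0$.

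It then remains to recast this combination in the stated form. I would group it by the finitely many pairs $(\mu,\xi)$ occurring, so that for a fixed pair the contribution reads $Q_{0}(s)e^{-\mu s}+Q_{1}(s)\cos(\xi s)e^{-\mu s}+Q_{2}(s)\sin(\xi s)e^{-\mu s}$ for polynomials $Q_{0},Q_{1},Q_{2}$, and rewrite it as
\[
\big(Q_{0}-Q_{1}-Q_{2}\big)(s)\,e^{-\mu s}\;+\;Q_{1}(s)\big(1+\cos(\xi s)\big)e^{-\mu s}\;+\;Q_{2}(s)\big(1+\sin(\xi s)\big)e^{-\mu s},
\]
a linear combination of terms of the form (\ref{canonicalFormKernel}) with $r=\mu>0$ and $(c,d)\in\{(0,0),(1,0),(0,1)\}$; replacing $\xi$ by $|\xi|$ and flipping the sign of the sine coefficient when needed yields $\xi\ge0$. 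This proves that $f$ has the claimed shape.

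For the converse, given $u(s)=P(s)(1+c\cos(\xi s)+d\sin(\xi s))e^{-rs}$ I would expand $u = P(s)e^{-rs}+cP(s)\cos(\xi s)e^{-rs}+dP(s)\sin(\xi s)e^{-rs}$ and realize each summand separately. For $P(s)e^{-rs}$ the construction of (\ref{polynomExample}) with $D=rI$ applies: the first row of $e^{-s(D+M)}$ is $e^{-rs}(1,s,\tfrac{s^{2}}{2!},\dots)$, so pairing with a matrix supported on the first row produces any polynomial multiple of $e^{-rs}$. For the two trigonometric summands I would use the $2\times2$-blocked analogue, namely the real Jordan block with diagonal cells $\left(\begin{smallmatrix}r&\xi\\ -\xi&r\end{smallmatrix}\right)$ and identity cells above, whose exponential --- combining (\ref{cosExample}) with the same nilpotent pattern --- is $e^{-rs}$ times a block matrix with entries $\tfrac{(\pm s)^{j}}{j!}\cos(\xi s)$ and $\tfrac{(\pm s)^{j}}{j!}\sin(\xi s)$, so that a suitable real matrix extracts the desired function. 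Taking $B_{u}$ block-diagonal built from these three blocks and $A_{u}$ the correspondingly placed matrix, bilinearity of $\langle\cdot\,|\,\cdot\rangle$ gives $\langle A_{u}\,|\,e^{-sB_{u}}\rangle = u(s)$, while the eigenvalues of $B_{u}$ are $r$ and $r\pm i\xi$, all of positive real part $r>0$.

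I expect no conceptual obstacle --- in particular the positivity of $f$ is never used --- so the difficulty is essentially bookkeeping. The step that needs care is the exponential of a real Jordan block with non-trivial nilpotent part attached to a complex conjugate pair: one must verify that the polynomial factors multiplying $\cos(\xi s)$ and $\sin(\xi s)$ come out correctly. The cleanest route is probably to pass to the complex Jordan form, where each block of $B$ is an ordinary scalar Jordan block, compute $e^{-sB}$ there, and take real and imaginary parts; the very same computation, read in reverse, supplies the blocks required in the converse. Matching the precise normalization of (\ref{canonicalFormKernel}) --- the constraints $c,d,\xi\ge0$ and the stated form of $P$ --- is then just the regrouping described above.
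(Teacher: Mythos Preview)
Your proposal is correct and follows essentially the same route as the paper: the forward direction is exactly the paper's argument via the real Jordan canonical form of $B$ together with the model computations (\ref{polynomExample}) and (\ref{cosExample}). For the converse, the paper also builds $B_u$ block-diagonally out of a ``polynomial'' block and a coupled ``cosine/sine'' block, but instead of assembling real Jordan blocks directly it defines a specific bidiagonal matrix $\Pi$ (see (\ref{defBu})--(\ref{defAu})) and verifies the representation by checking that the vector $V(t)=(t^k e^{-rt},\,t^k\cos(\xi t)e^{-rt},\,t^k\sin(\xi t)e^{-rt})_k$ solves the linear ODE $V'=-B_uV$, $V(0)=\mathbf b$, hence $V(t)=e^{-tB_u}\mathbf b$ and $u=\mathbf a^T V=\langle A_u\,|\,e^{-tB_u}\rangle$. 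This ODE verification is a slightly cleaner bookkeeping device than tracking the nilpotent expansion of a real Jordan block for a complex pair, but the two constructions are equivalent and yield the same eigenvalue set $\{r,\,r\pm i\xi\}$; your regrouping step to force $c,d,\xi\ge 0$ is a detail the paper leaves implicit.
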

The proof and the forms of the matrices $A_u$ and $B_u$ are relegated to the Appendix. Proposition \ref{canonicalFormKernel} and (\ref{polynomExample})-(\ref{cosExample}) show that the GEMHP encompasses the exponential kernel of \cite{OakesMarkovianHawkes1975}, but also polynomial-exponential kernels (see \cite{ditlevsen2005parameter,pasha2013hawkes,duarte2019stability} and \cite{ditlevsen2017multi}) among others. %Although the exponentially decreasing tails of the kernels may appear restrictive, it is worth noting that linear combinations of kernels of the shape (\ref{canonicalFormKernel}) are dense (e.g in  $\mathbb{L}_1$ sense) in the space of integrable kernels and therefore can, in theory, approximate any excitation function that a practitioner may have in mind. \\ 

\smallskip

In what follows, our aim is to take advantage of the matrix-exponential shape of the kernel and show that there exists a Markovian process that drives the dynamics of the related marked Hawkes process, provided that the mark process admits a transition kernel given by (\ref{defKernelY}) below. Building on the pure exponential Hawkes process case (see e.g \cite{clinet2017statistical}, p. 1819), we introduce the generalized elementary excitation process  $\cale = (\cale_{\alpha\beta})_{\alpha,\beta=1,...,d}$, defined as 
\bea \label{defExcitationElementaire}
\cale_{\alpha\beta}(t) = \int_{[0,t] \times \mathbb{X}} e^{-(t-s)B_{\alpha\beta}}g_{\alpha\beta}(x)\overline{N}_\beta(ds,dx)  \in \reels^{p \times p}, \alpha,\beta  \in \{1,...,d\},
\eea
for any $t \geq 0$, and write $\E = \reels^{p^2d^2}$ the state space of $\cale$. Naturally, there is no reason to believe that $\cale$ is Markovian when marks affect the individual excitation levels. Accordingly, we turn our attention to the extended process $Z = (\cale,X)$. Proving the Markovian structure of $Z$ requires specific conditions on the mark process that we now detail. First, we introduce $( \kappa_i)_{i \geq 0}$ the sequence of labels of the jumps of the global process $\sum_{\alpha=1}^d N^\alpha$, that is, for $i \geq 1$, $\kappa_i \in \{1,...,d\}$ is the unique (random) index such that $\Delta N_{T_i}^{\kappa_i} =1$, and $\kappa_0$ is a $\{1,...d\}$-valued $\calf_0$-measurable random variable. Now, we assume that there exists a family of Feller transition kernels $\l(\calq_\alpha\r)_{\alpha \in \{1,...,d\}}$ on $\mathbb{X} \times \calx $ such that for $A \in \calx$, $i \geq 1$
\bea \label{defKernelY}
\proba[X_i \in A | \kappa_i, \Delta T_i, \calf_{T_{i-1}}] = \calq_{\kappa_i}(X_{i-1}, A)
\eea 
where, for $i \geq 1$, $\Delta T_i = T_i - T_{i-1}$ with the convention $T_0 = 0$, and where the initial mark $X_0$ is an $\calf_0$-measurable random variable. Hereafter, for $z =(\epsilon,x) \in \E \times \mathbb{X}$, $\alpha \in \{1,...,d\}$, we define 
\bea 
\mu^\alpha(t,z) = \phi_\alpha\l( \l(\langle A_{\alpha\beta} | e^{- t B_{\alpha\beta}}\epsilon_{\alpha\beta} \rangle \r)_{\beta=1,...,d} ,x\r),
\eea 
and $\mu(t,z) = \sum_{\alpha=1}^d \mu^\alpha(t,z)$. Finally, we will use the notations $f^{\alpha}(t,z) = \mu^\alpha(t,z)e^{-\int_0^t \mu^\alpha(s,z)ds}$ and  $f(t,z) = \mu(t,z)e^{-\int_0^t \mu(s,z)ds}$. The next proposition shows that if the mark sequence follows (\ref{defKernelY}), then $Z$ is a Markov process.

\begin{proposition*} \label{propGen}
Assume \textnormal{\textbf{[E]}}. Let $i \geq 1$. We have the following results.
\begin{enumerate}
    \item Given $\calf_{T_{i-1}}$, the law of $\Delta T_i$ depends on $Z_{T_{i-1}}$ only and admits the conditional density $f(\cdot,Z_{T_{i-1}})$ with respect to Lebesgue measure.
    \item We have $\proba[\kappa_i = \alpha | \calf_{T_{i-1}}, \Delta T_i] = \frac{\mu^\alpha(\Delta T_i,Z_{T_{i-1}})}{\mu(\Delta T_i,Z_{T_{i-1}})}$ for $\alpha \in \{1,...,d\}$.
    \item Given $\calf_{T_{i-1}}$ and $\Delta T_i$, the conditional distribution of $X_i$ is given by the measure $$\calq(Z_{T_{i-1}}, \cdot) = \frac{1}{\mu(\Delta T_i,Z_{T_{i-1}})}\sum_{\alpha=1}^{d}\mu^\alpha( \Delta T_i,Z_{T_{i-1}})\calq_{\alpha}(X_{i-1}, \cdot).$$
\end{enumerate}
Consequently, $Z$ is a continuous-time Feller Markov process. Moreover, denoting by $\call$ its associated generator, we have for $z =(\epsilon,x) \in \E \times \mathbb{X}$, $f$ a function that is $C^1$ in its first argument and belonging to the domain of $\call$,
\beas  
\call f(z) = \sum_{\beta=1}^d \int_\mathbb{X} \{f((\epsilon_{\alpha\gamma} + g_{\alpha\gamma}(y)\mathbb{1}_{\{\gamma = \beta\}})_{\alpha,\gamma = 1,...,d}, y) - f(z)\} \psi_\beta(z) \calq_\beta(x,dy) -\sum_{\alpha,\beta=1}^d \frac{\partial f}{\partial \epsilon_{\alpha\beta}}(z). B_{\alpha\beta} \epsilon_{\alpha\beta}, 
\eeas
where $\psi_\alpha(z) = \mu^\alpha(0,z) = \phi_\alpha( (\langle A_{\alpha\beta} | \epsilon_{\alpha\beta} \rangle)_{\beta=1,...,d}, x) $.
\end{proposition*}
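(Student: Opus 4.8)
\textbf{Proof plan for Proposition \ref{propGen}.}

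The plan is to establish the three distributional claims by working with the explicit construction of the marked Hawkes process via its stochastic intensity, and then to deduce the Markov property and the form of the generator. First I would fix $i \geq 1$ and condition on $\calf_{T_{i-1}}$. The key observation is that, on the interval $[T_{i-1}, T_i)$, no jump occurs, so the mark process is frozen at $X_{i-1}$ and each elementary excitation process evolves deterministically: for $t \in [0, \Delta T_i)$, $\cale_{\alpha\beta}(T_{i-1}+t) = e^{-tB_{\alpha\beta}}\cale_{\alpha\beta}(T_{i-1})$. Consequently, writing $Z_{T_{i-1}} = (\epsilon, x)$ with $\epsilon = (\cale_{\alpha\beta}(T_{i-1}))_{\alpha,\beta}$ and $x = X_{i-1}$, the aggregate intensity of $\sum_\alpha N^\alpha$ at time $T_{i-1}+t$ equals exactly $\mu(t, Z_{T_{i-1}})$, and the individual intensity of $N^\alpha$ equals $\mu^\alpha(t, Z_{T_{i-1}})$. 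From the standard exponential-formula characterization of the first jump time of a point process with given intensity (see e.g.\ \cite{JacodLimit2003}), the conditional survival function of $\Delta T_i$ is $\proba[\Delta T_i > t \mid \calf_{T_{i-1}}] = \exp(-\int_0^t \mu(s, Z_{T_{i-1}})ds)$, which gives the density $f(\cdot, Z_{T_{i-1}}) = \mu(\cdot, Z_{T_{i-1}})\exp(-\int_0^\cdot \mu(s, Z_{T_{i-1}})ds)$ and proves claim 1. Claim 2 follows because, given that a jump occurs at $T_{i-1}+t$, the probability that it is on coordinate $\alpha$ is the ratio of the $\alpha$-th intensity to the aggregate intensity, i.e.\ $\mu^\alpha(t, Z_{T_{i-1}})/\mu(t, Z_{T_{i-1}})$ evaluated at $t = \Delta T_i$; this is a routine consequence of the disjointness of jumps assumption $\Delta N^\alpha \Delta N^\beta = 0$ together with the intensity characterization. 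Claim 3 then follows by combining claim 2 with the mark transition kernel \eqref{defKernelY}: conditioning further on the label $\kappa_i$, the mark is drawn from $\calq_{\kappa_i}(X_{i-1}, \cdot)$, so integrating out $\kappa_i$ against its conditional law from claim 2 yields the mixture $\calq(Z_{T_{i-1}}, \cdot)$.

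Next I would assemble these pieces into the Markov property. The triple $(\Delta T_i, \kappa_i, X_i)$ has, given $\calf_{T_{i-1}}$, a joint conditional law depending only on $Z_{T_{i-1}}$ — explicitly, $\Delta T_i$ has density $f(\cdot, Z_{T_{i-1}})$, then $\kappa_i$ is drawn according to claim 2, then $X_i$ according to $\calq_{\kappa_i}(X_{i-1}, \cdot)$. Since $Z_{T_i} = (\cale_{T_i}, X_i)$ is a deterministic function of $(Z_{T_{i-1}}, \Delta T_i, \kappa_i, X_i)$ — namely $\cale_{\alpha\beta}(T_i) = e^{-\Delta T_i B_{\alpha\beta}}\epsilon_{\alpha\beta} + g_{\alpha\beta}(X_i)\mathbb{1}_{\{\beta = \kappa_i\}}$ — the embedded chain $(Z_{T_i})_{i \geq 0}$ is Markov. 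For the continuous-time process $Z_t = (\cale_t, X_t)$ one then uses that between jumps $Z$ evolves deterministically (flow $e^{-tB_{\alpha\beta}}$ on each excitation block, $X$ constant) and that the jump mechanism depends only on the current state, which is the standard structure of a piecewise-deterministic Markov process; the Feller property follows from continuity of $\phi_\alpha$, of the flow, and from the Feller assumption on the kernels $\calq_\alpha$, after checking that the map $z \mapsto (\text{law of } (\Delta T_i, \kappa_i, X_i))$ is weakly continuous — continuity of $t \mapsto \mu(t,z)$ and $z \mapsto \mu(t,z)$ suffices for the survival function and the label probabilities, and Feller continuity of $\calq_\alpha$ handles the mark.

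Finally, the generator is obtained by differentiating the semigroup at $t=0$, or equivalently via Dynkin's formula for PDMPs: for $f$ smooth in its first argument and in the domain of $\call$, the deterministic flow contributes the transport term $-\sum_{\alpha,\beta}\frac{\partial f}{\partial \epsilon_{\alpha\beta}}(z)\cdot B_{\alpha\beta}\epsilon_{\alpha\beta}$ (the time-derivative of $f(e^{-tB_{\alpha\beta}}\epsilon_{\alpha\beta})$ at $t=0$, since $\frac{d}{dt}e^{-tB}\epsilon|_{t=0} = -B\epsilon$), while the jump part contributes, at rate $\psi_\beta(z) = \mu^\beta(0,z)$ for a jump on coordinate $\beta$, the averaged increment $\int_\mathbb{X}\{f((\epsilon_{\alpha\gamma} + g_{\alpha\gamma}(y)\mathbb{1}_{\{\gamma=\beta\}})_{\alpha,\gamma}, y) - f(z)\}\calq_\beta(x,dy)$ — note the total jump rate is $\sum_\beta \psi_\beta(z) = \mu(0,z) = f(0,z)$, consistent with claim 1. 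Summing over $\beta$ gives the displayed formula.

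I expect the main obstacle to be the rigorous justification of the intensity/survival-function identity underlying claims 1--2: one must argue carefully that, conditionally on $\calf_{T_{i-1}}$, the point process restricted to $[T_{i-1}, \infty)$ still has the intensity \eqref{generalIntensityhab}, that this intensity is genuinely a deterministic function of elapsed time through $Z_{T_{i-1}}$ until the next jump (using that $\cale$ satisfies the autonomous linear ODE $\dot{\cale}_{\alpha\beta} = -B_{\alpha\beta}\cale_{\alpha\beta}$ between jumps and $X$ is constant), and that the classical formula relating intensity to the conditional law of the next jump time applies in this marked, multivariate setting. A secondary technical point is verifying the Feller property and identifying the precise domain of $\call$, which I would handle only to the extent needed — it suffices for later applications (the $V$-geometric ergodicity in Theorem \ref{thmVgeom}) that the displayed expression is the generator on a sufficiently rich class of test functions $C^1$ in $\epsilon$.
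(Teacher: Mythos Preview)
Your proposal is correct and follows essentially the same approach as the paper's proof: both argue that between jumps the intensity is a deterministic function of elapsed time through $Z_{T_{i-1}}$ (via the ODE $\dot{\cale}_{\alpha\beta}=-B_{\alpha\beta}\cale_{\alpha\beta}$), then use the standard survival-function formula for the next jump, the intensity-ratio for the label, and averaging over $\kappa_i$ for the mark; the Markov and Feller properties are deduced from the PDMP structure and continuity exactly as you outline. The only minor difference is in the derivation of the generator: the paper computes $t^{-1}(\esp[f(Z_t)]-f(z))$ by hand, decomposing according to the events $\{T_1\leq t<T_2,\kappa_1=\beta\}$, $\{t<T_1\}$, and $\{T_2\leq t\}$ and letting $t\to 0$, whereas you appeal directly to the PDMP/Dynkin formula---but this is the same computation at a different level of detail.
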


The generator $\call$ generalizes simpler cases such as the the queue-reactive model of (\cite{RosenbaumQueue2014}, operator $\calq$, p.109), pure exponential Hawkes process (\cite{clinet2017statistical}, Proof of Proposition 4.5) or the univariate Erlang kernel case (\cite{duarte2019stability}, (3.1)).

\begin{remark*} \label{rmkLinearlyIndependent} We can always assume without loss of generality that for a given $\beta \in \{1,...,d\}$, the components of $e^{-tB_{1\beta}},e^{-tB_{2\beta}}...,e^{-tB_{d\beta}}$ seen as real functions are linearly independent. More precisely, if it were not the case, we could reduce the dimension of $\cale$ by removing subcomponents of the matrices $e^{-tB_{1\beta}},e^{-tB_{2\beta}}...,e^{-tB_{d\beta}}$ until the remaining ones are linearly independent. Moreover, by doing so, we do not break the Markovian structure of the process $Z$ since the removed components are precisely the ones that can be linearly reconstructed from the reduced process. This phenomenon was already pointed out in \cite{clinet2017statistical}, see Remark 3 p.1833.
\end{remark*}

Let us comment on the practical applications of the above proposition. It is important to note that Proposition \ref{propGen} gives an explicit formulation for the conditional distribution of $(\kappa_i, \Delta T_i, X_i)$ for any $i \geq 0$ given the past. This has several important consequences. First, it implies the existence of $(\kappa_i, \Delta T_i, X_i)_{i \geq 0}$ seen as a process, by a straightforward application of Kolmogorov's theorem. In turn, this shows the existence of $Z$ (and hence the random measure $\overline{N}$) which is a deterministic function of the above triplets. Next, $\overline{N}$ is well-formed as a point process with no simultaneous jumps, that is $\proba[\Delta T_i = 0] = 0$ for any $i\geq 0$, since $\Delta T_i$ admits a density with respect to Lebesgue measure by construction. Also, the process never stops ($\proba[\Delta T_i = +\infty] = 0$) as soon as we assume $\phi_\alpha$ bounded from below by some $\eta >0$, for at least one $\alpha \in \{1,...,d\}$. However, further assumptions need to be made to ensure the non-explosiveness of $\overline{N}$ (i.e that $T_n \to +\infty$ as $ \to +\infty$) which will be a consequence of Theorem \ref{thmVgeom} below. Finally, the above construction gives an explicit and iterative way of simulating the process through the distribution of each triplet $(\kappa_i, \Delta T_i, X_i)$, which is of great interest from a practical point of view.  \\

\medskip

We end this section by briefly illustrating our model with a recent popular example from the financial literature (See \cite{RosenbaumQueue2014} for the original queue-reactive model, and more recently \cite{wu2019queue} for a partial extension featuring a Hawkes component). In its simplest form, a queue-reactive Hawkes process consists of a 3-dimensional counting process $(M,L,C)$ whose stochastic intensity and excitation kernel admit the form (\ref{generalIntensityhab})-(\ref{defHgenExp}), representing three order flows happening at a given price level for a particular stock on an electronic market. Market (M) orders (i.e trades) and cancellation (C) orders are events that decrease by one unit the size of the queue $X = X_0 + L - M - C$ at this price limit, whereas limit (L) orders correspond to new incoming orders that increase the size of $X$. In this model, the queue $X$ itself is the mark process, and the state-space is $\mathbb{X} = \naturels$ endowed with the discrete distance inherited from the absolute value $|\cdot|$. In addition to the above specification, we require that the transition kernels for the marks are (with obvious notations) $\calq_L(x,\cdot) = \delta_{x+1}(\cdot)$, $\calq_M(x,\cdot) =\calq_C(x,\cdot) = \delta_{0}(\cdot) \mathbb{1}_{\{x = 0 \}} + \delta_{x-1}(\cdot) \mathbb{1}_{\{x \neq 0 \}}$, where $\delta_y$ is the Dirac measure at point $y$. For simplicity, cancellations are assumed non-exciting, nor can they be excited by other components. For non-degeneracy reasons that will become apparent later, we also assume that when the limit is empty, market and cancellation orders may still occur with an arbitrary rate $\underline{\phi} > 0$. Note that since $0$ is an absorbing state for $\calq_C$ and $\calq_M$, this does not affect the distribution of the queue process in any way. A typical and simple specification for the intensity functions is therefore
\bea
\phi_L(x,u) &=& \nu_L + \sum_{\beta \in \{L,M,C\}} u_\beta, \label{phiL}\\
\phi_M(x,u) &=& \nu_M \mathbb{1}_{\{ x  \neq 0\}} + \underline{\phi}\mathbb{1}_{\{ x  = 0\}} + \sum_{\beta \in \{L,M,C\}} u_\beta, \label{phiM}\\
\phi_C(x,u) &=& \nu_C x + \underline{\phi}\mathbb{1}_{\{ x  = 0\}}, \label{phiC}
\eea 
where $\nu_L,\nu_M, \nu_C >0$. The rationale behind the linearity of $\phi_C$ in $x$ is the following: the limit  orders placed in the queue can be independently cancelled with a Poisson intensity $\nu_C$ at any time (see \cite{abergel2015long}). The above model can be complexified to meet more general conditions, as, for instance, dependence of the other intensity functions in $x$, or a multidimensional queuing system $X$ (see, e.g \cite{RosenbaumQueue2014}). Moreover, random sizes of orders also yield more complex transition kernels $(\calq_L, \calq_M,\calq_C)$.   

\subsection*{$V$-geometric ergodicity of the GEMHP}

In this section, we establish under suitable conditions the $V$-geometric ergodicity of the multidimensional GEMHP. Our strategy consists in first finding a Lyapunov function for the process $Z$, and then in proving that the transition kernel of $Z$ satisfies a non-degeneracy condition given in Lemma \ref{lemND}. We start with a few stability conditions that ensure the existence of a Lyapunov function. We show that those conditions are met in the case of the queue-reactive Hawkes process for the sake of illustration.

\bd
\im[[L1\!\!]] There exists a continuous function $\nu : \mathbb{X} \to \reels_+^d$, such that for all $\alpha =1,...,d$, $\phi_{\alpha}(x,u) \leq \nu_\alpha(x) + \sum_{ \beta = 1}^d u_\beta  $, $x \in \mathbb{X}, u \in \reels_+^d$. 
\ed
The above assumption states that the stochastic intensity should be sub-linear in the excitation terms. Although recent attention has been devoted to quadratic Hawkes processes (see for instance \cite{blanc2017quadratic}), we set aside these processes in the present work. Note that \textbf{[L1]} can be rewritten in terms of $\psi$, as  
$\psi_{\alpha}(z) \leq \nu_\alpha(x) + \sum_{ \beta = 1}^d \langle A_{\alpha\beta}| \epsilon_{\alpha\beta} \rangle $. In the next assumption we call norm-like function any non-negative function $f$ such that $f(x) \to +\infty$ as $|x| \to +\infty$. Moreover we define for $x \in \mathbb{X}$ the quantity $G_{\alpha\beta}(x) = \int_\mathbb{X} g_{\alpha\beta}(y)\calq_\beta(x,dy)$, corresponding to the conditional expected size of a jump in $\lambda^\alpha$ when $N^\beta$ jumps.
\bd
\im[[L2\!\!]] There exist two norm-like functions $f_X, u_X$, and a constant, $L \geq 0$, such that for $|x| \to +\infty$, $\sum_{\alpha =1}^d \nu_{\alpha}(x) = O(u_X(x))$,  and for any $x\in \mathbb{X}$
\bea \label{driftConditionMark}
\sum_{\alpha =1}^d \nu_{\alpha}(x)\int_\mathbb{X}  [f_X(y) - f_X(x)] \calq_\alpha(x, dy) \leq - u_X(x). 
\eea 
Moreover, for $|x| \to +\infty $
\bea \label{driftConditionMark2}
\sum_{\alpha =1}^d \nu_{\alpha}(x) \sum_{\beta = 1}^d G_{\beta\alpha}(x) = o(u_X(x)).
\eea 
Finally, we assume that there exists $\overline{c} > 0$ such that 
\bea \label{momentDriftExpo}
\sup_{x \in \mathbb{X}, \beta \in \{1,...,d\} } \int_\mathbb{X}   e^{\overline{c}\l[ \sum_{\alpha = 1}^d g_{\alpha\beta}(y) + f_X(y) - f_X(x) \r]} \calq_\beta(x, dy) < +\infty.
\eea 
% Finally, the excitations coming from the mark process admit an exponential moment, i.e there exists $\overline{c} > 0$ such that 
% \beas 
% \sup_{x \in \mathbb{X}, \beta \in \{1,...,d\}} \int_\mathbb{X} e^{\overline{c} \sum_{\alpha = 1}^d g_{\alpha\beta}(y)} \calq_\beta(x,dy) < +\infty.
% \eeas 
\ed
Assumption \textbf{[L2]} essentially assumes the existence of a drift condition for the mark process itself. More precisely, (\ref{driftConditionMark}) states that, when properly weighted by the baselines $\nu_\alpha$, there exists a Lyapunov function $f_X$ for $X$, with rate $u_X$ of order at least $\sum_{\alpha=1}^d \nu_\alpha$. Next, (\ref{driftConditionMark2}) represents the contribution of the baseline terms $\nu_\alpha(x)$ in the stochastic intensities to the short-term variation of the excitation levels. It implies that as $|x|$ increases, although the combined effect may tend to infinity, it should be negligible with respect to the rate of the drift-condition (\ref{driftConditionMark}). Finally (\ref{momentDriftExpo}) is a uniform moment condition on the boost functions and $f_X$. Let us specify briefly what happens for the queue-reactive model introduced in (\ref{phiL})-(\ref{phiC}). Clearly \textbf{[L1]} holds with equality. Next, the left-hand side of (\ref{driftConditionMark}) reduces to $\nu_L - \nu_M - \nu_Cx $, while in (\ref{driftConditionMark2}) it is bounded because cancellation orders do not trigger any excitation and $G_{\beta\alpha}$ is bounded for all $\alpha,\beta \in \{L,M,C\}$ by (\ref{momentDriftExpo}). Therefore \textbf{[L2]} is satisfied with $u_X(x) = -\nu_L + \nu_M + \mu_Cx $. Here, the cancellation term $\nu_C x$ plays a crucial role as it pushes the size of the limit back when $X$ becomes too large. Finally, we give a stability condition on the excitation kernels. We let, for $x \in \mathbb{X}$, $s \geq 0$
\bea
H(s,x) = \l( \int_\mathbb{X} h_{\alpha\beta}(s,y)\calq_\beta(x,dy) \r)_{\alpha,\beta = 1,...,d} \in \reels_+^{d \times d},
\eea
and
\bea 
\Phi(x) = \int_0^{+\infty} H(s,x)ds \in\reels_+^{d \times d}.
\eea 
Since all the $B_{\alpha\beta}$'s are invertible with eigenvalues having negative real parts, we have the representation
\bea \label{repPhi}
\Phi_{\alpha\beta}(x) =  \langle A_{\alpha\beta} | B_{\alpha\beta}^{-1} \rangle G_{\alpha\beta}(x)\textnormal{, } \alpha,\beta \in \{1,...,d\}.
\eea 
$\Phi$ corresponds to the conditional expectation of the long-run contribution of the excitation functions to the stochastic intensity after a jump. It is a generalized version of the standard excitation matrix for unmarked Hawkes process, introduced in \cite{BremaudStability1996}, Theorem 7. 
\bd
\im[[L3\!\!]] There exists $\kappa \in \reels^d$ with positive coefficients, and $\rho \in [0,1)$ such that, component-wise, $$\sup_{x \in \mathbb{X}} [\Phi(x)^T \kappa] \leq \rho \kappa.$$ 
\ed 
There are several situations where assumption \textbf{[L3]} can be greatly simplified. First, if the marks are independent and identically distributed, then $\Phi$ is independent of $x$. In this case, it is easy to see that by the Perron-Frobenius theorem, \textbf{[L3]} is satisfied if $\rho(\Phi)$, the spectral radius of $\Phi$, is smaller than unity. This corresponds exactly to the standard stability condition derived in \cite{BremaudStability1996} for general multivariate Hawkes processes. When the marks are not independent, but the point process is univariate, \textbf{[L3]} reduces to $\sup_{x \in \mathbb{X}} \int_0^{+\infty}h(s,x)ds  < 1$. In other words, $h$ should have an $\mathbb{L}_1$ norm uniformly smaller than $1$ in time. Finally note that Condition \textbf{[L3]} is the main obstacle against including negative functions $h_{\alpha\beta}$. The Lyapunov function we use below seems difficult to adapt when allowing the matrix-exponential kernels to be negative which is why they have been set aside in this paper. In particular, there is no clear way of generalizing the excitation matrix (\ref{repPhi}) so that it can then be exploited with a correct drift function. We are now ready to derive a Lyapunov function for $Z$.

\begin{lemma*} \label{lemDrift}
Assume \textnormal{\textbf{[E]}} and \textnormal{\textbf{[L1]-[L3]}}. For any $\alpha,\beta \in \{1,...,d\}$ there exist a matrix $a_{\alpha\beta} \in \reels^{p \times p}$ with positive coefficients and $\eta >0$ such that, for $z \in \E \times \mathbb{X}$, defining $V(z) = \textnormal{exp}(\sum_{\alpha, \beta =1}^d a_{\alpha\beta} \epsilon_{\alpha\beta} + \eta f_X(x) )$, we have the drift condition
\beas 
\call V \leq -\delta V + L,
\eeas
for some $\delta>0$, $L \geq 0$.
\end{lemma*}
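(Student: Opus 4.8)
\textbf{Proof plan for Lemma \ref{lemDrift}.}

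The plan is to apply the generator $\call$ computed in Proposition \ref{propGen} to a trial function of the exponential form $V(z) = \textnormal{exp}(\sum_{\alpha,\beta} a_{\alpha\beta}\epsilon_{\alpha\beta} + \eta f_X(x))$ and to choose the positive matrices $a_{\alpha\beta}$ and the scalar $\eta > 0$ so that the jump contribution is dominated by the deterministic decay coming from the $-\frac{\partial V}{\partial \epsilon_{\alpha\beta}}\cdot B_{\alpha\beta}\epsilon_{\alpha\beta}$ term. Concretely, I would first write
$$
\call V(z) = V(z)\left[ \sum_{\beta=1}^d \psi_\beta(z) \int_\mathbb{X} \left( e^{\sum_{\alpha} a_{\alpha\beta}g_{\alpha\beta}(y) + \eta(f_X(y) - f_X(x))} - 1 \right) \calq_\beta(x,dy) - \sum_{\alpha,\beta} \langle a_{\alpha\beta} | B_{\alpha\beta}\epsilon_{\alpha\beta}\rangle \right],
$$
using that $\partial V/\partial\epsilon_{\alpha\beta} = a_{\alpha\beta} V$. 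Since the eigenvalues of $B_{\alpha\beta}$ have positive real parts, the term $\langle a_{\alpha\beta}|B_{\alpha\beta}\epsilon_{\alpha\beta}\rangle$ grows linearly in $\epsilon_{\alpha\beta}$ and, with $a_{\alpha\beta}$ chosen appropriately (e.g. proportional to a left eigen-like vector of $B_{\alpha\beta}^T$ so that $\langle a_{\alpha\beta}|B_{\alpha\beta}\epsilon_{\alpha\beta}\rangle \geq c_{\alpha\beta}\langle a_{\alpha\beta}|\epsilon_{\alpha\beta}\rangle$ for some $c_{\alpha\beta}>0$ on the cone $\epsilon_{\alpha\beta}\geq 0$), this provides a strictly negative drift of order $\langle a_{\alpha\beta}|\epsilon_{\alpha\beta}\rangle$.

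Next I would bound the jump term. Using \textbf{[L1]} in the form $\psi_\beta(z) \leq \nu_\beta(x) + \sum_\gamma \langle A_{\beta\gamma}|\epsilon_{\beta\gamma}\rangle$, the jump contribution splits into a part proportional to $\nu_\beta(x)$ and a part proportional to $\langle A_{\beta\gamma}|\epsilon_{\beta\gamma}\rangle$. For the $\epsilon$-proportional part, the key is to take the coefficients $a_{\alpha\beta}$ small enough that the exponential moment $\int_\mathbb{X} e^{\sum_\alpha a_{\alpha\beta}g_{\alpha\beta}(y) + \eta(f_X(y)-f_X(x))}\calq_\beta(x,dy)$ stays finite and close to $1$; this is exactly what \textbf{[momentDriftExpo]} guarantees, once $\max_{\alpha}a_{\alpha\beta}$ and $\eta$ are taken below $\overline c$. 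More precisely, I would linearize: for small $a_{\alpha\beta}$, $e^{\sum_\alpha a_{\alpha\beta}g_{\alpha\beta}(y)} - 1 \leq \sum_\alpha a_{\alpha\beta}g_{\alpha\beta}(y) e^{\sum_\alpha a_{\alpha\beta}g_{\alpha\beta}(y)}$, integrate against $\calq_\beta$, and recognize $\int g_{\alpha\beta}\,d\calq_\beta = G_{\alpha\beta}$; the resulting $\langle A_{\beta\gamma}|\epsilon_{\beta\gamma}\rangle \cdot \sum_\alpha a_{\alpha\beta}G_{\alpha\beta}(x)$ term must be reabsorbed into the $\langle a_{\alpha\gamma}|B_{\alpha\gamma}\epsilon_{\alpha\gamma}\rangle$ negative drift — this is where \textbf{[L3]} enters, since the spectral condition $\sup_x \Phi(x)^T\kappa \leq \rho\kappa$ with $\rho<1$ and the representation $\Phi_{\alpha\beta}(x) = \langle A_{\alpha\beta}|B_{\alpha\beta}^{-1}\rangle G_{\alpha\beta}(x)$ is precisely what lets one choose the $a_{\alpha\beta}$ (built from $\kappa$ and $B_{\alpha\beta}^{-1}$-type vectors) so that the excitation-feeds-excitation loop is a strict contraction and the net $\epsilon$-coefficient is negative, say $\leq -\delta' V \sum_{\alpha,\beta}\langle a_{\alpha\beta}|\epsilon_{\alpha\beta}\rangle$ up to bounded terms. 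For the $\nu_\beta(x)$-proportional part of the jump term, I would write $e^{\sum_\alpha a_{\alpha\beta}g_{\alpha\beta}(y) + \eta(f_X(y)-f_X(x))} - 1 = (e^{\sum_\alpha a_{\alpha\beta}g_{\alpha\beta}(y)}-1)e^{\eta(f_X(y)-f_X(x))} + (e^{\eta(f_X(y)-f_X(x))} - 1)$, multiply by $\nu_\beta(x)$ and sum; the second summand, when $\eta$ is small, is $\approx \eta \sum_\beta \nu_\beta(x)\int(f_X(y)-f_X(x))\calq_\beta(x,dy) \leq -\eta u_X(x)$ by \textbf{[driftConditionMark]}, while the first summand is $O(\sum_\beta\nu_\beta(x)\sum_\alpha G_{\alpha\beta}(x)) = o(u_X(x))$ by \textbf{[driftConditionMark2]} (using boundedness of the exponential factors via \textbf{[momentDriftExpo]}); hence for $|x|$ large the $x$-part is $\leq -\frac{\eta}{2}u_X(x)$, and since $\sum_\beta\nu_\beta(x) = O(u_X(x))$ it is bounded on the remaining compact region.

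Putting the two estimates together gives $\call V(z) \leq V(z)\left[ -\delta'\sum_{\alpha,\beta}\langle a_{\alpha\beta}|\epsilon_{\alpha\beta}\rangle - \tfrac{\eta}{2}u_X(x) + K \right]$ for some constant $K$, on the complement of a compact set in $z$; taking $\delta = \min(\delta', \eta/2) \cdot \tfrac12$ (after noting $\sum_{\alpha,\beta}\langle a_{\alpha\beta}|\epsilon_{\alpha\beta}\rangle + \eta f_X(x) \to +\infty$ dominates any fixed constant outside a compact set, so the bracket is $\leq -2\delta$ there), and bounding $\call V$ by a finite constant $L$ on the compact set by continuity, yields $\call V \leq -\delta V + L$. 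The main obstacle is the coupled choice of the matrices $a_{\alpha\beta}$: they must simultaneously (i) make each $\langle a_{\alpha\beta}|B_{\alpha\beta}\epsilon_{\alpha\beta}\rangle$ a positive-definite-like linear form on the nonnegative cone, (ii) be small enough for the exponential moment \textbf{[momentDriftExpo]} to bite, and (iii) encode the contraction $\rho<1$ of \textbf{[L3]} through the $\langle A_{\alpha\beta}|B_{\alpha\beta}^{-1}\rangle$ pairing so that the excitation cross-terms are strictly reabsorbed. Reconciling (i) and (iii) — essentially showing that one can pass from the $\Phi$-level contraction to an $\epsilon$-level Lyapunov inequality — is the delicate quantitative step; I expect it to require working with the Perron eigenvector $\kappa$ of the $\Phi(x)$ family and choosing $a_{\alpha\beta} = \kappa_\alpha\, b_{\alpha\beta}$ where $b_{\alpha\beta}$ is adapted to the spectral structure of $B_{\alpha\beta}$ (so that $\langle b_{\alpha\beta}|B_{\alpha\beta}\epsilon_{\alpha\beta}\rangle \geq \textnormal{const}\cdot\langle b_{\alpha\beta}|\epsilon_{\alpha\beta}\rangle$ and $\langle A_{\alpha\beta}|\epsilon_{\alpha\beta}\rangle$ is controlled by $\langle b_{\alpha\beta}|B_{\alpha\beta}\epsilon_{\alpha\beta}\rangle$ up to the factor $\langle A_{\alpha\beta}|B_{\alpha\beta}^{-1}\rangle$), then scaling everything down uniformly to satisfy the moment constraint.
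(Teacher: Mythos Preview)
Your plan is structurally the same as the paper's, but the paper resolves your stated obstacle by an explicit and much simpler choice than the eigenvector-of-$B_{\alpha\beta}^T$ route you sketch. It takes $a_{\alpha\beta}=\lambda\,\tilde a_{\alpha\beta}$ with $\tilde a_{\alpha\beta}=(B_{\alpha\beta}^{-1})^T A_{\alpha\beta}\kappa_\alpha$ and a small scaling parameter $\lambda\in(0,1]$. This choice makes $\langle a_{\alpha\beta}\,|\,B_{\alpha\beta}\epsilon_{\alpha\beta}\rangle=\lambda\kappa_\alpha\langle A_{\alpha\beta}\,|\,\epsilon_{\alpha\beta}\rangle$ \emph{exactly}, and simultaneously $\langle\tilde a_{\alpha\beta}\,|\,I_{p\times p}\rangle\,G_{\alpha\beta}(x)=\kappa_\alpha\langle A_{\alpha\beta}\,|\,B_{\alpha\beta}^{-1}\rangle\,G_{\alpha\beta}(x)=\kappa_\alpha\Phi_{\alpha\beta}(x)$; so after bounding $\psi_\beta$ via \textbf{[L1]}, the net $\epsilon$-coefficient is $\lambda\sum_{\beta,\gamma}\big[\sum_\alpha\kappa_\alpha\Phi_{\alpha\beta}(x)-\kappa_\beta\big]\langle A_{\beta\gamma}\,|\,\epsilon_{\beta\gamma}\rangle\leq\lambda(\rho-1)\underline\kappa\sum_{\beta,\gamma}\langle A_{\beta\gamma}\,|\,\epsilon_{\beta\gamma}\rangle$ directly by \textbf{[L3]}. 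No separate comparison between $\langle a\,|\,B\epsilon\rangle$ and $\langle a\,|\,\epsilon\rangle$ is needed, which is fortunate since $B_{\alpha\beta}$ may have complex eigenvalues (the oscillatory kernels of Proposition~\ref{propKernel}) and your inequality (i) is not obviously available on the relevant cone.

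Two further points where the paper is sharper than your outline. First, the linearization is done via $e^{\lambda x}-1-\lambda x\leq\lambda^2 e^{x}$ (valid for $\lambda\in[0,1]$), which cleanly separates an exact $O(\lambda)$ linear term from an $O(\lambda^2)$ remainder bounded uniformly by \textbf{[momentDriftExpo]}; your bound $e^x-1\leq xe^x$ leaves the exponential under the integral correlated with $g(y)$ and would need an extra H\"older step. The paper then sets $\tilde\eta=\lambda$ (hence $\eta=\lambda^2$) and takes $\lambda$ small, which handles the $\epsilon$- and $x$-parts simultaneously. Second, the paper explicitly checks that with this choice $\langle\tilde a_{\alpha\beta}\,|\,\cale_{\alpha\beta}(t)\rangle=\kappa_\alpha\int_{[0,t]\times\mathbb{X}}\big(\int_{t-s}^{\infty}\langle A_{\alpha\beta}\,|\,e^{-uB_{\alpha\beta}}\rangle\,du\big)g_{\alpha\beta}(x)\,\overline N_\beta(ds,dx)\geq 0$ on admissible states, using $h_{\alpha\beta}\geq 0$; this is what guarantees $V$ is norm-like and is precisely where nonnegativity of the kernels enters.
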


%\subsubsection*{Non-degeneracy of $Z$}
For $T \geq 0$, we define $P^T$ the transition kernel for $Z$. We now give two conditions that ensure the non-degeneracy, and the existence of an open accessible small set for $P^T$.

\bd
\im[[ND1\!\!]] There exist $\underline{\phi}, \underline{g} > 0$ such that for any $\alpha,\beta \in \{1,...,d\}$, $\phi_{\alpha} \geq \underline{\phi}$ and $g_{\alpha\beta} \geq \underline{g}$.
\ed

We recall that a point $x \in \mathbb{X}$ is reachable for a transition kernel if any neighborhood $V$ of $x$ is accessible for this kernel.

\bd
 \im[[ND2\!\!]] The transition kernel $\calq$ admits a reachable point $x_0 \in \mathbb{X}$. Moreover, for every $\beta \in \{1,...,d\}$, the transition kernel $\calq_\beta$ admits a sub-component $\calt_\beta$, such that there exists a non-trivial measure $\sigma_\beta$ on $\mathcal{X}$ and a lower semi-continuous non-negative function $r_\beta : \mathbb{X}^2 \to \reels_+$, such that 
 \begin{itemize}
     \item For any non-empty open set $O \in \mathcal{X}$, $\sigma_\beta(O) >0.$
     \item For any $x \in \mathbb{X}$, $A \in \mathcal{X}$, $\calt_\beta(x,A) = \int_\mathcal{X} r_\beta(x,y) \sigma_\beta(dy)$, and $\calt_\beta(x,\mathbb{X}) > 0$.
     %\item For any $A \in \mathcal{X}$, $x \to \calt_\beta(x,A)$ is lower semi-continuous. 
 \end{itemize}

%  there exists $(x_0,x_1,...,x_{p^2d^2}) \in \mathbb{X}^{p^2d^2}$ where $x_0$ is reachable, and such that for any neighborhoods $J_0,J_1,...,J_{p^2d^2}$, letting $\underline{J} = J_1 \times ...\times J_{p^2d^2-1}$, we have
% $$ \inf_{x \in J_0} \calt^{\otimes p^2d^2} (x, \underline{J} \times (J_{p^2d^2} \cap  B) ) \geq \underline{\mu}(B),$$
% for any $B \in \mathcal{X}$ and with $\underline{\mu}$ a non-trivial measure.
\ed 

\begin{remark*}
Note that by Fatou's lemma and the semi-lower continuity of $r$ in its first argument, $\calq_{\beta}$ is a $T$-kernel for any $\beta \in \{1,...,d\}$ (\cite{douc2018markov}, Definition 12.2.1, p. 270). If $\mathbb{X}$ is discrete (as in the queue-reactive Hawkes case) then \textnormal{\textbf{[ND2]}} is trivially satisfied (taking $\sigma$ as the counting measure and the discrete topology). Note also that for the queue-reactive Hawkes model, $0$ is a reachable point since a trade or a cancellation will always occur with non-zero probability when the limit is non-empty. Finally, if $\mathbb{X} \subset \reels^q$ and $\sigma$ is the Lebesgue measure, then the first point is automatically satisfied.
\end{remark*}

In the next lemma, $\textbf{B}(\E)$ stands for the Borel $\sigma$-field associated to $\E = \reels^{p^2d^2}$.

\begin{lemma*} \label{lemND}
There exists $T > 0$ such that $P^T$ admits an open accessible small set $U \in \textbf{B}(\E) \otimes \mathcal{X} $, that is, there exists a non-trivial measure $\underline{\nu}$ such that 
\beas 
\inf_{z \in U} P^T(z,dz') \geq \underline{\nu}(dz').
\eeas
Consequently, all compacts sets are petite.  
\end{lemma*}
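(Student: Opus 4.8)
The plan is to work with the jump-chain description of $Z$ from Proposition \ref{propGen} and to construct $U$ by conditioning the transition kernel $P^T$ on a fixed number $n$ of jumps carrying a prescribed label sequence. Fix $T>0$, an integer $n$ chosen large, and a pattern $b=(b_1,\dots,b_n)\in\{1,\dots,d\}^n$ in which every covariate appears sufficiently many times. On the event that exactly $n$ jumps occur in $[0,T]$ with labels $\kappa_i=b_i$ for $i=1,\dots,n$, the terminal state is the explicit deterministic function $Z_T=\Psi_z(t_1,\dots,t_n,x_1,\dots,x_n)$, with $X_T=x_n$ and $\cale_{\alpha\beta}(T)=e^{-TB_{\alpha\beta}}\epsilon_{\alpha\beta}+\sum_{i:\,b_i=\beta}e^{-(T-t_i)B_{\alpha\beta}}g_{\alpha\beta}(x_i)$, where $z=(\epsilon,x)$ and $0<t_1<\dots<t_n<T$. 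By Proposition \ref{propGen}, on this event the conditional law of $(t_i,x_i)_{i\le n}$ factorizes through the inter-jump densities $f(\cdot,Z_{T_{i-1}})$, the label weights $\mu^{b_i}(\cdot,Z_{T_{i-1}})/\mu(\cdot,Z_{T_{i-1}})$ and the mark kernels $\calq_{b_i}(X_{i-1},\cdot)$, so that the restriction of $P^T(z,\cdot)$ to this event is a pushforward under $\Psi_z$ of an explicit product-type measure.

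I would then lower-bound each ingredient uniformly over $z$ in a bounded open set. Assumption \textbf{[ND1]} ($\phi_\alpha\ge\underline\phi$, $g_{\alpha\beta}\ge\underline g$) together with the sub-linear bound \textbf{[L1]} gives two-sided bounds on $\mu$, hence a strictly positive lower bound for the joint density of $(t_1,\dots,t_n)$ on a sub-simplex of $\{0<t_1<\dots<t_n<T\}$ and a positive lower bound for the probability of the label pattern $b$. For the marks I would replace each $\calq_{b_i}$ by its sub-component $\calt_{b_i}$ from \textbf{[ND2]}, whose density $r_{b_i}(x_{i-1},\cdot)$ with respect to $\sigma_{b_i}$ is lower semicontinuous and therefore bounded below by a positive constant on a suitable compact set of pairs, while $\sigma_{b_i}$ charges every non-empty open set. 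Combining these, the restriction of $P^T(z,\cdot)$ to the chosen event dominates the pushforward under $\Psi_z$ of a measure admitting a density, bounded below uniformly over $z$ in a small open set $U$, with respect to Lebesgue measure on the time simplex tensored with $\bigotimes_{i=1}^n\sigma_{b_i}$.

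The core of the proof is a submersion argument: for $z\in U$ I would freeze the intermediate marks $x_1,\dots,x_{n-1}$ at convenient values and show that the map sending $(t_1,\dots,t_n,x_n)$ to $(\cale(T),X_T)\in\E\times\mathbb{X}$ is a submersion onto a neighbourhood of a fixed target point, with Jacobian bounded away from $0$. Here the matrix-exponential form (\ref{defHgenExp}), together with the reduction of Remark \ref{rmkLinearlyIndependent}, is exactly what is needed: linear independence of the real functions appearing as the entries of $e^{-\cdot B_{1\beta}},\dots,e^{-\cdot B_{d\beta}}$ makes the differential in the time variables (a Wronskian-type span) surjective onto $\E$ once $n$ is large, while $X_T=x_n$ is steered independently through the spread-out $\sigma_{b_n}$-component. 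Pushing the lower-bounded density forward then produces a non-trivial measure $\underline\nu(d\epsilon'\,dx')=c\,\mathbb{1}_{U'}(\epsilon')\,d\epsilon'\otimes\mathbb{1}_{O}(x')\,\sigma_{b_n}(dx')$, with $U'\subset\E$, $O\subset\mathbb{X}$ open and $c>0$, such that $\inf_{z\in U}P^T(z,dz')\ge\underline\nu(dz')$, i.e. $U$ is an open small set. I expect this rank computation, and the bookkeeping required to make every bound uniform over the open set $U$ rather than over a single starting point (which forces $U$ to be taken small), to be the main obstacle.

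It remains to check accessibility and to deduce petiteness of compacts. Using the reachable point $x_0$ of $\calq$ from \textbf{[ND2]} together with \textbf{[ND1]} (which allows arbitrarily long inter-jump intervals, so $\cale$ decays near $0$ while the mark is driven into any neighbourhood of $x_0$), I would show that a point of $\overline U$ is reachable for $Z$; choosing the target point of the submersion step near $(0,x_0)$ then makes $U$ accessible for the resolvent of $P^T$. Finally, $P^T$ is weak Feller by Proposition \ref{propGen}, the minorizing measure $\underline\nu$ supported by the accessible open set $U$ makes $P^T$ $\psi$-irreducible, and the $T$-kernel property of $\calq$ (noted in the remark following \textbf{[ND2]}) combined with the smooth dependence of $\cale(T)$ on the jump data shows that the skeleton is a $T$-chain; by the standard theory of irreducible $T$-chains (\cite{douc2018markov}), every compact subset of $\E\times\mathbb{X}$ is then petite, which is the last assertion.
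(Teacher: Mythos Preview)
Your proposal is correct and follows essentially the same strategy as the paper: the paper conditions on exactly $n=p^2d^2$ jumps with the block label pattern $b=(1,\dots,1,2,\dots,2,\dots,d,\dots,d)$ (each block of length $p^2d$), and then carries out precisely the submersion/change-of-variables argument you describe, isolating the Wronskian-type rank computation in two auxiliary lemmas (Lemmas \ref{lemDeterminant} and \ref{lemGradientInvertible}) and invoking Lemma 6.2 of \cite{benaim2015qualitative} to obtain the uniform local diffeomorphism over a neighborhood of $(\mathbf{x}^*,z^*)$ with $z^*=(0,x_0)$. Your accessibility and petiteness arguments also match the paper's; the only notable simplification in the paper is that, with $n=p^2d^2$ equal to $\dim\E$, the differential in the time variables is square and one proves invertibility directly rather than surjectivity.
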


%\subsubsection*{$V$-geometric ergodicity of $Z$}

We now use the results of the two previous paragraphs and conclude this section by establishing the $V$-geometric ergodicity of the Markov process $Z$. Recall that for a positive function $V$, the $V$-norm of a measure $\mu$ on a measurable space $(S,\cals)$ is defined as 
\beas 
\|\mu \|_{V}  = \sup_{ \psi| \psi \leq V}\l| \int_S \psi(s) \mu(ds)\r|
\eeas 
where the supremum is taken over all the measurable functions $\psi : S \to \reels_+$ such that $\psi \leq V$.

\begin{theorem*} \label{thmVgeom}
($V$-geometric ergodicity) Assume \textnormal{\textbf{[E]}}, \textnormal{\textbf{[L1]-[L3]}} and \textnormal{\textbf{[ND1]-[ND2]}}. Then $Z$ is $V$-geometric ergodic: there exists a unique invariant measure $\pi$, two constants $C \geq 0$, $0 \leq r < 1 $  such that for any $T >0$, for any $z \in \E \times \mathbb{X}$
$$ \| P^T(z, .) - \pi\|_V \leq C(1+V(z))r^T $$
where $V(z) = \textnormal{exp}(\sum_{\alpha, \beta =1}^d a_{\alpha\beta} \epsilon_{\alpha\beta} + \eta f_X(x) )$, with $(a_{\alpha\beta})_{\alpha,\beta \in \{1,...,d\}}$, $\eta$ and $f_X$ as in Lemma \ref{lemDrift}. Moreover, $Z$ is $V$-geometrically mixing, that is there exists $C' >0$, $0 \leq r' < 1 $ such that for any $t,u \geq 0$, for any $\phi,\psi$ such that  $\phi^2 \leq V$, $\psi^2 \leq V$, there
\beas 
\l|\esp[\phi(Z_{t+u})\psi(Z_{t}) | Z_0 = z] - \esp[\phi(Z_{t+u}) | Z_0 = z]\esp[\psi(Z_{t}) | Z_0 = z] \r| \leq C' V(z) r'^u.
\eeas 

\end{theorem*}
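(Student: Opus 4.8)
The plan is to apply the standard Meyn--Tweedie machinery for $V$-geometric ergodicity of continuous-time Markov processes, using the two ingredients that Lemma \ref{lemDrift} and Lemma \ref{lemND} have been designed to provide: a geometric drift condition and an accessible small set. Concretely, first I would work with the skeleton chain $(Z_{kT})_{k \geq 0}$ for a fixed $T > 0$ chosen as in Lemma \ref{lemND}, so that $P^T$ admits an open accessible small set $U$ with minorizing measure $\underline{\nu}$. The drift inequality $\call V \leq -\delta V + L$ from Lemma \ref{lemDrift}, combined with the fact that $V$ belongs to the domain of $\call$ and Dynkin's formula (applied carefully, e.g.\ along a localizing sequence of stopping times to handle a priori non-integrability and then passing to the limit by Fatou and Gronwall), yields the integrated bound $P^T V \leq \gamma V + b$ for some $\gamma = \gamma(T) \in (0,1)$ and $b = b(T) < \infty$; this is exactly the discrete-time geometric drift condition $(\mathbf{D}_{\mathrm{geo}})$ of \cite{douc2018markov} (or condition (V4) of Meyn--Tweedie). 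Together with the small set $U$ — which, because $V$ is norm-like and $\{V \leq R\}$ is compact hence petite by Lemma \ref{lemND}, can be taken to be a sublevel set of $V$ — this gives $\psi$-irreducibility, aperiodicity (the minorization holds for a single fixed $T$, and one checks the chain is aperiodic using accessibility of $x_0$ and positivity of the waiting-time density $f$), and positive Harris recurrence of the skeleton. The standard geometric ergodicity theorem for Harris chains then delivers a unique invariant probability $\pi^T$ and constants $C_T, r_T \in [0,1)$ with $\|P^{kT}(z,\cdot) - \pi^T\|_V \leq C_T(1 + V(z)) r_T^k$.

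Second, I would upgrade this skeleton estimate to the full continuous-time statement. The Feller property and non-explosiveness (the latter following from the drift condition, since $V$ norm-like plus $\call V \leq -\delta V + L$ precludes explosion) guarantee that $\pi := \pi^T$ is the invariant measure for the whole semigroup $(P^t)_{t \geq 0}$ and is independent of the choice of $T$. To interpolate between skeleton times, write any $t \geq 0$ as $t = kT + s$ with $s \in [0,T)$ and use $P^t = P^s P^{kT}$ together with a uniform-in-$s \in [0,T)$ bound of the form $P^s V \leq c_T V + d_T$, which again follows by integrating the drift inequality over $[0,s]$. This converts the per-skeleton geometric rate into a genuine exponential rate $r^t$ with a modified constant $C$, giving $\|P^T(z,\cdot) - \pi\|_V \leq C(1 + V(z)) r^T$ for all real $T > 0$ as stated.

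Third, for the $V$-geometric mixing bound I would use the Markov property to condition at time $t$: writing $\mathbb{E}[\phi(Z_{t+u})\psi(Z_t) \mid Z_0 = z] = \mathbb{E}[\psi(Z_t) P^u\phi(Z_t) \mid Z_0 = z]$ and subtracting the product of the marginals, the difference equals $\mathbb{E}[\psi(Z_t)(P^u\phi(Z_t) - \mathbb{E}[\phi(Z_{t+u}) \mid Z_0 = z]) \mid Z_0 = z]$. Now $|P^u\phi(y) - \pi(\phi)| \leq \|P^u(y,\cdot) - \pi\|_{\sqrt V} \cdot (\text{const})$ when $\phi^2 \leq V$ (so $|\phi| \leq \sqrt V$), and $\|P^u(y,\cdot)-\pi\|_{\sqrt V} \leq C(1+\sqrt{V(y)}) r^u \leq C' \sqrt{V(y)}\, r^u$ up to adjusting constants since $V \geq 1$; here one uses that $\sqrt V$ is itself norm-like and satisfies a drift condition (with rate $\delta/2$ roughly), so the theorem applies to $\sqrt V$ as well. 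The remaining expectation $\mathbb{E}[|\psi(Z_t)|\sqrt{V(Z_t)} \mid Z_0=z] \leq \mathbb{E}[V(Z_t)\mid Z_0 = z] \leq V(z) + b/(1-\gamma) \leq C'' V(z)$ is controlled uniformly in $t$ by the drift condition, and the ``centering'' by $\mathbb{E}[\phi(Z_{t+u})\mid Z_0=z]$ rather than $\pi(\phi)$ only improves matters (or is absorbed by the same estimates). Collecting these gives the claimed $C' V(z) r'^u$ with $r' = r$.

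The main obstacle I anticipate is not any single step but the careful verification that the formal generator inequality $\call V \leq -\delta V + L$ legitimately integrates to $P^t V \leq \gamma_t V + b_t$: $V$ is an exponential of a linear functional of $\cale$ plus $\eta f_X$, and establishing that $V$ is genuinely in the domain of $\call$ (or at least that Dynkin's formula and the comparison theorem apply — e.g.\ via \cite{douc2018markov}, or Meyn--Tweedie's continuous-time results) requires controlling the jump part of $\call$, i.e.\ the integrals $\int_{\mathbb{X}} [V(\cdot + g_{\alpha\gamma}(y)e_{\alpha\gamma}) - V(\cdot)]\psi_\beta(z)\calq_\beta(x,dy)$, which is precisely where the exponential moment condition \eqref{momentDriftExpo} in \textbf{[L2]} and the sub-linearity \textbf{[L1]} enter. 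I would also need to be slightly careful that the small set from Lemma \ref{lemND} can be enlarged to (or combined with) a sublevel set $\{V \leq R\}$ that is both small and absorbing enough for the drift-to-ergodicity implication; this is routine given compactness of sublevel sets of the norm-like $V$ and the ``all compacts are petite'' conclusion of Lemma \ref{lemND}, but it is the kind of detail worth stating explicitly rather than glossing.
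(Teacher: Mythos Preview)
Your proposal is correct and follows essentially the same approach as the paper: combine the drift condition of Lemma \ref{lemDrift} with the fact that compact sets are petite (Lemma \ref{lemND}) and invoke the Meyn--Tweedie machinery. The paper is simply far more terse, citing Theorem 6.1 of \cite{MeynTweedieprocessiii1993} directly for the $V$-geometric ergodicity (which packages the skeleton-chain and interpolation arguments you spell out) and the proof of Theorem 16.1.5 in \cite{MeynTweedieMarkovChain2009} for the $V$-geometric mixing bound; your detailed unpacking of these black boxes is accurate but not needed once those references are invoked.
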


Theorem \ref{thmVgeom} is closely related to several results from the existing literature. In the Hawkes process literature, \cite{abergel2015long} first derived a polynomial Lyapunov function for the restricted case of a Hawkes process having pure exponential kernels. Proposition 4.5 from \cite{clinet2017statistical} extended the argument to the case of an exponential drift function $V(\epsilon) = \textnormal{exp}(\sum_{\alpha, \beta =1}^d a_{\alpha\beta} \epsilon_{\alpha\beta})$, and establishes the full $V$-geometric ergodicity of the process. In \cite{duarte2019stability}, the authors construct a Lyapunov function (Proposition 3) and then establish the exponential convergence towards $0$ of the Wasserstein distance between $\nu P^T$ and $\pi$ for any starting measure $\nu$ (Theorem 1), in the case where the original process $\overline{N}$ is unmarked and univariate, and where the excitation kernel $h$ admits the form $h(s) = \sum_{i=1}^P c_i s^i e^{-r_i s}$, $r_i >0$ for $i=1,...,P$. An important consequence of Theorem \ref{thmVgeom} is the following result.
 
\begin{corollary*} \label{corolVgeom}
 Assume \textnormal{\textbf{[E]}}, \textnormal{\textbf{[L1]-[L3]}} and \textnormal{\textbf{[ND1]-[ND2]}}. Then, up to a change of probability space, there exists a two-sided stationary version $\overline{N}' = (\overline{N}_\alpha')_{\alpha \in \{1,...,d\}}$ of the GEMHP, where $\overline{N}'$ is a family of random measures on $\reels \times \mathbb{X}$ and where its stochastic intensity $\lambda'$ satisfies for any $t \in \reels$
 \bea \label{eqInfinity}
 \lambda'^\alpha(t) = \phi_\alpha \l( \l(\int_{(-\infty,t) \times \mathbb{X}} \langle A_{\alpha\beta} | e^{-(t-s) B_{\alpha\beta}} \rangle g_{\alpha\beta}(x)\overline{N}_\beta'(ds,dx)\r)_{\beta =1,...,d},X_{t-}'\r),
 \eea 
where $X'$ is the associated stationary two-sided mark process.
\end{corollary*}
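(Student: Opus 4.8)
The plan is to build a stationary version of the driving Markov process $Z=(\cale,X)$ supplied by Theorem~\ref{thmVgeom}, extend it to the whole line, and then recover $\overline{N}'$ as a measurable functional of the stationary trajectory, checking along the way that its stochastic intensity has the self-referential form \eqref{eqInfinity}. First I would invoke Theorem~\ref{thmVgeom} to obtain the unique invariant law $\pi$ of $Z$; integrating the geometric drift inequality $\call V\le-\delta V+L$ of Lemma~\ref{lemDrift} against $\pi$ gives $\pi(V)<+\infty$, which will be the source of all the integrability used below. Starting $Z$ from $Z_0\sim\pi$ yields a strictly stationary one-sided Feller process, and for each integer $n\ge1$ let $Z^{(n)}$ be this process translated to begin at time $-n$. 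Stationarity of $\pi$ makes the family $\{Z^{(n)}\}_n$ consistent, so Kolmogorov's extension theorem produces a two-sided process $Z'=(\cale',X')=(Z'_t)_{t\in\reels}$ which, after taking a right-continuous (with left limits) modification, inherits from its restrictions to the half-lines $[-n,\infty)$: stationarity, the Markov property with semigroup $(P^T)$, and regular paths. Realising $Z'$ in this way is what the "change of probability space" in the statement refers to.

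Next I would reconstruct the point process. Under \textbf{[ND1]}, $g_{\alpha\beta}\ge\underline g>0$, so every jump of $Z'$ at a time $T_i'$ on the "column" $\beta$ increments $\cale'_{\alpha\beta}$ by $g_{\alpha\beta}(X'_{T_i'})>0$ for each $\alpha$ (and leaves the other columns unchanged), cf.\ the generator $\call$ of Proposition~\ref{propGen}; hence the jump times $T_i'$, their labels $\kappa_i'$, and the post-jump marks $X_i'$ (read off $X'$, or off the jump sizes of $\cale'$) are all measurable functions of the trajectory of $Z'$. Setting $\overline{N}_\alpha'=\sum_{i:\,\kappa_i'=\alpha}\delta_{(T_i',X_i')}$ yields a stationary family of random measures on $\reels\times\mathbb{X}$ with no explosion on bounded intervals --- the latter being precisely the non-explosiveness discussed after Proposition~\ref{propGen}, now delivered by Theorem~\ref{thmVgeom}. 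Applying Proposition~\ref{propGen} on each window $[-n,\infty)$ then identifies the stochastic intensity of $\overline{N}_\alpha'$, with respect to the natural filtration $\F'$ of $Z'$ (equivalently of $\overline{N}'$), as
\[
\lambda'^\alpha(t)=\psi_\alpha(Z'_{t-})=\phi_\alpha\big(\big(\langle A_{\alpha\beta}\,|\,\cale'_{\alpha\beta}(t-)\rangle\big)_{\beta=1,\dots,d},\,X'_{t-}\big).
\]

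It then remains to turn $\cale'_{\alpha\beta}(t-)$ into the integral appearing in \eqref{eqInfinity}. For $-n<t$, the between-jump dynamics of $\cale'_{\alpha\beta}$ read off $\call$ (decay $\dot\epsilon_{\alpha\beta}=-B_{\alpha\beta}\epsilon_{\alpha\beta}$, jumps $g_{\alpha\beta}$ of the arriving mark, cf.\ \eqref{defExcitationElementaire}) give
\[
\cale'_{\alpha\beta}(t)=e^{-(t+n)B_{\alpha\beta}}\,\cale'_{\alpha\beta}(-n)+\int_{(-n,t]\times\mathbb{X}}e^{-(t-s)B_{\alpha\beta}}g_{\alpha\beta}(x)\,\overline{N}_\beta'(ds,dx).
\]
Letting $n\to+\infty$: the boundary term tends to $0$ in probability, since $\|e^{-(t+n)B_{\alpha\beta}}\|\le Ce^{-r(t+n)}\to0$ (the eigenvalues of $B_{\alpha\beta}$ have positive real parts) while $\cale'_{\alpha\beta}(-n)$ has, for all $n$, the law of the a.s.\ finite random matrix $\cale'_{\alpha\beta}(0)$; and the integral converges a.s.\ by dominated convergence, the dominating integral $\int_{(-\infty,t]\times\mathbb{X}}e^{-r(t-s)}g_{\alpha\beta}(x)\,\overline{N}_\beta'(ds,dx)$ being a.s.\ finite because its mean equals $\tfrac1r\,\esp_\pi\!\big[G_{\alpha\beta}(X'_0)\,\lambda'^\beta(0)\big]$ by stationarity and the compensation formula, which is finite by $\pi(V)<+\infty$ together with \textbf{[L1]} and \eqref{momentDriftExpo} (which bound $\lambda'^\beta$ and $G_{\alpha\beta}$ in terms of $V$). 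Passing to the limit gives $\cale'_{\alpha\beta}(t-)=\int_{(-\infty,t)\times\mathbb{X}}e^{-(t-s)B_{\alpha\beta}}g_{\alpha\beta}(x)\,\overline{N}_\beta'(ds,dx)$, and substituting into the expression for $\lambda'^\alpha$ above, using $\langle A_{\alpha\beta}\,|\,e^{-(t-s)B_{\alpha\beta}}\rangle g_{\alpha\beta}(x)=h_{\alpha\beta}(t-s,x)$, reproduces \eqref{eqInfinity} exactly.

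I expect the main obstacle to be the interaction of the last two steps: producing the two-sided extension while faithfully preserving the Markov and intensity structure, and then justifying the $n\to+\infty$ passage --- in particular, checking that the reconstructed $\overline{N}'$ carries $\lambda'$ as its compensator \emph{with respect to its own filtration} $\F'$ (rather than an a priori larger one), and that this property survives the limiting procedure that removes the initial condition. The analytic ingredients are in hand ($\pi(V)<+\infty$ and the exponential decay of $e^{-sB_{\alpha\beta}}$), so what remains is careful point-process bookkeeping --- measurability of the reconstruction map, stability of compensators under the relevant filtration changes, and the interchange of limit and compensation in the display above --- which is routine but is where the care must go.
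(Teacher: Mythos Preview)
Your proposal is correct and follows essentially the same route as the paper: obtain the invariant law $\pi$ from Theorem~\ref{thmVgeom}, build a two-sided stationary version $Z'$ of the driving Markov process (the paper simply cites a Kolmogorov-type result from Douc et al.\ rather than spelling out the extension), reconstruct $\overline{N}'$ from the jumps of $Z'$ via \textbf{[ND1]}, and then pass to the limit $u\to-\infty$ in the finite-horizon representation of $\cale'$ using stationarity and the exponential decay of $e^{-sB_{\alpha\beta}}$. Your version is in fact more careful than the paper's terse argument---you explicitly justify $\pi(V)<+\infty$ and the a.s.\ finiteness of the limiting integral, whereas the paper just invokes continuity of $\phi_\alpha$ after noting the boundary term vanishes in probability.
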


\section{Quasi-likelihood analysis for the GEMHP} \label{sectionQLAHawkes}

We finally use Theorem \ref{thmVgeom} to prove that, up to some moment and identifiability conditions, the Quasi-Likelihood Analysis of Section \ref{sectionQLA} applies to the GEMHP, which is stated in Corollary \ref{thmFinal}. For the sake of tractability, we restrict ourselves to the case of a linear process, that is when $\phi_\alpha(x,u) = \nu_\alpha(x) + \sum_{\beta = 1}^d u_\beta$ for any $\alpha \in \{1,...,d\}$. The case where $\phi^\alpha$ is sub-linear and bounded from below uniformly in $\theta$ (among other things) would involve much more involved formulations for Assumpions \textbf{[AH1]-[AH2]} below, which is why it is let aside for future research. Accordingly, we assume that for each parameter $\theta \in \Theta$ and any $\alpha \in \{1,...,d\}$ we have
\bea \label{lambdaGEMHPQLA}
\lambda^\alpha(t, \theta ) =  \nu_\alpha(X_{t-}, \theta) + \sum_{\beta=1}^d\int_{[0,t) \times \mathbb{X}} h_{\alpha\beta}(t-s, x, \theta) \overline{N}_\beta(ds, dx), 
\eea
where, $h_{\alpha\beta}(s,x,\theta) = \langle A_{\alpha\beta}(\theta) | e^{-s B_{\alpha\beta}(\theta)}\rangle g_{\alpha\beta}(x,\theta)$ and such that the real parts of the eigenvalues of $B_{\alpha\beta}(\theta)$ are all bounded from below by some $r  > 0$ which is independent of $\theta \in \Theta$. We do not explicitly specify the shape of each component in (\ref{lambdaGEMHPQLA}) as a function of $\theta$, although, for $h_{\alpha\beta}$, the canonical form (\ref{canonicalFormKernel}) obtained in Proposition \ref{propKernel} yields a very natural parametrization for the excitation kernel through the coefficients appearing in (\ref{canonicalFormKernel}). We now turn our attention to the mark transition kernel. We assume that there exists a dominating measure $\rho$ on $\mathcal{X}$ which plays the same role as in Section \ref{sectionQLA} and a density $p_\beta$ that are such that for any $\beta \in \{1,...,d\}$, and $\theta \in \Theta$
\bea \label{densityQbeta} 
\calq_{\beta}(x,dy,\theta) = p_{\beta}(x,y,\theta) \rho(dy).
\eea 
Note that then, using the notations of Section \ref{sectionQLA}, we obtain that the mark density $q^\alpha$ is simply %independent of $\alpha$ because the mark process $X$ is common to all covariates. Accordingly, we can drop the index $\alpha$, and we have the representation
\bea \label{repqparam}  
q^\alpha(t,x,\theta) =  p_\alpha(X_{t-}, x,\theta)
\eea
as an immediate consequence of (\ref{densityQbeta}). In what follows, we assume that there exists $\theta^* \in \Theta$ corresponding to the true parameter of the model, and hereafter, we will always assume that the underlying GEMHP associated to $\theta^*$ satisfies assumptions \textbf{[L1]-[L3]} and \textbf{[ND1]-[ND2]} so that Theorem \ref{thmVgeom} and Corollary \ref{corolVgeom} hold. Parts of Assumptions \textbf{[A1]-[A4]} are naturally reformulated below for the GEMHP.

\bd
\im[[AH1\!\!]] For any $\alpha,\beta \in \{1,...,d\}$
  \bd
		\im[{\bf (i)}] $\nu_\alpha(x,.),A_{\alpha\beta}, B_{\alpha\beta}, g_{\alpha\beta}(x,\cdot), p_\alpha(u,x,\cdot)$ are in $C^4(\Theta)$ and admit continuous extensions on $\overline{\Theta}$.
		\im[{\bf (ii)}] For any $\theta \in \Theta$, $\lambda^\alpha(t,\theta)q^\alpha(t,x,\theta) = 0 $ if and only if $\lambda^\alpha(t,\theta^*)q^\alpha(t,x, \theta^{*}) =0, dt \rho(dx)$-a.e.
	\ed
\ed

\bd
\im[[AH2\!\!]] For any $p > 1$, for any $\alpha,\beta \in \{1,...,d\}$, uniformly in $\theta \in \Theta$ we have
        \bd 
        \im[{\bf (i)}]$ \sum_{i=0}^3    \l[|\partial_\theta^i\nu_\alpha(x,\theta)|^p + |\nu_\alpha(x,\theta)\mathbb{1}_{\{\nu_\alpha(x,\theta) \neq 0\}}|^{-p}+ \l|\partial_\theta^i g_{\alpha\beta}(x, \theta)\r|^p + \r]   \leq C_pe^{\eta f_X(x)}, $
        \im[{\bf (ii)}] $  \sum_{i=0}^3  \int_{\mathbb{X}} |\partial_\theta^i \textnormal{log}p_\alpha(x,y,\theta)|^p p_{\alpha}(x,y,\theta^*) \rho(dy)  \leq C_pe^{\eta f_X(x)},$
        \ed
        where $C_p \geq 0$ may depend on $p$, and $f_X$ is defined in \textbf{[L2]}.
        %\im[(ii)] $\sup_{\theta \in \Theta} \sum_{i=0}^3  \esp \l[\sup_{\theta \in \Theta} |\partial_\theta^i g_{\alpha\beta}(X_{t-}, \theta)|^p  \r]< +\infty$
		%\im[(iii)] $\sup_{t \in \reels_+}  \sum_{i=0}^3 \int_\mathbb{X} \esp \l[\sup_{\theta \in \Theta} \l|\partial_\theta^i p_\alpha(X_{t-},x,\theta)\r|^p \sum_{\beta=1}^d p_\beta(X_{t-},x,\theta^*)\r] \rho(dx)  <+\infty.$
		%\im[(iv)] $\sup_{t \in \reels_+}  \sum_{i=0}^3 \int_\mathbb{X} \esp \l[\sup_{\theta \in \Theta} \l|\partial_\theta^i p_\alpha(X_{t-},x,\theta)\r|^{-p}\sum_{\beta=1}^d p_\beta(X_{t-},x,\theta^*) \r] \rho(dx)  <+\infty.$
	
\ed

A first consequence of the $V$-geometric ergodicity of the GEMHP and $\textbf{[AH1]-[AH2]}$ is the following shape for the limit field $\mathbb{Y}$.

\begin{lemma*} \label{lemLimitYHawkes}
For any $\theta \in \Theta$, we have 
$$ \mathbb{Y}(\theta) = -\sum_{\alpha=1}^d \esp\l[\int_{\mathbb{X}}\textnormal{log}\l(\frac{f'^\alpha(0,x,\theta)}{f'^\alpha(0,x,\theta^*)}\r)f'^\alpha(0,x,\theta^*) - (f'^\alpha(0,x,\theta)-f'^\alpha(0,x,\theta^*))\rho(dx)\r],$$
where $f'^\alpha(0,x,\theta) = \lambda'^\alpha(0,\theta)q'^\alpha(0,x,\theta)$, corresponds to the density of the predictable compensator of the stationary version $\overline{N}'$ from Corollary \ref{corolVgeom} at time $0$.
\end{lemma*}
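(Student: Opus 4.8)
The plan is to combine the abstract description of $\mathbb{Y}$ available from Lemma \ref{lemY} (which gives $\mathbb{Y}$ as the uniform $\mathbb{L}^p$-limit of $\mathbb{Y}_T$) with the $V$-geometric ergodicity established in Theorem \ref{thmVgeom} and its stationary corollary, Corollary \ref{corolVgeom}. The key observation is that $\mathbb{Y}_T(\theta)$ splits as $\mathbb{Y}_T^{(1)}(\theta)+\mathbb{Y}_T^{(2)}(\theta)$, and after compensating the point-process integrals against their predictable compensators, each is (up to a martingale remainder that vanishes in $\mathbb{L}^p$ after scaling by $T^{-1}$) a time-average of a functional of the Markov process $Z_t=(\cale_t,X_t)$. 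Concretely, I would first write, for $\mathbb{Y}_T^{(1)}$,
$$
\mathbb{Y}_T^{(1)}(\theta)=T^{-1}\sum_{\alpha=1}^d\int_0^T\left[\log\frac{\lambda^\alpha(t,\theta)}{\lambda^\alpha(t,\theta^*)}\lambda^\alpha(t,\theta^*)-\left(\lambda^\alpha(t,\theta)-\lambda^\alpha(t,\theta^*)\right)\right]dt+T^{-1}\sum_{\alpha=1}^d\int_0^T\log\frac{\lambda^\alpha(t,\theta)}{\lambda^\alpha(t,\theta^*)}d\tilde N_t^\alpha,
$$
and similarly express $\mathbb{Y}_T^{(2)}(\theta)$ as $T^{-1}\sum_\alpha\int_0^T\int_{\mathbb{X}}\log\frac{q^\alpha(t,x,\theta)}{q^\alpha(t,x,\theta^*)}q^\alpha(t,x,\theta^*)\rho(dx)\lambda^\alpha(t,\theta^*)\,dt$ plus a $\tilde M^\alpha$-martingale term. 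Using \textbf{[AH1]}--\textbf{[AH2]} together with the moment bounds on $V$ from Theorem \ref{thmVgeom}, the martingale parts have quadratic variation of order $T$, hence vanish in $\mathbb{L}^p$ once divided by $T$; this is standard (Burkholder--Davis--Gundy) and I would not dwell on it.

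Next I would identify the integrand of the remaining time-average as a function of $Z_{t-}$ evaluated at its current ``age zero'' state. The point is the representations $\lambda^\alpha(t,\theta)=\mu^\alpha(0,Z_{t-},\theta)$ (in the linear case this is $\nu_\alpha(X_{t-},\theta)+\sum_\beta\langle A_{\alpha\beta}(\theta)|\cale_{\alpha\beta}(t-)\rangle$) and $q^\alpha(t,x,\theta)=p_\alpha(X_{t-},x,\theta)$ from (\ref{repqparam}); thus the bracketed integrand in $\mathbb{Y}_T^{(1)}(\theta)$ is $\sum_\alpha\Psi^{(1)}_{\alpha,\theta}(Z_{t-})$ and the integrand in $\mathbb{Y}_T^{(2)}(\theta)$ is $\sum_\alpha\Psi^{(2)}_{\alpha,\theta}(Z_{t-})$, for explicit continuous functions $\Psi^{(i)}_{\alpha,\theta}$ on $\E\times\mathbb{X}$. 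By \textbf{[AH2]} these functions satisfy $|\Psi^{(i)}_{\alpha,\theta}(z)|\le C\,V(z)^{1/2}$ (say), uniformly in $\theta$, so they are admissible test functions for the $V$-geometric ergodic theorem. Applying the ergodic theorem for $V$-geometrically ergodic Markov processes (of the type already invoked to get \textbf{[A3]} for this model — cf. \cite{clinet2017statistical}, Lemma 3.16), the time-averages converge in $\mathbb{L}^p$ to $\sum_\alpha\int_{\E\times\mathbb{X}}\Psi^{(i)}_{\alpha,\theta}(z)\,\pi(dz)=\sum_\alpha\esp[\Psi^{(i)}_{\alpha,\theta}(Z_0')]$ under the stationary law. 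Summing the two contributions and recognizing $f'^\alpha(0,x,\theta)=\lambda'^\alpha(0,\theta)q'^\alpha(0,x,\theta)$, the limit is exactly
$$
-\sum_{\alpha=1}^d\esp\left[\int_{\mathbb{X}}\log\!\left(\frac{f'^\alpha(0,x,\theta)}{f'^\alpha(0,x,\theta^*)}\right)f'^\alpha(0,x,\theta^*)-\left(f'^\alpha(0,x,\theta)-f'^\alpha(0,x,\theta^*)\right)\rho(dx)\right].
$$
By uniqueness of limits in $\mathbb{L}^p$ (Lemma \ref{lemY} already guarantees existence of $\mathbb{Y}$), this expression is $\mathbb{Y}(\theta)$.

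The main obstacle I anticipate is the careful bookkeeping that the functionals $\Psi^{(i)}_{\alpha,\theta}$ genuinely lie in the class of test functions dominated by $V$ uniformly in $\theta$, and that the stationary expectations are finite. For $\mathbb{Y}_T^{(1)}$ this requires controlling $\log\lambda^\alpha$ near the event $\{\lambda^\alpha=0\}$ and bounding $\lambda^\alpha(t,\theta)$ linearly in $\cale_{t-}$ and $\nu_\alpha(X_{t-},\theta)$, which in turn needs the exponential moment bounds on $\cale$ and $f_X(X)$ contained in Theorem \ref{thmVgeom}; for $\mathbb{Y}_T^{(2)}$ it requires the integrated moment bound \textbf{[AH2]}(ii) to pass the $\rho(dx)$ integral inside. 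A secondary technical point is justifying the interchange of the $\rho(dx)$-integration with the stationary expectation (Fubini/Tonelli, using non-negativity for the dominating bounds), and handling the fact that the ergodic theorem is stated for the discrete-time chain or the continuous-time process indexed by $t$ — here one uses the continuous-time $V$-geometric mixing already recorded in Theorem \ref{thmVgeom} to get the time-average convergence with the required $\mathbb{L}^p$ rate. None of these steps is deep, but they must be assembled in the right order; the substantive input is entirely the ergodicity from Section \ref{sectionHawkes}.
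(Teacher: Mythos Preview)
Your overall architecture --- split $\mathbb{Y}_T=\mathbb{Y}_T^{(1)}+\mathbb{Y}_T^{(2)}$, peel off the $\tilde N^\alpha$- and $\tilde M^\alpha$-martingale parts, and identify the remaining time-averages via ergodicity --- is exactly right, and the paper does the same thing (it simply packages the first two steps into Lemma~\ref{lemY} and the last into Lemma~\ref{lemErgoHawkes}).

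There is, however, a genuine gap at the step where you claim the integrand is a function of the Markov state. You write
\[
\lambda^\alpha(t,\theta)=\nu_\alpha(X_{t-},\theta)+\sum_\beta\langle A_{\alpha\beta}(\theta)\,|\,\cale_{\alpha\beta}(t-)\rangle,
\]
but this is false for $\theta\neq\theta^*$: the excitation process $\cale_{\alpha\beta}$ in~(\ref{defExcitationElementaire}) is built with the \emph{true} matrices $B_{\alpha\beta}(\theta^*)$ and the \emph{true} boost $g_{\alpha\beta}(\cdot,\theta^*)$, whereas $\lambda^\alpha(t,\theta)$ involves $\int_{[0,t)\times\mathbb X}\langle A_{\alpha\beta}(\theta)\,|\,e^{-(t-s)B_{\alpha\beta}(\theta)}\rangle g_{\alpha\beta}(x,\theta)\,\overline N_\beta(ds,dx)$. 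Unless $B_{\alpha\beta}(\theta)=B_{\alpha\beta}(\theta^*)$ and $g_{\alpha\beta}(\cdot,\theta)=g_{\alpha\beta}(\cdot,\theta^*)$, this integral is \emph{not} a function of $\cale(t-)$, so your functions $\Psi^{(i)}_{\alpha,\theta}$ are not defined on $\E\times\mathbb X$ and the ``ergodic theorem for test functions dominated by $V$'' does not apply directly.

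The paper circumvents this by invoking Lemma~\ref{lemErgoHawkes}, whose proof does \emph{not} argue that the integrand is a function of $Z_{t-}$. Instead it uses the truncation device of Lemma~A.6 in \cite{clinet2017statistical}: one replaces $\lambda^\alpha(t,\theta)$ by the truncated version $\tilde X_1(s,t,\theta)$ built from $\int_{[s,t)\times\mathbb X}$, controls the difference by the uniform exponential decay $|h_{\alpha\beta}(u,x,\theta)|\le Ce^{-ru/2}$ (which holds for all $\theta$ by the uniform lower bound on the real parts of the eigenvalues of $B_{\alpha\beta}(\theta)$), and then uses that the truncated quantity is $\sigma(Z_u:s\le u<t)$-measurable together with the $V$-geometric \emph{mixing} of $Z$ to get the covariance decay~(\ref{eqLLNproof12}) and the mean convergence~(\ref{eqLLNproof13}). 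Once $\pi_\alpha$ and $\chi_\alpha$ are identified this way as stationary expectations under $\overline N'$, the formula for $\mathbb Y(\theta)$ follows immediately. Your proof becomes correct if you replace the ``function of $Z_{t-}$'' step by this truncation-plus-mixing argument.
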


Our final assumption before stating our main result is the direct translation of the identifiability condition $\textbf{[A4]}$, which may be checked using the formula derived in Lemma \ref{lemLimitYHawkes}, depending on the particular parametrization of the different components appearing in (\ref{lambdaGEMHPQLA}).
\bd
\im[[AH3\!\!]] We have $\textnormal{inf}_{\theta \in \Theta - \{\theta^*\}} -\frac{\mathbb{Y}(\theta)}{|\theta - \theta^*|^2} > 0.$
	
\ed

\begin{corollary*} \label{thmFinal}
Assume  \textnormal{\textbf{[AH1]-[AH3]}}, along with conditions \textnormal{\textbf{[L1]-[L3]}} and \textnormal{\textbf{[ND1]-[ND2]}}. Then \textnormal{\textbf{[A1]-[A4]}} are satisfied for any $\gamma \in (0,1/2)$, and in particular we have for the GEMHP that for any continuous function $u$ of polynomial growth, 
$$ \esp[u(\sqrt T (\hat{\theta}_T - \theta^*))] \to \esp[u(\Gamma^{-1/2} \xi)],$$
$$ \esp[u(\sqrt T (\tilde{\theta}_T - \theta^*))] \to \esp[u(\Gamma^{-1/2} \xi)]$$
as $T \to + \infty$, where $\xi$ follows a standard normal distribution, $\Gamma$ is the Fisher information matrix and $\hat{\theta}_T$ and $\tilde{\theta}_T$ are respectively the QMLE and any QBE of the model.
\end{corollary*}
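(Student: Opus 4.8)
The plan is to verify the abstract conditions \textnormal{\textbf{[A1]}}--\textnormal{\textbf{[A4]}} of Section \ref{sectionQLA} for the GEMHP and then invoke Theorem \ref{thmQLA2}; the rate $\gamma$ may be taken anywhere in $(0,1/2)$ precisely because the Markovian skeleton $Z=(\cale,X)$ of Proposition \ref{propGen} is $V$-geometrically mixing by Theorem \ref{thmVgeom}, which in particular also ensures non-explosiveness of $\overline{N}$, so that Definition \ref{defMPP} is met. Condition \textnormal{\textbf{[A1]}} is routine: writing $f^\alpha(t,x,\theta)=\lambda^\alpha(t,\theta)q^\alpha(t,x,\theta)$ with $\lambda^\alpha$ as in (\ref{lambdaGEMHPQLA}) and $q^\alpha(t,x,\theta)=p_\alpha(X_{t-},x,\theta)$ by (\ref{repqparam}), predictability in $(\omega,t,x)$ is clear because $X_{t-}$, the elementary excitations and the convolution integrals defining $\lambda^\alpha$ are left-continuous and adapted; the $C^4$-regularity in $\theta$ with continuous extension to $\overline{\Theta}$ follows from \textnormal{\textbf{[AH1]}}(i) and smoothness of $\theta\mapsto e^{-sB_{\alpha\beta}(\theta)}$; and the coincidence of zero sets is exactly \textnormal{\textbf{[AH1]}}(ii) after this factorisation. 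Likewise \textnormal{\textbf{[A4]}} is immediate once Lemma \ref{lemLimitYHawkes} has identified the limit field $\mathbb{Y}$ of Lemma \ref{lemY} with the explicit functional of $f'^\alpha(0,\cdot,\theta)$: then \textnormal{\textbf{[AH3]}} is literally \textnormal{\textbf{[A4]}}.

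For \textnormal{\textbf{[A2]}}, the inputs are the pointwise bounds of \textnormal{\textbf{[AH2]}} and the moment control attached to the exponential drift function $V(z)=\textnormal{exp}(\sum_{\alpha,\beta}a_{\alpha\beta}\epsilon_{\alpha\beta}+\eta f_X(x))$ of Lemma \ref{lemDrift}. Since $\phi_\alpha$ is linear, $\lambda^\alpha(t,\theta)\ge\nu_\alpha(X_{t-},\theta)$, so the inverse moments in \textnormal{\textbf{[A2]}}(ii) are dominated by those of $\nu_\alpha$, bounded by $C_p e^{\eta f_X(X_{t-})}$ in \textnormal{\textbf{[AH2]}}(i). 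For the direct moments, using that the eigenvalues of $B_{\alpha\beta}(\theta)$ have real part at least $r>0$ uniformly in $\theta$, the derivatives $\partial_\theta^i$ of the kernel $\langle A_{\alpha\beta}(\theta)|e^{-sB_{\alpha\beta}(\theta)}\rangle$ are bounded, uniformly in $\theta$, by $C(1+s)^3 e^{-rs/2}$; hence $|\partial_\theta^i\lambda^\alpha(t,\theta)|$ is dominated, uniformly in $(t,\theta)$, by $|\partial_\theta^i\nu_\alpha(X_{t-},\theta)|$ plus an integral against $\overline{N}_\beta$ of an exponentially decaying kernel weighted by $1+\sup_\theta|g_{\alpha\beta}(\cdot,\theta)|$. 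Taking $\mathbb{L}^p$-norms, the first term is controlled by \textnormal{\textbf{[AH2]}}(i) and $\pi(e^{\eta f_X})<+\infty$, and the second by combining \textnormal{\textbf{[AH2]}}(i), the exponential moment bound (\ref{momentDriftExpo}) and the fact that, under the stationary law, integrals of exponentially decaying kernels against $\overline{N}$ have finite moments of all orders (a consequence of $\pi(V)<+\infty$ and the boundedness of the stationary intensity moments). Uniformity in $t\in\reels_+$ is inherited from the fact that $V$-geometric ergodicity controls $\esp[V(Z_{t-})]$ uniformly in $t$ for any initial law with finite $V$-moment. The $q$-terms \textnormal{\textbf{[A2]}}(iii)--(iv) are handled identically from \textnormal{\textbf{[AH2]}}(ii), since $q^\alpha(t,x,\theta^*)\rho(dx)=p_\alpha(X_{t-},x,\theta^*)\rho(dx)$ and one only has to average over the law of $X_{t-}$.

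The substance of the proof is \textnormal{\textbf{[A3]}}. I would work with the two-sided stationary version $\overline{N}'$ of Corollary \ref{corolVgeom}: for each fixed $\theta$ the processes $s\mapsto\lambda'^\alpha(s,\theta)$, $s\mapsto\partial_\theta\lambda'^\alpha(s,\theta)$ and $s\mapsto\int_\mathbb{X}\partial_\theta^k\textnormal{log}\,q'^\alpha(s,x,\theta)\,q'^\alpha(s,x,\theta^*)\rho(dx)\,\lambda'^\alpha(s,\theta^*)$ are stationary functionals of $\overline{N}'$, so that $\pi_\alpha(\phi,\theta)$ and $\chi_\alpha(k,\theta)$ are their stationary expectations. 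To obtain the $\mathbb{L}^p$-rate $T^\gamma$, I would first truncate the history: because $\|e^{-sB_{\alpha\beta}(\theta)}\|\le Ce^{-rs/2}$ uniformly in $\theta$, each such functional is, up to an error whose $\mathbb{L}^p$-norm is exponentially small in the truncation length, a functional of the path of $Z'$ over a finite window; the $V$-geometric mixing of $Z'$ from Theorem \ref{thmVgeom} then transfers to these bounded-window functionals (the $\beta$-mixing coefficient of $\overline{N}'$ is controlled by that of $Z'$, since $\overline{N}'$ on $[t,+\infty)$ is built from $Z'_t$ and innovations independent of the past, as in Proposition \ref{propGen}). Geometric mixing decays faster than $u^{-\epsilon}$ for every $\epsilon$, in particular for $\epsilon>2\gamma/(1-2\gamma)$ and every $\gamma\in(0,1/2)$, so the criterion \textbf{[M2]} / Lemma 3.16 of \cite{clinet2017statistical} delivers the pointwise-in-$\theta$ $\mathbb{L}^p$-convergences at rate $T^\gamma$. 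Finally I would move $\sup_\theta$ inside the $\mathbb{L}^p$-norm through the Sobolev embedding of Lemma \ref{lemAppliSobolev}: it suffices to establish the above rates pointwise in $\theta$ for the centred time averages \emph{and} for their $\theta$-derivatives, and the required uniform moments of those $\theta$-derivatives are again supplied by \textnormal{\textbf{[AH2]}} together with $\pi(V)<+\infty$. This gives \textnormal{\textbf{[A3]}}, and with \textnormal{\textbf{[A1]}}--\textnormal{\textbf{[A4]}} in hand Theorem \ref{thmQLA2} yields the announced moment convergences of $\hat{\theta}_T$ and $\tilde{\theta}_T$, with $\Gamma$ the Fisher information in the form (\ref{fisherRepresentation}) evaluated along $\overline{N}'$. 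I expect the main obstacle to lie in \textnormal{\textbf{[A3]}}, and within it in the passage from the $V$-geometric mixing of the skeleton $Z$ (built using the kernels $e^{-sB_{\alpha\beta}(\theta^*)}$) to the corresponding mixing and moment bounds for the intensity and mark functionals \emph{at an arbitrary $\theta$}, where the kernel $e^{-sB_{\alpha\beta}(\theta)}$ differs from the one defining $Z$ and no finite-dimensional Markovian representation is available simultaneously for all $\theta$; the truncation of the history, the uniform exponential decay of the kernels, and the exponential moments furnished by $V$ are what make this step go through.
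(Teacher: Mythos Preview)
Your proposal is correct and follows essentially the same route as the paper: verify \textbf{[A1]}--\textbf{[A4]} for the GEMHP and then invoke Theorem \ref{thmQLA2}, with the paper packaging the verification of \textbf{[A2]} and \textbf{[A3]} into Lemmas \ref{lemMomentHawkes} and \ref{lemErgoHawkes} respectively. Your identification of the main obstacle---transferring the $V$-geometric mixing of the skeleton $Z$ at $\theta^*$ to functionals of $\lambda^\alpha(\cdot,\theta)$ at arbitrary $\theta$ via history truncation and the uniform exponential decay of the kernels---is exactly what the paper does through the truncated process $\tilde{X}(s,t,\theta)$ borrowed from Lemma A.6 of \cite{clinet2017statistical}; the only minor difference is that for the moment bounds in \textbf{[A2]}(i) the paper controls integrals against $\overline{N}_\beta$ through the BDG-type inequality of Lemma \ref{lemBDG} followed by Jensen with respect to a tailored probability measure, rather than invoking $\pi(V)<+\infty$ directly.
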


\section{Conclusion}

We have introduced a general parametric framework for multivariate marked point processes observed on the real half line, and given general ergodicity assumptions so that the QMLE and the QBE enjoy asymptotic normality along with convergence of their moments. We have then shown, as a main application, that marked Hawkes processes having generalized exponential kernels in time and marks satisfying among others a Lyapunov condition yield a $V$-geometrically ergodic Markovian system and hence fall under the scope of our statistical framework. Finally, we have illustrated our main assumptions on the marked Hawkes process with the simple case of the queue-reactive Hawkes model. \\

There are some points left to explore, such as what happens to the marked Hawkes process with a more general, non-Markovian kernel, or even non-Markovian marks. Although it would shed more light on the applicability of the QLA method, it would imply a radical change in the way the ergodicity condition (with rate $T^\gamma$ for some $\gamma \in (1/2)$) is proved since the present paper heavily relies on the Markovian representation of the marked point process. We do not pursue further this investigation, which is left for future research.  

\section*{Acknowledgments}
The research of Simon Clinet is supported by Japanese Society for the Promotion of Science Grant-in-Aid for Young Scientists No. 19K13671.

%\bibliography{biblio}

\section{Appendix: Proofs and Technical Results}

\subsection{Proofs of Section \ref{sectionQLA}}
\begin{lemma*} \label{lemBDG}
Let $W $ be a predictable function on $\Omega \times \reels_+ \times \mathbb{X}$. Then for any $\alpha \in \{1,...,d\}$, $p \geq 1$, we have
\beas 
\esp\l| \int_{[0,T] \times \mathbb{X}} W(s,x) \tilde{M}^\alpha(ds,dx)\r|^{2^p} &\leq& C_p \esp \int_{[0,T] \times \mathbb{X}} |W(s,x)|^{2^p} f(s,x,\theta^*) ds \rho(dx) \\&+& C_p \esp \l|\int_{[0,T] \times \mathbb{X}} W(s,x)^2 f(s,x,\theta^*) ds\rho(dx) \r|^{2^{p-1}}, 
\eeas 
where $C_p \geq 0$ depends on $p$ only, whenever the expectations are well defined.
\end{lemma*}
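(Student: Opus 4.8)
The plan is to argue by induction on $p \ge 1$, after a standard localization reduction. Fix $\alpha \in \{1,\dots,d\}$ and set $M_t = \int_{[0,t]\times\mathbb{X}} W(s,x)\,\tilde{M}^\alpha(ds,dx)$, a purely discontinuous local martingale with $M_0 = 0$. We may assume the right-hand side of the asserted inequality is finite, for otherwise there is nothing to prove; replacing $W$ by $W\,\mathbb{1}_{\{|W| \le m\}}\mathbb{1}_{[0,S_k]}$ for a sequence of stopping times $S_k \uparrow \infty$ reducing $M$ and letting $m,k \to \infty$ by Fatou's lemma at the end, we may further assume $M$ is a genuine $\mathbb{L}^2$-martingale on $[0,T]$. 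Since $\nu^\alpha(ds,dx) = f(s,x,\theta^*)\,ds\,\rho(dx)$ has no atoms in time, the jumps of $M$ occur exactly at the jump times of $\overline{N}^\alpha$ and equal the corresponding values of $W$; hence the optional and predictable quadratic variations are $[M]_T = \int_{[0,T]\times\mathbb{X}} W(s,x)^2\,\overline{N}^\alpha(ds,dx)$ and $\langle M\rangle_T = \int_{[0,T]\times\mathbb{X}} W(s,x)^2\,\nu^\alpha(ds,dx) = \int_{[0,T]\times\mathbb{X}} W(s,x)^2 f(s,x,\theta^*)\,ds\,\rho(dx)$. For $p = 1$ the $\mathbb{L}^2$-isometry for integrals against a compensated random measure gives $\esp|M_T|^2 = \esp\langle M\rangle_T$, which is precisely the second term on the right-hand side (with exponent $2^0 = 1$), the first term being nonnegative.

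For the inductive step, assume the inequality holds with exponent $2^{p-1}$ for every predictable integrand. By the Burkholder--Davis--Gundy inequality for purely discontinuous local martingales, applied with exponent $2^p$, one has $\esp|M_T|^{2^p} \le C_p\,\esp[M]_T^{2^{p-1}}$ with $C_p$ depending only on $p$. Decompose $[M]_T = \langle M\rangle_T + R_T$, where $R_T = [M]_T - \langle M\rangle_T = \int_{[0,T]\times\mathbb{X}} W(s,x)^2\,\tilde{M}^\alpha(ds,dx)$ is again a stochastic integral against $\tilde{M}^\alpha$, this time with the predictable integrand $W^2$. By convexity, $\esp[M]_T^{2^{p-1}} \le C(\esp\langle M\rangle_T^{2^{p-1}} + \esp|R_T|^{2^{p-1}})$, and the first summand equals $\esp(\int_{[0,T]\times\mathbb{X}} W^2 f(s,x,\theta^*)\,ds\,\rho(dx))^{2^{p-1}}$, which is the desired second term. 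Applying the induction hypothesis to $R_T$ (exponent $2^{p-1}$, integrand $W^2$) and using $(W^2)^{2^{p-1}} = |W|^{2^p}$ and $(W^2)^2 = W^4$, we obtain $\esp|R_T|^{2^{p-1}} \le C_{p-1}\,\esp\int_{[0,T]\times\mathbb{X}} |W|^{2^p} f(s,x,\theta^*)\,ds\,\rho(dx) + C_{p-1}\,\esp(\int_{[0,T]\times\mathbb{X}} W^4 f(s,x,\theta^*)\,ds\,\rho(dx))^{2^{p-2}}$. The first summand is already the desired first term, so everything reduces to absorbing the cross term $\esp(\int W^4 f\,ds\,\rho)^{2^{p-2}}$ into the two target quantities.

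This absorption is the only genuinely delicate point, and it works because the exponents fit exactly. Write $A = \int_{[0,T]\times\mathbb{X}} |W|^{2^p} f(s,x,\theta^*)\,ds\,\rho(dx)$ and $B = \int_{[0,T]\times\mathbb{X}} W^2 f(s,x,\theta^*)\,ds\,\rho(dx)$. Since $2 \le 4 \le 2^p$, the interpolation inequality for $\mathbb{L}^r$-norms with respect to the (a.s.\ finite) measure $f(s,x,\theta^*)\,ds\,\rho(dx)$ yields $\int W^4 f\,ds\,\rho \le B^{1-\lambda} A^{\lambda}$ with $\lambda = 2/(2^p-2) \in (0,1]$ (for $p = 2$ this is simply the identity $\int W^4 f = A$). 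Raising to the power $2^{p-2}$ and, when $p \ge 3$, applying Young's inequality with the conjugate exponents $r = 2/(1-\lambda)$ and $r' = 2/(1+\lambda)$, one checks by a direct computation --- the key identities being $(1-\lambda)2^{p-2}r = 2^{p-1}$ and $\lambda\,2^{p-2} r' = 1$, both holding precisely because $\lambda = 2/(2^p-2)$ --- that $(\int W^4 f\,ds\,\rho)^{2^{p-2}} \le \tfrac12 B^{2^{p-1}} + A$ pointwise (for $p = 2$ the left side equals $A$ and the bound is trivial). Taking expectations and propagating this estimate back through the chain above gives $\esp|M_T|^{2^p} \le C_p(\esp A + \esp B^{2^{p-1}})$, which is the claim once the truncation is removed by Fatou's lemma and monotone convergence, the bound being uniform in $m$ and $k$. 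The remaining ingredients --- that $[M]$ and $\langle M\rangle$ have the stated integral forms, that the BDG constant may be taken to depend only on $2^p$, and that the compensator measure on $[0,T]\times\mathbb{X}$ is a.s.\ finite --- are standard and use nothing about the model beyond the absolute continuity of $\nu^\alpha$ in time.
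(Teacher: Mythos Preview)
Your proof is correct and follows the same standard induction scheme as the argument the paper invokes from \cite{clinet2017statistical}: apply BDG with exponent $2^p$, split $[M]_T=\langle M\rangle_T+R_T$ with $R_T=\int W^2\,d\tilde M^\alpha$, and feed $R_T$ back into the induction hypothesis. The only cosmetic difference is in the absorption of the cross term $\esp(\int W^4 f)^{2^{p-2}}$: you use $L^r$-norm interpolation plus Young, whereas the cited proof introduces the probability measure $\mu=(\int W^2 f)^{-1}W^2 f\,ds\,\rho(dx)$ and applies Jensen's inequality to $x\mapsto x^{2^{p-1}-1}$, which yields precisely your bound $\int W^4 f\le B^{1-\lambda}A^\lambda$ with $\lambda=2/(2^p-2)$---the two formulations are equivalent.
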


\begin{proof}
This follows from the proof of Lemma A.2 from \cite{clinet2017statistical}, replacing "$f_s$" by "$W(s,x)$", "$\tilde{N}^\alpha$" by "$\tilde{M}^\alpha(ds,dx)$", and "$\lambda(s,\theta^*)ds$" by "$f(s,x,\theta^*)ds\rho(dx)$". Moreover, the probability measure $\mu(dt)$ on $[0,T]$ should be changed to 
$$\mu(dt,dx) = \l(\int_{[0,T] \times \mathbb{X}} W(s,x)^2 f(s,x,\theta^*)ds\rho(dx)\r)^{-1} W(t,x)^2 f(t,x,\theta^*)dt\rho(dx),$$ 
on $[0,T] \times \mathbb{X}$.
\end{proof}

\begin{lemma*} \label{lemAppliSobolev} Let $(V_t(\theta))_{t \in \reels_+, \theta \in \Theta}$ be an $\reels^q$-valued random field for some $q \geq 1$ such that
\begin{enumerate} 
\item for all $t \in \reels_+$, $(V_t(\theta))_{\theta \in \Theta}$ is in $C^1(\Theta)$.
\item for some $p > n$, there exist $\mathbb{V}(\theta)$ and $\mathbb{W}(\theta)$ such that $\sup_{\theta \in \Theta} \|V_t(\theta) - \mathbb{V}(\theta)\|_p \to 0$, and $\sup_{\theta \in \Theta} \|\partial_\theta V_t(\theta) - \mathbb{W}(\theta)\|_p \to 0$.
\end{enumerate}
Then, $  \| \sup_{\theta  \in \Theta}|V_t(\theta) - \mathbb{V}(\theta)|\|_p \to 0.$
\end{lemma*}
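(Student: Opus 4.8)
The plan is to invoke the Sobolev embedding theorem on the convex, bounded, sufficiently regular domain $\Theta \subset \reels^n$, which for $p > n$ gives a continuous embedding $W^{1,p}(\Theta) \hookrightarrow C^0(\overline{\Theta})$, hence a constant $C = C(\Theta, p, n)$ such that for any $g \in W^{1,p}(\Theta)$ one has $\sup_{\theta \in \Theta} |g(\theta)| \leq C \big( \|g\|_{L^p(\Theta)} + \|\nabla g\|_{L^p(\Theta)} \big)$. I would apply this pathwise (in $\omega$) to the random field $g = g_t := V_t(\cdot) - \mathbb{V}(\cdot)$, which is indeed in $C^1(\Theta)$ by hypothesis 1 (and $\mathbb{V}$ inherits $C^1$ regularity as a $\mathbb{L}^p$-limit of $C^1$ fields together with their derivatives, again via the same embedding, so the difference is a legitimate Sobolev function).

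The key steps, in order: first, record the deterministic Sobolev inequality above. Second, apply it to $g_t(\theta) = V_t(\theta) - \mathbb{V}(\theta)$, raise to the power $p$, use the elementary bound $(a+b)^p \leq 2^{p-1}(a^p + b^p)$, and take expectations to obtain
\beas
\esp\Big[ \sup_{\theta \in \Theta} |V_t(\theta) - \mathbb{V}(\theta)|^p \Big] \leq 2^{p-1} C^p \Big( \esp \int_\Theta |V_t(\theta) - \mathbb{V}(\theta)|^p d\theta + \esp \int_\Theta |\partial_\theta V_t(\theta) - \mathbb{W}(\theta)|^p d\theta \Big).
\eeas
Third, apply the Fubini--Tonelli theorem to interchange $\esp$ and $\int_\Theta$ on the right-hand side, writing it as $\int_\Theta \|V_t(\theta) - \mathbb{V}(\theta)\|_p^p \, d\theta + \int_\Theta \|\partial_\theta V_t(\theta) - \mathbb{W}(\theta)\|_p^p \, d\theta$. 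Fourth, bound each integrand by its supremum over $\theta \in \Theta$ and use the boundedness $|\Theta| < \infty$, so the right-hand side is at most $|\Theta| \big( \sup_{\theta} \|V_t(\theta) - \mathbb{V}(\theta)\|_p^p + \sup_{\theta} \|\partial_\theta V_t(\theta) - \mathbb{W}(\theta)\|_p^p \big)$, which tends to $0$ by hypothesis 2. Taking $p$-th roots yields $\| \sup_{\theta \in \Theta} |V_t(\theta) - \mathbb{V}(\theta)| \|_p \to 0$, the claim; and since the embedding is into the continuous functions, $\sup_{\theta \in \Theta}$ coincides with $\sup_{\theta \in \overline\Theta}$ so measurability of the supremum is not an issue.

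The main obstacle I anticipate is not analytic but bookkeeping: one must ensure that $V_t(\cdot) - \mathbb{V}(\cdot)$ genuinely defines an element of $W^{1,p}(\Theta)$ almost surely, i.e. that $\mathbb{V}$ and $\mathbb{W}$ are well behaved — in particular that $\mathbb{W} = \partial_\theta \mathbb{V}$ in the weak sense so that $(\nabla g_t) = \partial_\theta V_t - \mathbb{W}$ is the correct distributional gradient. This follows because $L^p$ convergence of $V_t$ and of $\partial_\theta V_t$ forces, by applying the Sobolev inequality to differences $V_t - V_s$, that $(V_t)$ is Cauchy in $C^0$ hence $\mathbb{V} \in C^0$; an analogous argument on the derivatives plus the closedness of the weak-derivative operator identifies $\mathbb{W}$ with $\partial_\theta \mathbb{V}$. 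Once this identification is in place the rest is the routine Fubini-and-bound argument above. A minor point worth stating explicitly is that the footnote in the paper already assumes $\Theta$ is regular enough for the Sobolev embedding theorem (\cite{adams2003sobolev}, Theorem 4.12), so the embedding constant $C$ is available without further justification.
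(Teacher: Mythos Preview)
Your proposal is correct and rests on the same key ingredient as the paper: the Sobolev embedding $W^{1,p}(\Theta) \hookrightarrow C^0(\overline{\Theta})$ for $p > n$, followed by taking expectations, Fubini, and bounding the integral over $\Theta$ by $|\Theta|$ times the supremum. The only difference is the order of operations. You apply Sobolev directly to $V_t - \mathbb{V}$, which forces you to first verify that $\mathbb{V}$ is a legitimate $W^{1,p}$ function with weak gradient $\mathbb{W}$; you correctly identify this bookkeeping issue and propose to resolve it via a Cauchy argument on $V_t - V_s$. The paper simply \emph{starts} from that Cauchy argument: it applies the Sobolev inequality to $V_T - V_{T'}$ (both terms $C^1$ by hypothesis, so no regularity of the limit is needed), obtains that $(V_T)_T$ is Cauchy for the norm $\|\sup_\theta |\cdot|\|_p$, and then identifies the limit with $\mathbb{V}$ by uniqueness of $L^p$-limits. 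This avoids having to separately establish $\partial_\theta \mathbb{V} = \mathbb{W}$ and is marginally shorter, but the substance is the same.
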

\begin{proof}

Applying Sobolev's inequality (Theorem 4.12 from \cite{adams2003sobolev} part I, Case A, $j=0$, $m=1$, $p > n$) for $T,T' \in \reels_+$, we have 
\beas 
\esp \sup_{\theta \in \Theta} |V_T(\theta) - V_{T'}(\theta)|^p &\leq&  K(\Theta,p)\l(\int_{\Theta}{d\theta \esp\l|V_T(\theta) - V_{T'}(\theta) \r|^p}+\int_{\Theta}{d\theta \esp\l|\partial_\theta V_T(\theta)-\partial_\theta V_{T'}(\theta)\r|^p} \r)\\
&\leq& K(\Theta,p) \textnormal{diam}(\Theta) \l(\sup_{\theta \in \Theta} \esp |V_T(\theta) - V_{T'}(\theta)|^p + \sup_{\theta \in \Theta} \esp |\partial_\theta V_T(\theta) -  \partial_\theta V_{T'}(\theta)|^p\r),
\eeas 
and this tends to $0$ as $T,T' \to +\infty$, so that $V_T$ (seen as a sequence indexed by $T$) is a Cauchy sequence for the norm $\esp \sup_{\theta \in \Theta} |\cdot|$, and hence converges for this norm to a limit $V$. Of course, we necessarily have $V  = \mathbb{V} $.

\end{proof}

\begin{lemma*} \label{lemBDGmartingale}
Let $(M_t(\theta))_{t \in \reels_+, \theta \in \Theta}$ be an $\reels^q$-valued random field for some $q \geq 1$, and let $p >n$ be such that 
\begin{enumerate}
    \item for any $\theta \in \Theta$, $(M_t(\theta))_{t \in \reels_+}$ is an $\mathbb{L}^p$ integrable martingale.
    \item for any $t \in \reels_+$, $(M_t(\theta))_{\theta \in \Theta}$ is in $C^1(\Theta)$.
\end{enumerate}
Then for any $\theta \in \Theta$, $(\partial_\theta M_t(\theta))_{t \in \reels_+}$ is again an $\mathbb{L}^p$ integrable martingale, and for any $T \in \reels_+$, there exists a positive constant $C(p,\Theta)$ such that
\beas 
\esp \l[\sup_{\theta \in \Theta}|M_T(\Theta)|^p\r] \leq C(p,\Theta)\l( \sup_{\theta \in \Theta} \esp \l[ M_T(\theta), M_T(\theta)\r]^{p/2} +  \sup_{\theta \in \Theta}\esp \l[ \partial_\theta M_T(\theta), \partial_\theta M_T(\theta)\r]^{p/2}\r),
\eeas 
where $[\cdot,\cdot]$ is the quadratic variation operator.
\end{lemma*}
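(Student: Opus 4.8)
\emph{Proof proposal.} The bound will be obtained from three ingredients, applied in this order: first, that for each fixed $\theta$ the process $(\partial_\theta M_t(\theta))_{t\in\reels_+}$ is again an $\mathbb{L}^p$-integrable martingale; second, the Sobolev embedding inequality (used exactly as in the proof of Lemma~\ref{lemAppliSobolev}) to pass from $\esp\l[\sup_{\theta\in\Theta}|M_T(\theta)|^p\r]$ to the $\theta$-pointwise $\mathbb{L}^p$-moments of $M_T$ and $\partial_\theta M_T$; third, the Burkholder--Davis--Gundy inequality to replace those moments by the corresponding quadratic-variation moments. There is nothing to prove when the right-hand side is infinite, so I would work throughout under the assumption that $\sup_{\theta\in\Theta}\esp\l[M_T(\theta),M_T(\theta)\r]^{p/2}$ and $\sup_{\theta\in\Theta}\esp\l[\partial_\theta M_T(\theta),\partial_\theta M_T(\theta)\r]^{p/2}$ are both finite.

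For the first ingredient, fix $\theta\in\Theta$, a coordinate direction $e_j$, and $\delta>0$ with $\overline{B}(\theta,\delta)\subset\Theta$. For $0<|h|<\delta$ the difference quotient $D_h:=h^{-1}\l(M_\cdot(\theta+he_j)-M_\cdot(\theta)\r)$ is, by linearity, an $\mathbb{L}^p$-integrable martingale satisfying $\esp[D_h(t)|\calf_s]=D_h(s)$ for $s\le t$, and by the $C^1$-regularity assumption $D_h(t)\to\partial_{\theta_j}M_t(\theta)$ almost surely as $h\to0$, for each fixed $t$. The mean value theorem gives $D_h(t)=\partial_{\theta_j}M_t(\theta+\tau_h h e_j)$ for some random $\tau_h\in(0,1)$, hence $|D_h(t)|\le\sup_{\theta'\in\overline{B}(\theta,\delta)}|\partial_\theta M_t(\theta')|$; once this majorant is known to lie in $\mathbb{L}^1$, dominated convergence promotes the convergence to $\mathbb{L}^1$, the martingale identity passes to the limit, and the $\mathbb{L}^p$-integrability of $\partial_\theta M_T(\theta)$ follows from Burkholder--Davis--Gundy together with the standing finiteness assumption. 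This is the only non-routine step, and its single subtlety is precisely the integrability of that majorant: $C^1$-regularity of a \emph{single path} only makes $\sup_{\overline{B}(\theta,\delta)}|\partial_\theta M_t|$ finite almost surely, not in $\mathbb{L}^1$. In the situations where this lemma is invoked, $M$ is a stochastic integral against $\tilde{M}^\alpha$ or $\tilde{N}^\alpha$ of a predictable integrand that is smooth in $\theta$, so $\partial_\theta M$ is the stochastic integral of the $\theta$-derivative of that integrand and the required $\mathbb{L}^p$-control of $\sup_{\theta\in\Theta}|\partial_\theta M_t|$ is furnished by the uniform moment bounds of \textbf{[A2]} (resp.\ \textbf{[AH2]}) via Lemma~\ref{lemBDG}; abstractly it amounts to the harmless extra requirement that $\sup_{\theta\in\Theta}|\partial_\theta M_t(\theta)|\in\mathbb{L}^p$ for each $t$.

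It remains to assemble the estimate. Since, by Burkholder--Davis--Gundy, $\sup_{\theta\in\Theta}\esp|M_T(\theta)|^p<+\infty$ and $\sup_{\theta\in\Theta}\esp|\partial_\theta M_T(\theta)|^p<+\infty$, Fubini shows that $\theta\mapsto M_T(\theta)$ belongs to $W^{1,p}(\Theta)$ for almost every $\omega$; applying the Sobolev inequality (\cite{adams2003sobolev}, Theorem~4.12, Case~A, $j=0$, $m=1$, $p>n$) pathwise and integrating, exactly as in the proof of Lemma~\ref{lemAppliSobolev}, gives
\beas
\esp\l[\sup_{\theta\in\Theta}|M_T(\theta)|^p\r]\le K(\Theta,p)\,\textnormal{diam}(\Theta)\l(\sup_{\theta\in\Theta}\esp|M_T(\theta)|^p+\sup_{\theta\in\Theta}\esp|\partial_\theta M_T(\theta)|^p\r).
\eeas
Finally, applying Burkholder--Davis--Gundy to the $\mathbb{L}^p$-martingales $M_\cdot(\theta)$ and $\partial_\theta M_\cdot(\theta)$ bounds the two suprema on the right by $C_p\sup_{\theta\in\Theta}\esp\l[M_T(\theta),M_T(\theta)\r]^{p/2}$ and $C_p\sup_{\theta\in\Theta}\esp\l[\partial_\theta M_T(\theta),\partial_\theta M_T(\theta)\r]^{p/2}$ respectively, which yields the claim with $C(p,\Theta)=C_p\,K(\Theta,p)\,\textnormal{diam}(\Theta)$; this constant depends only on $p$ and $\Theta$ because both the Sobolev constant and the Burkholder--Davis--Gundy constant are independent of $T$. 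As indicated, the main obstacle is not this final assembly but the preliminary step that $\partial_\theta M_\cdot(\theta)$ is a martingale, and more precisely securing an $\mathbb{L}^1$ majorant for the difference quotients.
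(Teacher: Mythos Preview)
Your proposal is correct and follows essentially the same route as the paper: the paper's proof also invokes Sobolev's inequality (Theorem~4.12 of \cite{adams2003sobolev}, Case~A, $j=0$, $m=1$, $p>n$) followed by Burkholder--Davis--Gundy, and dismisses the martingale property of $\partial_\theta M$ as ``an easy consequence of the dominated convergence theorem.'' You have in fact been more careful than the paper on this last point, correctly identifying that an $\mathbb{L}^1$ majorant for the difference quotients is needed and is supplied in the applications by \textbf{[A2]}/\textbf{[AH2]}.
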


\begin{proof}
The fact that $(\partial_\theta M_t(\theta))_{t \in \reels_+}$ is an $\mathbb{L}^p$ integrable martingale is an easy consequence of the dominated convergence theorem. Applying again Sobolev's inequality (Theorem 4.12 from \cite{adams2003sobolev} part I, Case A, $j=0$, $m=1$, $p > n$) and then Burkholder-Davis-Gundy inequality yields
\beas 
\esp \sup_{\theta \in \Theta} |M_T(\theta) |^p &\leq&  K(\Theta,p)\l(\int_{\Theta}{d\theta \esp\l|M_T(\theta)  \r|^p}+\int_{\Theta}{d\theta \esp\l|\partial_\theta M_T(\theta)\r|^p} \r)\\
&\leq&  K(\Theta,p) \textnormal{diam}(\Theta) \l( \sup_{\theta \in \Theta} \esp \l[ M_T(\theta), M_T(\theta)\r]^{p/2} +  \sup_{\theta \in \Theta}\esp \l[ \partial_\theta M_T(\theta), \partial_\theta M_T(\theta)\r]^{p/2}\r).
\eeas 
\end{proof}

\begin{proof}[Proof of Lemma \ref{lemY}]
By Assumption \textbf{[A1]}-\textbf{[A3]} and Lemma 3.15 from \cite{clinet2017statistical}, there exists $\mathbb{Y}^{(1)}(\theta) $ such that $T^\gamma\|\sup_{\theta \in \Theta} |\mathbb{Y}_T^{(1)}(\theta) - \mathbb{Y}^{(1)}(\theta)| \|_p \to 0$, therefore we only need to prove the existence of $\mathbb{Y}^{(2)}(\theta)$ such that  $T^\gamma \|\sup_{\theta \in \Theta} |\mathbb{Y}_T^{(2)}(\theta) - \mathbb{Y}^{(2)}(\theta)| \|_p \to 0$ for any $p \geq 1$. Let us define 
$$\tilde{\mathbb{Y}}_T^{(2)}(\theta) = T^{-1}\sum_{\alpha=1}^d\int_{[0,T] \times \mathbb{X}} \textnormal{log} \frac{q^\alpha(s,x,\theta)}{q^\alpha(s,x,\theta^*)} q^\alpha(s,x,\theta^*) \lambda^\alpha(s,\theta^*)ds \rho(dx),$$
and prove that 
\bea \label{convY2Ytilde2}
T^\gamma \|\sup_{\theta \in \Theta} |\mathbb{Y}_T^{(2)}(\theta)- \tilde{\mathbb{Y}}_T^{(2)}(\theta)| \|_p \to 0. 
\eea 
Note that 
\bea \label{defYtilde2} 
\mathbb{Y}_T^{(2)}(\theta)- \tilde{\mathbb{Y}}_T^{(2)}(\theta) = T^{-1}\sum_{\alpha=1}^d\int_{[0,T] \times \mathbb{X}} \textnormal{log} \frac{q^\alpha(s,x,\theta)}{q^\alpha(s,x,\theta^*)} \tilde{M}^\alpha(ds,dx).
\eea
Without loss of generality, we can assume that $p = 2^q$ for some $q \geq 1$, and that $p >n = \textnormal{dim}(\Theta)$. Next, by \textbf{[A2]}, we know that $S_T^\alpha = \int_{[0,T] \times \mathbb{X}} \textnormal{log} \frac{q^\alpha(s,x,\theta)}{q^\alpha(s,x,\theta^*)} \tilde{M}^\alpha(ds,dx)$ is an $\mathbb{L}^p$-integrable martingale, so that by Lemma \ref{lemBDGmartingale}, we will have 
				
		        \bea \label{ineqSobolevS}
					\esp \sup_{\theta \in \Theta}\l| \frac{S_T^\alpha(\theta)}{T} \r|^p = o\l(T^{-\gamma p}\r)
				\eea
				as soon as we show $\sup_{\theta \in \Theta} \esp \l[\partial_\theta^i S_T^\alpha(\theta),\partial_\theta^i S_T^\alpha(\theta)\r]^{p/2} = o(T^{-\gamma p})$ for $i \in \{0,1\}$ where $[\cdot,\cdot]$ denotes the quadratic variation operator (the fact that $\partial_\theta S_T^\alpha$  is again an $\mathbb{L}^p-$integrable martingale is an easy consequence of the dominated convergence theorem). Now, by application of Lemma \ref{lemBDG} along with Jensen's inequality with respect to the measure $T^{-1}q^\alpha(s,x,\theta^*)ds\rho(dx)$, the fact that $|\textnormal{log} x| \leq |x| + |x|^{-1} $, and finally assumption \textbf{[A2]}, we have for some constant $C>0$ that may change from one line to the next
				\beas
				\esp \l[ S_T^\alpha(\theta),  S_T^\alpha(\theta)\r]^{p/2} &\leq & C \esp \l(\int_{[0,T] \times \mathbb{X}}{\l|\textnormal{log}\frac{q^\alpha(s,x,\theta)}{q^\alpha(s,x,\theta^*)}\r|^2f^\alpha(s,x,\theta^*)\mathbb{1}_{\{\lambda^\alpha(s,\theta^{*}) \neq 0\}}ds\rho(dx)} \r)^{\frac{p}{2}} \\
				&+& C \esp \int_{[0,T] \times \mathbb{X}} {\l|\textnormal{log}\frac{q^\alpha(s,x,\theta)}{q^\alpha(s,x,\theta^*)}\r|^p f^\alpha(s,x,\theta^*)\mathbb{1}_{\{\lambda^\alpha(s,\theta^{*}) \neq 0\}}  ds\rho(dx)}\\
				&\leq& C T^{\frac{p}{2}-1} \esp \int_{[0,T] \times \mathbb{X}}{\l|\textnormal{log}\frac{q^\alpha(s,x,\theta)}{q^\alpha(s,x,\theta^*)}\r|^p \lambda^\alpha(s,\theta^*) ^{\frac{p}{2}}\mathbb{1}_{\{\lambda^\alpha(s,\theta^{*}) \neq 0\}}q^\alpha(s,x,\theta^*)ds\rho(dx)} \\
				&+& C \esp \int_{[0,T] \times \mathbb{X}} {\l|\textnormal{log}\frac{q^\alpha(s,x,\theta)}{q^\alpha(s,x,\theta^*)}\r|^p f^\alpha(s,x,\theta^*)\mathbb{1}_{\{\lambda^\alpha(s,\theta^{*}) \neq 0\}}  ds\rho(dx)}\\
				&=& O\l(T^{\frac{p}{2}}\r), 
				\eeas

				and 
				\beas 
				\esp \l[\partial_\theta^i S_T^\alpha(\theta),\partial_\theta^i S_T^\alpha(\theta)\r]^{p/2}&\leq& C\esp \l(\int_{[0,T] \times \mathbb{X}}{\partial_\theta\textnormal{log} q^\alpha(s,x,\theta)^2f^\alpha(s,x,\theta^*)\mathbb{1}_{\{\lambda^\alpha(s,\theta^{*}) \neq 0\}}ds\rho(dx)}\r)^{\frac{p}{2}}\\
				&+& C\esp \int_{[0,T] \times \mathbb{X}}{\l|\partial_\theta\textnormal{log} q^\alpha(s,x,\theta)\r|^pf^\alpha(s,x,\theta^*)\mathbb{1}_{\{\lambda^\alpha(s,\theta^{*}) \neq 0\}}ds\rho(dx)}\\
				&\leq& C T^{\frac{p}{2}-1}\esp \int_{[0,T] \times \mathbb{X}}{\l|\partial_\theta\textnormal{log} q^\alpha(s,x,\theta)\r|^p \lambda^\alpha(s,\theta^*) ^{\frac{p}{2}}\mathbb{1}_{\{\lambda^\alpha(s,\theta^{*}) \neq 0\}}q^\alpha(s,x,\theta^*)ds\rho(dx)}\\
				&+& C\esp \int_{[0,T] \times \mathbb{X}}{\l|\partial_\theta\textnormal{log} q^\alpha(s,x,\theta)\r|^pf^\alpha(s,x,\theta^*)\mathbb{1}_{\{\lambda^\alpha(s,\theta^{*}) \neq 0\}}ds\rho(dx)}\\
				  &=& O\l(T^{\frac{p}{2}}\r),
				\eeas
so that by (\ref{ineqSobolevS}) we get $\esp \sup_{\theta \in \Theta}\l| \frac{S_T^\alpha(\theta)}{T} \r|^p = O(T^{-p/2}) = o(T^{-\gamma p})$ since $\gamma <1/2$. Combined with (\ref{defYtilde2}), this readily gives (\ref{convY2Ytilde2}). Finally, applying Fubini's theorem to $\tilde{\mathbb{Y}}^{(2)}_T(\theta)$ along with \textbf{[A3]} shows the existence of $\mathbb{Y}^{(2)}(\theta)$ such that $T^\gamma \sup_{\theta \in \Theta} \| \tilde{\mathbb{Y}}_T^{(2)}(\theta) - \mathbb{Y}^{(2)}(\theta) \|_p \to 0$, which, by Lemma \ref{lemAppliSobolev} can be strengthened to $T^\gamma\|  \sup_{\theta \in \Theta} | \tilde{\mathbb{Y}}_T^{(2)}(\theta) - \mathbb{Y}^{(2)}(\theta) | \|_p \to 0$ since we also have by \textbf{[A3]} that $T^\gamma \sup_{\theta \in \Theta} \| \partial_\theta \tilde{\mathbb{Y}}_T^{(2)}(\theta) - \mathbb{U}(\theta) \|_p \to 0$ for some field $\mathbb{U}(\theta)$. This yields $T^\gamma \|\sup_{\theta \in \Theta} |\mathbb{Y}_T^{(2)}(\theta) - \mathbb{Y}^{(2)}(\theta)| \|_p \to 0$ by (\ref{convY2Ytilde2}).

\end{proof}

\begin{proof}[Proof of Lemma \ref{lemGamma}]
We prove the existence for $i \in \{1,2\}$ of $\Gamma^{(i)}$ such that $\sup_{\theta \in B_T} |\Gamma_T^{(i)}(\theta) - \Gamma^{(i)}| \to^\proba 0$ and $T^\gamma \l\| \Gamma_T^{(i)}(\theta^*) - \Gamma^{(i)} \r\|_{p} \to 0$. The case $i=1$ is a consequence of Lemma 3.12 and Lemma 3.15 in \cite{clinet2017statistical} along with assumptions \textbf{[A1]}-\textbf{[A4]}. We now turn to the case $i=2$. It is convenient to rewrite $\Gamma_T^{(2)}(\theta) = \sum_{\alpha=1}^d \l[\tilde{\Gamma}_T^{(2),\alpha} + M_T^{(2),\alpha}(\theta) +   R_T^{(2),\alpha}(\theta)\r]$ with 
\beas
\tilde{\Gamma}_T^{(2),\alpha} &=&  T^{-1} \int_{[0,T] \times \mathbb{X}}  \partial_\theta^2 \textnormal{log}q^\alpha(t,x, \theta^*)  q^\alpha(t,x,\theta^*)\rho(dx) \lambda^\alpha(s,\theta^*) ds ,
\eeas
\beas  
M_T^{(2),\alpha}(\theta) &=&- T^{-1} \int_{[0,T] \times \mathbb{X}} {\partial_\theta^2 \textnormal{log} q^\alpha(s,\theta)1_{\{q^\alpha(s,\theta^{*}) \neq 0\}}\tilde{M}^\alpha(ds,dx)}
\eeas 
and
\beas 
R_T^{(2),\alpha}(\theta) = - T^{-1} \int_{[0,T] \times \mathbb{X}} \{\partial_\theta^2 \textnormal{log} q^\alpha(s,\theta)  - \partial_\theta^2 \textnormal{log} q^\alpha(s,\theta^*)\} \mathbb{1}_{\{q^\alpha(s,x,\theta^{*}) \neq 0\}} \rho(dx)\lambda^\alpha(s,\theta^*)ds.\\
\eeas
% noting that 
% $$\int_{[0,T] \times \mathbb{X}} \partial_{\theta}^2 q^\alpha(s,x,\theta)\rho(dx)ds = \partial_{\theta}^2 \underbrace{\int_{[0,T] \times \mathbb{X}}  q^\alpha(s,x,\theta)\rho(dx)ds}_{=1} = 0.$$
Next, remark that there exists $\Gamma^{(2)}$ such that $T^\gamma\| \sup_{\theta \in \Theta} |\tilde{\Gamma}_T^{(2),\alpha} - \Gamma^{(2)}| \|^p \to 0$ by $\textbf{[A3]}$ and Lemma \ref{lemAppliSobolev}. Now we have
$$ \esp \sup_{\theta \in B_T}|M_T^{(2),\alpha}(\theta)|^p  \leq \esp \sup_{\theta \in \Theta}|M_T^{(2),\alpha}(\theta)|^p = O(T^{-p/2}) = o(T^{-\gamma p})$$ 
where the last inequality is a consequence of Lemma \ref{lemBDGmartingale} and \textbf{[A2]}. Since $R_T^{(2),\alpha}(\theta^*) = 0$, we have proved the second claim of the lemma. Now, using again that $R_T^{(2),\alpha}(\theta^*) = 0$, we have for $\theta \in B_T$
\beas 
|R_T^{(2),\alpha}(\theta)| &\leq& |\theta - \theta^*| \sup_{\theta \in \Theta} |\partial_{\theta}R_T^{(2),\alpha}(\theta)| \\
&\leq& \textnormal{diam}(B_T)  \sup_{\theta \in \Theta} |\partial_{\theta}R_T^{(2),\alpha}(\theta)|
\eeas 
and noticing that the derivative of the integrand in $R_T^{(2),\alpha}(\theta)$ is proportional to $\partial_\theta^3 \textnormal{log} q^\alpha(s,\theta)$, application of \textbf{[A2]} readily gives the domination $\esp \sup_{\theta \in \Theta} |\partial_{\theta}R_T^{(2),\alpha}(\theta)|^p \leq K$ for some $K>0$, which, combined with the  convergence $\textnormal{diam}(V_T) \to^\proba 0$, yields that $\sup_{\theta \in B_T} |R_T^{(2),\alpha}(\theta)| \to^\proba 0$, hence the first claim of the lemma. \\
\end{proof}

\begin{proof}[Proof of Lemma \ref{lemDelta}]
 Recall that $M^{T}$ is (by \textbf{[A2]}) an $\mathbb{L}^p$ integrable martingale with the following representation: 
\bea
M_u^{T} = \frac{1}{\sqrt T} \sum_{\alpha = 1}^d\int_{[0,uT] \times \mathbb{X}} \frac{\partial_\theta f^\alpha(t,x, \theta^*)}{f^\alpha(t,x,\theta^*)} \mathbb{1}_{\{f^\alpha(t,x,\theta^*) \neq 0\}} \tilde{M}^\alpha(dt,dx).
\eea
Accordingly, by Corollary VIII.3.24 p.476 from \cite{JacodLimit2003}, the claimed limit theorem will hold if we show $\langle M^T, M^T\rangle_u \to^\proba u \Gamma$ for all $u \in [0,1]$, and for some $\epsilon >0$, the Lindeberg condition 
\bea 
L_T = \frac{1}{T^2}\sum_{\alpha = 1}^d \int_{[0,T] \times \mathbb{X}} \frac{\partial_\theta f^\alpha(t,x, \theta^*)^{\otimes 2}}{f^\alpha(t,x,\theta^*)^2} \mathbb{1}_{\l\{ \l|\frac{\partial_\theta f^\alpha(t,x, \theta^*)}{f^\alpha(t,x,\theta^*)}\r| \geq \epsilon\sqrt{T} \r\}}  \mathbb{1}_{\{f^\alpha(t,x,\theta^*) \neq 0\}}  \nu^\alpha(dt,dx) \to^\proba 0. 
\eea 
Remark that by the ergodicity assumption \textbf{[A3]} (decomposing $f^\alpha = \lambda^\alpha \cdot q^\alpha$) and the fact that $\Delta N^\alpha \Delta N^\beta = 0$ for $\alpha \neq \beta$, 
\beas 
\langle M^{T}, M^T \rangle_u &=& \frac{1}{T} \sum_{\alpha=1}^d \int_{[0,uT] \times \mathbb{X}} \frac{\partial_\theta f^\alpha(t,x, \theta^*)^{\otimes 2}}{f^\alpha(t,x,\theta^*)} \mathbb{1}_{\{f^\alpha(t,x,\theta^*) \neq 0\}}  dt \rho(dx)  \to^\proba u \Gamma, 
\eeas
where the expression of the limit comes from (\ref{fisherRepresentation}). Moreover,
\beas 
\esp |L_T| \leq \frac{1}{T^{3/2} \epsilon} \esp \sum_{\alpha = 1}^d \int_{[0,T] \times \mathbb{X}} \frac{\partial_\theta f^\alpha(t,x, \theta^*)^{\otimes 2}}{f^\alpha(t,x,\theta^*)} \l|\frac{\partial_\theta f^\alpha(t,x, \theta^*)}{f^\alpha(t,x,\theta^*)}\r|  \mathbb{1}_{\{f^\alpha(t,x,\theta^*) \neq 0\}} dt \rho(dx) = O(T^{-1/2})
\eeas 
by \textbf{[A2]} and H\"{o}lder's inequality. Finally the second claim is an application of Burkholder-Davis-Gundy inequality along with H\"{o}lder's inequality and \textbf{[A2]}.
\end{proof}

\begin{proof}[Proof of Theorem \ref{thmLAN}]
We first prove \textbf{(i)}. The large deviation inequality is a consequence of Theorem 3 (c) in \cite{YoshidaPolynomial2011}: Setting $\beta_1 = \gamma$, $\beta_2 = 1/2- \gamma$, $\rho =2$, $\rho_2 \in (0,2\gamma)$, $\alpha\in (0,\rho_2/2)$ and $\rho_1 \in (0, \textnormal{min}(1,\alpha(1-\alpha)^{-1},2\gamma(1-\alpha)^{-1}))$, we immediately have condition (A4$'$). Moreover, (A6) and (A4$''$) are satisfied by Lemmas \ref{lemY}, \ref{lemGamma}, and \ref{lemDelta} along with the domination for all $p>1$
\bea \label{ineql3} 
\sup_{T \in \reels_+}\l\| T^{-1}\sup_{\theta \in \Theta} \l|\partial_{\theta}^3 l_T(\theta)\r|\r\|_p < +\infty
\eea 
which is yet another application of \textbf{[A2]}, Sobolev's inequality (involving the derivative of fourth order of $l_T$) and H\"{o}lder's inequality. Finally, conditions (B1) and (B2) are consequences of \textbf{[A3]} along with Remark \ref{rmkGamma}. Next we show \textbf{(ii)}. Rewrite 
$$ \textnormal{log}\mathbb{Z}_T(u) = u^T \Delta_T + u^T \Gamma_T(\theta^*) u + r_T(u) $$
where, by the mean value theorem, there exists $ \theta_u = w_u \theta^* + (1-w_u) (\theta^* + u/\sqrt{T})$ for some $w_u \in [0,1]$ such that 
$$r_T(u) = T^{-3/2}\partial_{\theta}^3 l_T(\theta_u)[u^{\otimes 3}]:= T^{-3/2} \sum_
{i,j,k=1}^{n} \partial_{\theta}^3 l_T(\theta_u)_{ijk} u_iu_ju_k.$$
For a fixed $u \in \reels^n$, an application of (\ref{ineql3}) immediately yields for any $p >1$ that $\esp |r_T(u)|^p \to 0$, which, combined with Lemmas \ref{lemGamma} and \ref{lemDelta}, the continuous mapping theorem, and Slutsky's lemma readily yields the finite dimensional convergence of $\mathbb{Z}_T$ towards $\mathbb{Z}$. Now we prove the tightness of $\textnormal{log }\mathbb{Z}_T$ in $T$. introducing for $\delta >0$, $r >0$, 
$$ w_T(\delta,r) = \sup_{u_1, u_2 \in B_{r,T}, |u_2-u_1|\leq \delta } |\textnormal{log} \mathbb{Z}_T(u_2) - \textnormal{log} \mathbb{Z}_T(u_1)|,$$
where $B_{r,T} = \{ u \in U_T| |u| \leq r\}$, it is sufficient to prove for some $p >n$, any $r>0$ the convergence $\lim_{\delta \to0}\sup_{T \in \reels_+} \esp[w_{T}(\delta,r)^p ] \to 0$ by \textbf{(i)}. We have 
\bea 
\nonumber \esp[w_{T}(\delta,r)^p ] &\leq& \esp \sup_{u_1, u_2 \in B_{r,T}, |u_2-u_1|\leq \delta } |l_T(\theta_{u_2}) - l_T(\theta_{u_1})|^p \\
&\leq& K \delta^p T^{-p/2} \esp \sup_{\theta \in \Theta} |\partial_\theta l_T(\theta)|^p \to 0 \label{eqmodulus}
\eea 
where the last convergence is again an application of Lemma \ref{lemBDG}, \textbf{[A2]}, and H\"{o}lder's inequality. Finally, \textbf{(iii)} is a consequence of Lemma 2 from \cite{YoshidaPolynomial2011} (taking $K(u) = \textnormal{log} \mathbb{Z}_T(u)$)   
along with (\ref{eqmodulus}).
\end{proof}

\begin{proof}[Proof of Theorem \ref{thmQLA2}]
For the QMLE, the convergence is a consequence of the LAN property and the large deviation inequality along with Theorem 4 (b) from \cite{YoshidaPolynomial2011}. Moreover, applying Theorem 8 in \cite{YoshidaPolynomial2011} along with the large deviation inequality and the inverse moment condition yields the convergence for any QBE.  
\end{proof}

\subsection{Proofs of Section \ref{sectionHawkes}}

Before we prove Proposition \ref{propKernel}, let us construct the matrices $A_u$ and $B_u$. Define first 
\beas  
\Pi = \l(\begin{matrix} r & 0 & \cdots & \cdots &  0 \\
                        1 & \ddots & \ddots& 0 & \vdots \\
                        0 & 2 & \ddots & \ddots & \vdots \\
                        \vdots & \ddots &\ddots &\ddots&0 \\
                        0& \cdots & 0 & p & r\end{matrix}\r) \in \reels^{(p+1) \times (p+1)}.
\eeas 
Then $B_u$ is defined as
\bea \label{defBu} 
B_u = \l(\begin{matrix} \Pi & 0 & 0 \\
                        0 & \Pi & \xi \cali_{p+1}\\
                        0&-\xi \cali_{p+1} & \Pi
                        \end{matrix}\r) \in \reels^{(3p+3)\times (3p+3)}.
\eea 
Now, let $\mathbf{a} = \l(a_0,...,a_p, ca_0,...,ca_p, da_0,...,da_p \r)^T,$ and $\mathbf{b}$ the column vector of $\mathbb{R}^{3p+3}$ such that $\mathbf{b}_1 = \mathbf{b}_{p+2} = 1$, and all its other components are null. Then we define 
\bea \label{defAu}  
A_u = \mathbf{a} \mathbf{b^T}. 
\eea 
\begin{proof}[Proof of Proposition \ref{propKernel}]
Assume that $f$ admits the claimed representation. By the Jordan normal form decomposition (see Theorem 3.4.1.5 in \cite{horn2012matrix}) of $B$, $B$ is similar to a block-diagonal matrix $\tilde{B} = \textnormal{diag}(J_1,...,J_q)$ where the so-called Jordan blocks $J_is$ are either of the form 
\beas 
J_i=\l(\begin{matrix}
\lambda_i & 1 & 0 &0  \\
0 & \ddots & \ddots & 0  \\
\vdots &\ddots&\ddots & 1 \\
0 & \cdots&0 &\lambda_i 
\end{matrix}\r)
\eeas 
with $\lambda_i$ being a positive real eigenvalue for $B$, or 
\beas 
J_i=\l(\begin{matrix}
C_i & \cali_2 & 0 &0  \\
0 & \ddots & \ddots & 0  \\
\vdots &\ddots&\ddots & \cali_2 \\
0 & \cdots&0 &C_i 
\end{matrix}\r) \textnormal{ where } C_i = \l(\begin{matrix}
\alpha_i & -\beta_i\\ \beta_i & \alpha_i 
\end{matrix}\r)
\eeas 
for some $\alpha_i, \beta_i \in \reels$, where in that case $\lambda_i = \alpha_i + \mathbf{i} \beta_i$ (with $\mathbf{i}^2=-1$) is a complex eigenvalue for $B$. Since by assumption all the real parts of these eignevalues are positive, then a direct calculation on the Jordan blocks $J_1,...,J_q$ readily shows using both examples (\ref{polynomExample}) and (\ref{cosExample}) that the coefficients of $e^{-tB}$ are of the form (\ref{canonicalFormKernel}), and we are done. Conversely, let us now prove that $u$ admits the exponential representation
\beas 
u = \langle A_u | e^{-\cdot B_u} \rangle
\eeas 
where $A_u$ and $B_u$ were respectively defined in (\ref{defAu}) and (\ref{defBu}). For $t \geq 0$, $k \in \{0,...,p\}$ define, $v_{k}(t) = t^k e^{-rt}$, $v_{k,c}(t) = t^k \textnormal{cos}(\xi t) e^{-rt}$, and $v_{k,s}(t) = t^k \textnormal{sin}(\xi t) e^{-rt}$, and $V = \l(v_0,...,v_p, v_{0,c},...,v_{p,c}, v_{0,s},...,v_{p,s}\r)^T$. We immediately check that $V$ satisfies the ordinary differential equation $V' = -B_{u} V$, and $V(0) = \textbf{b}$, so that $V(t) = e^{-tB_u}\textbf{b}$. Since $u = \textbf{a}^T V$, we get that $u(t) = \sum_{i,j=1}^{3p+3}\textbf{a}_i \textbf{b}_j [e^{-tB_u}]_{i,j} = \langle A_u |e^{-tB_u} \rangle$. Finally, one easily shows that the eigenvalues of $B_u$ are all in the set $\{r, r+\textbf{i}\xi, r -\textbf{i}\xi\}$ and thus have a negative real part.
\end{proof}
We now prove that $Z$ is a Markov process.

\begin{proof}[Proof of Proposition \ref{propGen}]
\textbf{Step 1:} We prove \textit{1-3}. By construction and (\ref{defHgenExp}), the counting process $N$ admits a piecewise deterministic stochastic intensity vector between two successive jump times. In other words, conditionally to $\calf_{T_{i-1}}$, $\Delta T_i$ is the next jump of an inhomogenous Poisson process with intensity $\bar{\lambda}(t) = \sum_{\alpha=1}^d \lambda^\alpha(t)$. Moreover, note that on the event $\{T_{i-1} < t \leq T_i\}$, we have $\lambda^\alpha(t) = \mu^\alpha(t - T_{i-1},Z_{T_{i-1}})$, hence $\Delta T_i$ follows the density $f(\cdot, Z_{T_{i-1}})$. Next, it is well-known that for $d$ independent inhomogenous Poisson processes, given $\Delta T_i$, the probability for the label $\kappa_i$ of being equal to $\alpha$ is $\lambda^\alpha( T_i)/ \sum_{\beta=1}^d \lambda^\beta(T_i)$, which gives us the second point. Finally, the third point is a consequence of (\ref{defKernelY}) and of the fact that $\proba[X_i \in A | \calf_{T_{i-1}}, \Delta T_i] = \sum_{\alpha=1}^d \proba[X_i \in A | \calf_{T_{i-1}}, \Delta T_i, \kappa_i = \alpha] \proba[\kappa_i =\alpha | \calf_{T_{i-1}}, \Delta T_i ] $  along with the second point of this proposition.\\
\textbf{Step 2:} We show that $Z$ is Markovian and admits the claimed generator. Note that $Z$ is piecewise deterministic between jump times. Then $Z$ will be Markovian if given $\calf_{T_{i-1}}$ the distribution of $(\Delta T_i, \Delta Z_{T_i})$ depends on $Z_{T_{i-1}}$ only. By the first point of the proposition, the marginal distribution of $\Delta T_i$ given $\calf_{T_{i-1}}$ is a function of $Z_{T_{i-1}}$ indeed. Moreover, note that by (\ref{defExcitationElementaire}), $\Delta Z_{T_i}$ depends on $X_i$ and $Z_{T_{i-1}}$ only, and therefore by the third point we deduce that the law of $\Delta Z_{T_i}$ given $\calf_{T_{i-1}}$ and $\Delta T_i$ depends on $Z_{T_{i-1}}$ only, which proves that $Z$ is Markovian. Moreover, the Feller property of $Z$ easily comes from the fact that all quantities involved in the distribution of $Z$ are continuous in the initial condition $z$ and the kernels $\calq_\beta$ are assumed Feller too. Now we turn to the expression of the generator $\call$. Given an initial condition $Z_0 = z = (\epsilon,x) \in \E \times \mathbb{X}$ Note that by definition of $\cale$, we have for any $\alpha,\beta \in \{1,...,d\}$ 
\beas 
\cale_{\alpha\beta}(t) = \epsilon + \int_{[0,t] \times \mathbb{X}}  g_{\alpha\beta}(x) \overline{N}_\beta(ds,dx) - B_{\alpha\beta} \int_0^t \cale_{\alpha\beta}(s)ds.
\eeas 
Considering now $t \geq 0$ and using the above integral representation for $\cale$, we have on the event $A_{t,\beta} = \{ 0 < T_1 \leq t < T_2, \kappa_1 = \beta\} $ that, for a smooth and bounded function $f$ 
\beas 
f(Z_t) = f(\cale_t, X_1) &=& f\l(\epsilon + \l(g_{\alpha\gamma}(X_1)\mathbb{1}_{\{\gamma = \beta\}} - B_{\alpha\gamma} \int_0^t \cale_{\alpha\gamma}(s)ds\r)_{\alpha,\gamma \in \{1,...,d\}}, X_1\r).
\eeas 
Moreover, on the event $B_t = \{ 0 \leq t < T_1\}$, we have 
\bea  \label{eqIIGen}
f(Z_t)&=& f\l(z\r) - \sum_{\alpha,\beta=1}^d \int_0^t \frac{\partial f}{\partial \epsilon_{\alpha\gamma}}(\tilde{Z}_{\alpha\gamma}(s)). B_{\alpha\gamma} \cale_{\alpha\gamma}(s)ds
\eea 
with $\tilde{Z}_{\alpha\gamma}(s) = \l( \epsilon  - B_{\alpha\gamma} \int_0^s \cale_{\alpha\gamma}(u)du, X_1\r)$. We now write, with $C_t = \l[\bigcup_{\beta=1}^d A_{t,\beta} \cup B_t\r]^c$ (where for an event $A$, $A^c$ stands for its complementary event), 
\beas
t^{-1}\l(\esp[f(Z_t)] - f(z)\r) &=& t^{-1} \sum_{\beta = 1}^d \esp\l[(f(Z_t) - f(z))\mathbb{1}_{A_{t,\beta}}\r] + t^{-1}\esp\l[(f(Z_t) - f(z))\mathbb{1}_{B_{t}} \r] + t^{-1} \esp\l[(f(Z_t) - f(z)) \mathbb{1}_{C_t}\r]\\
&=& I + II + III.
\eeas 
Now, $I$ can be rewritten
\beas 
I = t^{-1} \sum_{\beta = 1}^d \esp\l[\l( f\l(\epsilon + \l(g_{\alpha\gamma}(X_1)\mathbb{1}_{\{\gamma = \beta\}} - B_{\alpha\gamma} \int_0^t \cale_{\alpha\gamma}(s)ds\r)_{\alpha,\gamma \in \{1,...,d\}}, X_1\r) - f(z)\r)\mathbb{1}_{A_{t,\beta}}\r],
\eeas 
and since $N$ admits stochastic intensities with respect to Lebesgue measure, standard arguments yield that $\proba[ T_2 \leq t] = O(t^2)$, so that $A_{t,\beta}$ can be replaced by $\tilde{A}_{t,\beta} = \{0 < T_1 \leq t, \kappa_1 =\beta\}$ without affecting the limit in time of $I$. By (\ref{defKernelY}), we know that the law of $X_1$ given $\tilde{A}_{t,\beta}$ is given by $\calq_{\beta}(z, \cdot)$, and also that $\proba[\tilde{A}_{t,\beta}] = \int_0^t \mu_\beta(u,z) e^{- \int_0^u \mu(s,z)ds}du$, so that by taking taking the limit $t \to 0$ in $I$ we readily get  by the dominated convergence theorem
\bea 
I \to \sum_{\beta=1}^d \int_\mathbb{X} \l(f\l(\epsilon + \l( g_{\alpha\gamma}(y)\mathbb{1}_{\{\gamma = \beta\}} \r)_{\alpha,\gamma \in \{1,...,d\}}, y\r) -  f(z) \r)\calq_\beta(x,dy) \underbrace{\mu_{\beta}(0,z)}_{\psi_{\beta}(z)}.
\eea
Similarly, from (\ref{eqIIGen}) and $\proba[B_t] \sim 1 - \mu(0,z)t$ as $t \to 0$, we easily deduce that
\beas  
II &=& t^{-1} \esp \sum_{\alpha,\beta=1}^d \int_0^t \frac{\partial f}{\partial \epsilon_{\alpha\gamma}}(\tilde{Z}_{\alpha\gamma}(s)). B_{\alpha\gamma} \cale_{\alpha\gamma}(s)ds + o(1) \\
&\to& \sum_{\alpha,\beta=1}^d \frac{\partial f}{\partial \epsilon_{\alpha\gamma}}(z). B_{\alpha\gamma} \epsilon_{\alpha\gamma}.
\eeas  
Finally $\proba[C_t] \sim O(t^2)$, so that $III \to 0$.
\end{proof}

\begin{proof}[Proof of Lemma \ref{lemDrift}]
Let us note that $A_{\alpha\beta}$ can be assumed having non-zero coefficients only, without loss of generality. Indeed, we can always remove from $\cale_{\alpha\beta}$ the corresponding components which never appear in the expression of $\lambda^\alpha$ and do not play any role in the dynamics of $\overline{N}$. For now, we assume that the coefficients appearing in the drift function $V$, $a_{\alpha\beta}$ and $\eta$, are of the form $\lambda \tilde{a}_{\alpha\beta}$ and $\lambda \tilde{\eta}$ respectively, where $\lambda \in [0,1]$ is a parameter that we will have to adjust, and the coefficients $\tilde{a}_{\alpha\beta}$ and $\eta$ are assumed fixed and will be specified later, and are small enough so that $c_{\alpha\beta} := \langle \tilde{a}_{\alpha\beta} | \cali_{p\times p}\rangle \leq \overline{c}$ for all $\alpha,\beta$, and $\tilde{\eta} <\overline{c}$ where $\overline{c}$ was introduced in \textbf{[L2]}. Recalling that $V(z)  = e^{\sum_{\alpha,\beta=1}^d \langle a_{\alpha\beta}| \epsilon_{\alpha\beta}\rangle +  \eta f_X(x)}$, Proposition \ref{propGen} immediately yields
\beas 
\call V(z) &=&V(z) \l(\sum_{\beta=1}^d \psi_\beta(z) \int_\mathbb{X} \{e^{  \lambda \sum_{\alpha=1}^d c_{\alpha\beta} g_{\alpha\beta}(y) + \lambda \tilde{\eta} [f_X(y) - f_X(x)] }  -1\}  \calq_\beta(x,dy)  -\sum_{\beta,\gamma=1}^d \langle a_{\beta\gamma} |B_{\beta\gamma} \epsilon_{\beta\gamma} \rangle \r).
\eeas
Our goal is to linearize the first term in $\lambda$ in order to get rid of the exponential function. Accordingly, we define $\xi(x) = e^x - 1 - x$, and noticing that for $\lambda \in [0,1]$, $\xi(\lambda x) \leq \lambda^2 \xi(x) \leq \lambda^2 e^x$, we rewrite 
\beas
\int_\mathbb{X} \{e^{  \lambda \sum_{\alpha=1}^d c_{\alpha\beta} g_{\alpha\beta}(y)+ \lambda \tilde{\eta} [f_X(y) - f_X(x)] }  -1\} \calq_\beta(x,dy) &\leq& \lambda \int_\mathbb{X}   \l\{ \sum_{\alpha=1}^d c_{\alpha\beta} g_{\alpha\beta}(y)   +   \tilde{\eta} [f_X(y) - f_X(x)]\r\}\calq_\beta(x,dy) \\ &+& \lambda^2 \int_\mathbb{X}    \xi\l(\sum_{\alpha=1}^d c_{\alpha\beta} g_{\alpha\beta}(y) +   \tilde{\eta} [f_X(y) - f_X(x)]\r)  \calq_\beta(x,dy)\\
&\leq& \lambda \sum_{\alpha=1}^d c_{\alpha\beta} G_{\alpha\beta}(x) + \lambda \tilde{\eta}  \int_\mathbb{X}[f_X(y) - f_X(x)]  \calq_\beta(x,dy) \\
&+& \lambda^2  \underbrace{\int_\mathbb{X} e^{\overline{c} \l[\sum_{\alpha=1}^d  g_{\alpha\beta}(y) + f_X(y) - f_X(x) \r]}  \calq_\beta(x,dy)}_{ \leq M < +\infty},
\eeas  
where the domination of the term of order $\lambda^2$ is ensured by \textbf{[L2]}. Injecting the above calculation in $\call V$, and bounding the term $\psi_\beta(z)$ by $\nu_\beta(x) +  \sum_{\gamma=1}^d \langle A_{\beta \gamma} | \epsilon_{\beta\gamma}\rangle $ by \textbf{[L1]}, we get
\beas 
\frac{\call V(z)}{V(z)} &\leq& \lambda \sum_{\beta=1}^d \nu_\beta(x) \l[\sum_{\alpha=1}^d c_{\alpha\beta} G_{\alpha\beta}(x)\r]  + \lambda \sum_{\beta,\gamma=1}^d \l\langle \l.  \l[\sum_{\alpha=1}^d c_{\alpha\beta} G_{\alpha\beta}(x) \r]  A_{\beta\gamma} - B_{\beta\gamma}^T\tilde{a}_{\beta\gamma}\r| \epsilon_{\beta\gamma} \r\rangle \\ &+&  \lambda \tilde{\eta}\sum_{\beta=1}^d \psi_\beta(z) \int_\mathbb{X}[f_X(y) - f_X(x)]  \calq_\beta(x,dy)  + \lambda^2M\sum_{\beta=1}^d \psi_\beta(z). 
\eeas 
Consider now $\tilde{a}_{\alpha\beta} = \l(B_{\alpha\beta}^{-1}\r)^T A_{\alpha\beta} \kappa_\alpha$, where $\kappa$ was defined in \textbf{[L3]}. Remark that for an admissible value of $\epsilon_{\alpha\beta}$ of the form $\cale_{\alpha\beta}(t)$, we have 
\beas 
\langle \tilde{a}_{\alpha\beta} | \cale_{\alpha\beta}(t) \rangle &=& \kappa_\alpha   \int_{[0,t] \times \mathbb{X}}  \langle A_{\alpha\beta} |   B_{\alpha\beta}^{-1} e^{-(t-s)B_{\alpha\beta}} \rangle g_{\alpha\beta}(x)\overline{N}_\beta(ds,dx) \\
&=& \kappa_\alpha \int_{[0,t] \times \mathbb{X}}  \l\{ \langle A_{\alpha\beta} |   B_{\alpha\beta}^{-1} \rangle  + \int_0^{t-s} h_{\alpha\beta}(u)du \r\} g_{\alpha\beta}(x)\overline{N}_\beta(ds,dx) \geq 0.
\eeas  
Note also that we can always scale $\kappa$ by an arbitrary small number so that $c_{\alpha\beta} \leq \overline{c}$.
The second term in the above equation can thus be rewritten, using (\ref{repPhi}) 
\beas 
\lambda \sum_{\beta,\gamma=1}^d \l\langle \l.  \l[\sum_{\alpha=1}^d c_{\alpha\beta} G_{\alpha\beta}(x) \r]  A_{\beta\gamma} - B_{\beta\gamma}^T\tilde{a}_{\beta\gamma}\r| \epsilon_{\beta\gamma} \r\rangle &=& \lambda \sum_{\beta,\gamma=1}^d \l\langle \l.   \l[\sum_{\alpha=1}^d \kappa_{\alpha} \Phi_{\alpha\beta}(x) - \kappa_\beta\r]  A_{\beta\gamma} \r| \epsilon_{\beta\gamma} \r\rangle\\
&\leq& \lambda (\rho-1) \sum_{\beta,\gamma=1}^d \l\langle \l.   \kappa_\beta  A_{\beta\gamma} \r| \epsilon_{\beta\gamma} \r\rangle.
\eeas 
As for the first and the third terms, another application of \textbf{[L2]} respectively gives
\beas 
\lambda \sum_{\beta=1}^d\nu_\beta(x) \l[\sum_{\alpha=1}^d c_{\alpha\beta} G_{\alpha\beta}(x)\r] \leq  \lambda \max_{\alpha,\beta = 1,...,d} c_{\alpha\beta} v(x) u_X(x) 
\eeas 
for some function $v$ tending to $0$ when $|x| \to +\infty$, and 
\beas 
\lambda \tilde{\eta}\sum_{\beta=1}^d \psi_\beta(z) \int_\mathbb{X}[f_X(y) - f_X(x)]  \calq_\beta(x,dy) \leq - \delta \lambda \tilde{\eta} u_X(x) + \lambda \tilde{\eta} K \sum_{\beta,\gamma=1}^d \l\langle \l.    A_{\beta\gamma} \r| \epsilon_{\beta\gamma} \r\rangle 
\eeas 
for some constants $\delta>0$ and $K \geq 0$, by (\ref{driftConditionMark}) and (\ref{momentDriftExpo}). Overall, defining $\underline{\kappa} = \min_{\beta=1,...,d} \kappa_\beta >0$ and using the fact that there exists $Q_1, Q_2 \geq 0$ such that $\sum_{\alpha = 1}^d \nu_\alpha \leq Q_1 + Q_2 u_X$, we get 
\beas 
\frac{\call V(z)}{V(z)} \leq  \lambda^2 Q_1 M + \l[\lambda^2MQ_2+\lambda v(x) - \delta \lambda \tilde{\eta} \r]u_X(x) + \l[ \lambda(\rho-1)\underline{\kappa} + \lambda \tilde{\eta}K  \r] \sum_{\beta,\gamma=1}^d \l\langle \l.    A_{\beta\gamma} \r| \epsilon_{\beta\gamma} \r\rangle.
\eeas  
Next, taking $\tilde{\eta} = \lambda$, and then taking $\lambda$ small enough yields 
\bea \label{dominationLvv}
\frac{\call V(z)}{V(z)} \leq K'+ \lambda \l[ v(x) - \frac{\delta}{2} \tilde{\eta} \r]u_X(x) + \lambda \frac{(\rho-1)\underline{\kappa}}{2}   \sum_{\beta,\gamma=1}^d \l\langle \l.    A_{\beta\gamma} \r| \epsilon_{\beta\gamma} \r\rangle,
\eea
for some $K' >0$, and now using that $v(x) \to 0$, $u_X(x) \to + \infty$ when $|x| \to + \infty$, and $\sum_{\beta,\gamma=1}^d \l\langle \l.    A_{\beta\gamma} \r| \epsilon_{\beta\gamma} \r\rangle \to + \infty$ when $|\epsilon| \to +\infty$, we deduce that there exists a compact set $\overline{K} \subset \E \times \mathbb{X}$ such that for $z \notin \overline{K}$, we have  $\frac{\call V(z)}{V(z)} \leq - \tilde{\delta} <0$. Since the right-hand side of (\ref{dominationLvv}) and $V$ are bounded on $\overline{K}$, this yields for some $\tilde{L}\geq 0$ large enough
\beas 
\call V \leq - \tilde{\delta} V + \tilde{L}, 
\eeas 
which is the claimed result.
\end{proof}

The next two lemmas are auxiliary results used in the proof of Lemma \ref{lemND}.
\begin{lemma*} \label{lemDeterminant}
Let $n \in \naturels - \{0\}$, $(\alpha_{i,j})_{1 \leq i,j \leq n}$ a family of positive numbers, and  $f_1,...,f_n : \reels_+ \to \reels$ a family of functions. Define the matrix $M[f_1,...,f_n,t_1,...,t_n] = [\alpha_{i,j} f_j(t_i)]_{1 \leq i,j \leq n}$. If $(f_i)_{1 \leq i \leq n}$ is linearly independent then there exist $0 \leq t_1 \leq t_2 \leq ... \leq t_n < +\infty$ such that 
\beas 
\textnormal{det} M[f_1,...,f_n,t_1,...,t_n] \neq 0.
\eeas 
\end{lemma*}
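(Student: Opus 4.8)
The plan is to show that the function $D(t_1,\dots,t_n):=\det M[f_1,\dots,f_n,t_1,\dots,t_n]$ does not vanish identically on the ordered region $\Delta_n:=\{(t_1,\dots,t_n)\in\reels_+^n:\,t_1\le\cdots\le t_n\}$, for then any point of $\Delta_n$ at which $D\ne0$ gives the required $t_1\le\cdots\le t_n$. Two preliminary remarks guide the argument: relabelling the $f_j$ (together with the columns of $(\alpha_{i,j})$) changes $D$ only by a sign, so we may permute the functions at will; and $D$ is not identically zero even on all of $\reels_+^n$ — if it were, then fixing $t_2,\dots,t_n$ and expanding along the first row, the map $t_1\mapsto\sum_{j=1}^n(-1)^{1+j}\alpha_{1,j}f_j(t_1)\det M_{1,j}$, with $M_{1,j}$ the minor deleting row $1$ and column $j$, is a constant linear combination of the $f_j$ vanishing identically, so by linear independence and $\alpha_{1,j}>0$ each minor has $\det M_{1,j}\equiv0$; but $M_{1,j}$ is the matrix of the lemma for the linearly independent family $(f_k)_{k\ne j}$ and the positive coefficients $(\alpha_{i,k})_{2\le i\le n,\,k\ne j}$, contradicting the statement for $n-1$ (this already proves the unordered version, by induction on $n$).

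For the ordered statement I would also induct on $n$, the case $n=1$ being immediate, and peel off the largest point. Expanding $D(t_1,\dots,t_{n-1},t_n)$ along the last row gives $D=g(t_n)$ with $g=\sum_{j=1}^n c_jf_j$ and $c_n=\alpha_{n,n}\,\det M[f_1,\dots,f_{n-1},t_1,\dots,t_{n-1}]$. If $t_1\le\cdots\le t_{n-1}$ are chosen by the induction hypothesis so that this $(n-1)\times(n-1)$ determinant is nonzero, then $c_n\ne0$ (here $\alpha_{n,n}>0$ is used), hence $g$ is a nontrivial linear combination of $f_1,\dots,f_n$ and is therefore not the zero function; it then suffices to find $t_n\ge t_{n-1}$ with $g(t_n)\ne0$. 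To make this last step available I would first replace $\reels_+$ by a bounded interval $[0,b]$ on which $f_1,\dots,f_n$ are still linearly independent — such $b$ exists by a compactness argument on the unit sphere of coefficient vectors — run the whole induction over an arbitrary interval $[a,b]$, and choose the window $[a,b']\subset[a,b]$ for the $(n-1)$-step so that, simultaneously, the relevant $(n-1)$-subfamily is linearly independent on $[a,b']$ (so the induction hypothesis applies and yields points with $t_{n-1}\le b'$) and $f_1,\dots,f_n$ stay linearly independent on $[b',b]$ (so that $g$ cannot vanish on $[t_{n-1},b]$, whence the required $t_n$).

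The step I expect to be the main obstacle is precisely the existence of such a window: finding $b'<b$ compatible with both constraints needs real care, since one wants $b'$ large and the other wants it small, and it may be necessary to relabel the $f_j$ (peeling off a different function) and to iterate. This is where the hypotheses bite hardest, because no regularity whatsoever is assumed on the $f_j$: once the earlier points are fixed, the linear combination $g$ could a priori vanish on the entire interval to the right of $t_{n-1}$, and excluding this forces the linear-independence bookkeeping above rather than a perturbation argument. ``Spike'' examples, such as indicator functions of points, show both that one cannot perturb the $t_i$ and that positivity of the $\alpha_{i,j}$ is essential — it is what guarantees $c_n=\alpha_{n,n}\times(\text{nonzero})\ne0$ and, more generally, that nonzero entries cannot cancel.

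Finally I would note that in the case actually used later, where the $f_j$ are entries of matrix exponentials and hence real-analytic, the upgrade from $\reels_+^n$ to $\Delta_n$ is immediate: $D$ is then real-analytic and, being not identically zero on $\reels_+^n$ by the remark in the first paragraph, has a zero set of Lebesgue measure zero, so it cannot vanish on the positive-measure set $\Delta_n$.
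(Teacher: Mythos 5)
You have correctly spotted a genuine issue that the paper's own proof glosses over. Your first paragraph reproduces the paper's argument exactly: assume $\det M\equiv 0$ on $\reels_+^n$, expand along a row, use linear independence of the $f_j$ and positivity of $\alpha_{1,j}$ to annihilate the minors, and contradict the inductive hypothesis. This shows that $\det M$ is not identically zero on $\reels_+^n$ (the unordered version), but it does \emph{not} produce a point in the ordered chamber $\{t_1\le\cdots\le t_n\}$: permuting the $t_i$ permutes the function evaluations but not the rows of $(\alpha_{ij})$, so it changes the matrix and not merely the sign of its determinant. The paper's closing sentence, ``this proves the existence of $t_1,\dots,t_{n+1}$ such that $\det M\ne 0$,'' therefore delivers only unordered points, while the lemma claims ordered ones. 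Your instinct that the ordering is the delicate part is right, and the paper's written proof, read literally, has the same gap your first paragraph does.

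Your second-paragraph peeling attempt is, as you suspect, not rescuable by a single window. Concretely, take $n=2$, $f_1=\mathbb{1}_{(5,10)}$, $f_2=\mathbb{1}_{(0,10)}$ (linearly independent on $\reels_+$), and any $\alpha$ with $\alpha_{11}\alpha_{22}=\alpha_{12}\alpha_{21}$. The only $t_1$ making the $1\times1$ minor $\alpha_{11}f_1(t_1)$ nonzero lies in $(5,10)$, and there $g(t_2)=\alpha_{11}\alpha_{22}\bigl(f_2(t_2)-f_1(t_2)\bigr)$ vanishes for every $t_2\ge t_1$. Yet the lemma's conclusion does hold for this pair (take $t_1\in(0,5]$ and $t_2\in(5,10)$), so what fails is the chosen peeling order, not the statement; relabelling the $f_j$ is genuinely forced, as you anticipated, and there is no single-threshold $b'$ that works for a fixed labelling.

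Your final paragraph supplies the piece that actually makes the lemma usable where it is needed, in Lemma \ref{lemGradientInvertible}: the $f_j$ that occur there are entries of matrix exponentials and hence real-analytic, so $\det M$ is real-analytic in $(t_1,\dots,t_n)$ and, being not identically zero on $\reels_+^n$ by the inductive argument, has a zero set of Lebesgue measure zero, which cannot contain the full-measure chamber $\{t_1\le\cdots\le t_n\}$. This analyticity step is implicit in the paper and you were right to make it explicit. Whether the ordered statement is true for completely arbitrary linearly independent $f_j$ is a separate and more delicate question, but it does not bear on the paper's application.
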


\begin{proof}
Assume that $(f_i)_{1 \leq i \leq n}$ is linearly independent. We prove our claim by induction on $n \geq 1$. When $n=1$,  $\textnormal{det} M[f_1,t_1] = \alpha_{11}f_1(t_1)$ which is non-null as soon as there exists $t_1 \geq 0$ such that $f_1(t_1) \neq 0$, which is obviously true. Let $n \geq 1$ be such that the result holds. Let us assume that for any $t_1,...,t_{n+1}$, $\textnormal{det} M[f_1,...,f_{n+1},t_1,...,t_{n+1}] = 0$. Then, application of Laplace's formula yields
\beas
0=\textnormal{det} M[f_1,...,f_{n+1},t_1,...,t_{n+1}] = \sum_{j=1}^n (-1)^{j+1} \alpha_{1,j} f_j(t_1) \textnormal{det} M[f_1,... f_{j-1},f_{j+1},...f_{n+1},t_2,...,t_{n+1}],
\eeas 
and since the equality holds for any $t_1 \geq 0$, this proves that a linear combination of the $f_js$ is null, which implies that each coefficient should be $0$. Since $\alpha_{1,j} \neq 0$, $\textnormal{det} M[f_1,... f_{j-1},f_{j+1},...f_{n+1},t_2,...,t_{n+1}] =0$ for any $j$ and any $t_2,...,t_n$. But this is in contradiction with the induction hypothesis, which in turn proves the existence of $t_1,...,t_{n+1}$ such that $\textnormal{det} M[f_1,...,f_{n+1},t_1,...,t_{n+1}] \neq 0$. %Note also that the $t_i$'s must all be distinct since $\textnormal{det} M[f_1,...,f_{n+1},t_1,...,t_{n+1}] = 0$ if $t_i = t_j$ for some $i \neq j$. 
%\textbf{Step 2:} Assume now that there $f_1,...,f_n$ is not linearly independent. Then for any $t_1,...,t_n$, note that one of the column of $M[f_1,...,f_{n},t_1,...,t_{n}]$ is a linear combination of the other ones, and so $\textnormal{det} M[f_1,...,f_{n+1},t_1,...,t_{n+1}] = 0$. 
\end{proof}

\begin{lemma*} \label{lemGradientInvertible}
Define $\x^*$ as in the proof of Lemma \ref{lemND}, and $z^* = (0, x_0)$. Then for $T>0$ large enough, there exists $\bt^*$ such that $\sum_{i=1}^{p^2d^2} t_i^* < T$ and 
\beas
\textnormal{det}\nabla_{\bt} C(T, \bt^*,\x^*,z^*) \neq 0.
\eeas 
\end{lemma*}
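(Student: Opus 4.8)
The plan is to reduce everything to Lemma \ref{lemDeterminant}. Recall from the proof of Lemma \ref{lemND} that $C(T,\bt,\x,z)$ is the value at time $T$ of the deterministic skeleton of the elementary excitation process $\cale$ issued from $z$ and fed by $p^2d^2$ jumps located at the cumulative times $s_k = \sum_{i\leq k}t_i$, carrying the marks $\x = (x_1,\dots,x_{p^2d^2})$, the covariate labels being chosen (as in that proof) so that exactly $p^2d$ jumps fall on each covariate $\beta$; writing $I_\beta$ for the corresponding block of indices, and using that $z^* = (0,x_0)$ makes $\cale$ vanish at time $0$, we have $C(T,\bt,\x^*,z^*) = (\cale_{\alpha\beta}(T))_{\alpha,\beta}$ with $\cale_{\alpha\beta}(T) = \sum_{k\in I_\beta}e^{-(T-s_k)B_{\alpha\beta}}g_{\alpha\beta}(x_k^*)$. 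First I would pass from the increments $\bt$ to the absolute jump times $\mathbf{s} = (s_1,\dots,s_{p^2d^2})$: the change of variables $\bt\mapsto\mathbf{s}$ is linear with (lower-triangular) Jacobian of determinant $1$, so $\textnormal{det}\nabla_{\bt}C = \textnormal{det}\nabla_{\mathbf{s}}\widetilde{C}$, where $\widetilde{C}$ is $C$ re-expressed in the variables $\mathbf{s}$, and it suffices to exhibit some $\mathbf{s}$ with $0<s_1<\cdots<s_{p^2d^2}<T$ at which $\textnormal{det}\nabla_{\mathbf{s}}\widetilde{C}\neq 0$.

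Differentiating, $\partial_{s_k}\cale_{\alpha\beta}(T) = \mathbb{1}_{\{k\in I_\beta\}}\,B_{\alpha\beta}e^{-(T-s_k)B_{\alpha\beta}}g_{\alpha\beta}(x_k^*)$, so after reordering the rows of $\nabla_{\mathbf{s}}\widetilde{C}$ by the $\cale$-block index $\beta$ and the columns by the $I_\beta$'s, the matrix becomes block-diagonal, with one square block $D^{(\beta)}$ of size $(dp^2)\times(dp^2)$ per covariate $\beta$, whose $((\alpha,i,j),k)$ entry, for $k\in I_\beta$, is $g_{\alpha\beta}(x_k^*)\,[B_{\alpha\beta}e^{-(T-s_k)B_{\alpha\beta}}]_{ij}$. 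The transpose of $D^{(\beta)}$ is precisely a matrix of the form $M[f_1,\dots,f_n,t_1,\dots,t_n]$ of Lemma \ref{lemDeterminant}, with $n = dp^2$, evaluation points $T-s_k$ for $k\in I_\beta$, positive coefficients $g_{\alpha\beta}(x_k^*)$ — positivity being guaranteed by \textbf{[ND1]}, which gives $g_{\alpha\beta}\geq\underline{g}>0$ — and function system $u\mapsto[B_{\alpha\beta}e^{-uB_{\alpha\beta}}]_{ij}$ indexed by $(\alpha,i,j)\in\{1,\dots,d\}\times\{1,\dots,p\}^2$. I would then verify that this function system is linearly independent: by Remark \ref{rmkLinearlyIndependent} the components of $e^{-uB_{1\beta}},\dots,e^{-uB_{d\beta}}$ form a linearly independent family, and since each $B_{\alpha\beta}$ is invertible (its eigenvalues have positive real parts), for every fixed $\alpha$ the components of $B_{\alpha\beta}e^{-uB_{\alpha\beta}}$ span the same subspace of functions as those of $e^{-uB_{\alpha\beta}}$; the directness of the sum of these $d$ subspaces is therefore preserved, so the enlarged family stays linearly independent, and Lemma \ref{lemDeterminant} gives that $\textnormal{det}D^{(\beta)}$, viewed as a function of its evaluation points, is not identically zero.

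To finish, I would note that in the variables $u_k := T-s_k$ every nonzero entry of $\nabla_{\mathbf{s}}\widetilde{C}$, namely $B_{\alpha\beta}e^{-u_kB_{\alpha\beta}}g_{\alpha\beta}(x_k^*)$, depends on $u_k$ alone, so that $\Delta(\mathbf{u}):=\textnormal{det}\nabla_{\mathbf{s}}\widetilde{C}$ is, up to a fixed sign, the product $\prod_{\beta=1}^{d}\textnormal{det}D^{(\beta)}$ of the non-zero real-analytic functions just produced; hence $\Delta$ is real-analytic on $\reels^{p^2d^2}$ and not identically zero, so its zero set is Lebesgue-null. Choosing $\mathbf{u}^*$ outside this zero set and inside the positive-measure region $\{u_1>u_2>\cdots>u_{p^2d^2}>0\}$, then picking any $T>u_1^*$ and setting $s_k^* = T-u_k^*$ and $t_k^* = s_k^*-s_{k-1}^*$ (with $s_0^* = 0$), we obtain $0<s_1^*<\cdots<s_{p^2d^2}^*<T$, whence $\sum_k t_k^* = s_{p^2d^2}^*<T$, and $\textnormal{det}\nabla_{\bt}C(T,\bt^*,\x^*,z^*) = \Delta(\mathbf{u}^*)\neq 0$, as required. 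I expect the only genuinely delicate point to be the linear independence of the family $\{[B_{\alpha\beta}e^{-\cdot B_{\alpha\beta}}]_{ij}\}$: this is exactly where Remark \ref{rmkLinearlyIndependent} (passage to a minimal matrix-exponential representation) together with the non-singularity of the $B_{\alpha\beta}$ is indispensable, for otherwise the image of $C$ would lie in a proper affine subspace of $\E$ and $\nabla_{\bt}C$ would be singular at every point.
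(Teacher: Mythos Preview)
Your proof is correct and rests on the same core idea as the paper's: block-decompose $\nabla_{\bt}C$ according to the covariate index $\beta$, then apply Lemma \ref{lemDeterminant} to each block using the linear independence of the matrix-exponential components guaranteed by Remark \ref{rmkLinearlyIndependent} and the invertibility of the $B_{\alpha\beta}$. Two technical choices differ from the paper and are worth noting. First, you pass to the absolute jump times $\mathbf{s}$ before differentiating, which gives a block-\emph{diagonal} Jacobian directly; the paper differentiates in the increments $\bt$, obtains (\ref{gradRep1})--(\ref{gradRep2}), and then performs elementary row operations to reach a block upper-triangular form --- these row operations are precisely the inverse of your change of variables, so the two reductions are equivalent. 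Second, to produce a point $\bt^*$ satisfying the ordering and the constraint $\sum_i t_i^*<T$, the paper applies Lemma \ref{lemDeterminant} inductively block by block (since $M_\beta$ depends on $t_1,\dots,t_{\beta p^2d}$) and then perturbs by continuity to make the $t_i^*$ distinct; you instead observe that the entries of $\nabla_{\mathbf{s}}\widetilde{C}$ depend only on $u_k=T-s_k$, so the determinant is a non-identically-zero real-analytic function of $\mathbf{u}$, whence its zero set is Lebesgue-null and a suitable strictly ordered $\mathbf{u}^*$ exists for any $T>u_1^*$. Your analyticity argument is a clean alternative that sidesteps the inductive bookkeeping and the separate continuity step.
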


\begin{proof}
For $\alpha, \beta \in \{1,...,d\}$, we have that 
\bea \label{gradRep1}
\frac{\partial C_{\alpha\beta}}{\partial t_i}(T, \bt, \x^*, z^*) = B_{\alpha\beta}\sum_{j= i \vee ((\beta-1)p^2d+1) }^{\beta p^2d} g_{\alpha\beta}(x_j^*) e^{-\l[T - \sum_{k=1}^j t_k\r]B_{\alpha\beta}}
\eea
if $1\leq i \leq  \beta p^2d$, and 
\bea \label{gradRep2}
\frac{\partial C_{\alpha\beta}}{\partial t_i}(T, \bt, \x^*, z^*) = 0
\eea 
otherwise. We now represent $C(T, \bt, \x^*, z^*)$ as a row vector of $\mathbb{R}^{p^2d^2}$ of the form 
\bea \label{linearizationC} 
C = \l[L(C_{11}), ..., L(C_{d1}), L(C_{12}),...., L(C_{d2}),..., L(C_{1d}),..., L(C_{dd})\r],
\eea 
where for a matrix $M \in \reels^{p\times p}$, $L(M) \in \reels^{p^2}$ is the row vector corresponding to the concatenation of the rows of $M$, and where we have omitted the dependency in $(T, \bt, \x^*, z^*)$ in (\ref{linearizationC}) for the sake of clarity. Therefore, we have 
\beas
\nabla_{\bt} C = \l[\begin{matrix}\frac{\partial C}{\partial t_1} \\ \vdots \\  \frac{\partial C}{\partial t_{p^2d^2}} \end{matrix}\r],
\eeas 
which is a matrix of size $N \times N$ with $N = p^2d^2$. Using the representations (\ref{gradRep1})-(\ref{gradRep2}) and elementary operations on the rows of $\nabla_{\bt} C$ yields the triangular form 
\beas 
\textnormal{det} \nabla_{\bt} C = \l| \begin{matrix} M_1 & \ast  &\cdots & \ast \\ 
                                                    0 & M_2 & \ddots & \vdots\\
                                                    \vdots & \ddots & \ddots & \ast \\
                                                    0 & \cdots & 0 & M_{d}
                                    \end{matrix} \r| = \prod_{\beta=1}^d \textnormal{det} M_\beta
\eeas 

where for $\beta \in \{1,...,d\}$, the matrix $M_\beta$ has dimension $p^2d \times p^2d$, and moreover we have 
\beas 
|\textnormal{det} M_\beta| = \l| \begin{matrix} g_{1\beta}(x_{j_\beta}^*) L\l( B_{1\beta} e^{-\l[T-\sum_{k=1}^{j_\beta} t_k\r]B_{1\beta}}\r) &\cdots& g_{d\beta}(x_{j_\beta}^*) L\l( B_{d\beta} e^{-\l[T-\sum_{k=1}^{j_\beta} t_k\r]B_{d\beta}}\r) \\
\vdots & \cdots & \vdots \\
 g_{1\beta}(x_{j_\beta + p^2d - 1}^*) L\l( B_{1\beta} e^{-\l[T-\sum_{k=1}^{j_\beta + p^2d - 1} t_k\r]B_{1\beta}}\r) &\cdots& g_{d\beta}(x_{j_\beta + p^2d - 1}^*) L\l( B_{d\beta} e^{-\l[T-\sum_{k=1}^{j_\beta + p^2d - 1} t_k\r]B_{d\beta}}\r) 
                                    \end{matrix} \r|,
\eeas
with $j_\beta= (\beta-1)p^2d+1$. We now prove that, if $T$ is taken large enough, then there exists $t_{(\beta-1) p^2 d + 1}^* <... < t_{\beta p^2d}^*$ such that $|\textnormal{det} M_\beta| \neq 0$. First, by Proposition \ref{propKernel} we immediately deduce that each column in the above determinant is of the form 
\beas 
\l[ \begin{matrix} g_{\alpha\beta}(x_{j_\beta}^*) f_{q, \alpha\beta}\l(T-\sum_{k=1}^{j_\beta} t_k\r) \\ \vdots \\ g_{\alpha\beta}(x_{j_\beta + p^2d-1}^*) f_{q, \alpha\beta}\l(T-\sum_{k=1}^{j_\beta + p^2d - 1} t_k\r) \end{matrix}\r]
\eeas
where for each $q \in \{1,...,p^2d\}$, $f_{q, \alpha\beta}$ is a linear combination of functions of the form $P_{q,\alpha\beta}(t) (1+ c_{q,\alpha\beta} \textnormal{cos}(\xi_{q,\alpha\beta}t) +d_{q,\alpha\beta} \textnormal{sin}(\xi_{q,\alpha\beta}t) + )e^{-r_{\alpha\beta}t}$, with $P_{q,\alpha\beta}$ a polynomial function, $r_{\alpha\beta} >0$, and where the family $\{f_{q,\alpha\beta}\}_{1 \leq q \leq p^2d, 1 \leq \alpha \leq d }$ may be assumed linearly independent up to a dimension reduction of $\cale$, following Remark \ref{rmkLinearlyIndependent}. Therefore, since $g_{\alpha\beta}(x_{k}^*) \geq \underline{g} >0$ for any $\alpha,\beta \in \{1,...,d\}$ and any $k \in \{1,...,p^2d^2\}$ by \textbf{[ND1]}, we can apply inductively Lemma \ref{lemDeterminant} to $M_\beta$ for $\beta \in \{1,...,d\}$ and take $T$ arbitrary large to get the existence of $\bt^*$ with $t_{1}^* \leq... \leq t_{p^2d^2}^*$ and $\sum_{i=1}^{p^2d^2} t_i^* < T$, such that $\textnormal{det} M_\beta \neq 0$ for all $\beta \in \{1,...,d\}$. Remark that we can assume the $t_i^*$s distinct since, by continuity of $M_\beta$ in $(t_i)_{i \in \{1,...,\beta p^2d\}}$, $\textnormal{det} M_\beta$ remains non-zero if the $t_{i}^*$s are slightly shifted. 
\end{proof}
We are now ready to prove Lemma \ref{lemND}.

\begin{proof}[Proof of Lemma \ref{lemND}]
Let $T > 0$, $A \in \mathbf{B}(\E)$, $B \in  \mathcal{X}$ and some $z \in \E \times \mathbb{X}$. Let us introduce the functions $C_{\alpha\beta} : \reels_+^{p^2d^2+1} \times \mathbb{X}^{p^2d^2} \times (\E \times \mathbb{X}) \to \reels^{p \times p}  $ defined by 
\bea 
C_{\alpha\beta}(T,\bt, \x , z) = e^{-TB_{\alpha\beta} }\epsilon_{\alpha\beta} + \sum_{i= (\beta-1)p^2d + 1}^{\beta p^2d} g_{\alpha\beta}(x_i) e^{-\l[T - \sum_{j=1}^i t_j\r]B_{\alpha\beta}},
\eea 
where $\bt = (t_1,...,t_{p^2d^2})$ and $\x = (x_1,...,x_{p^2d^2})$, that we gather in the global function $C = (C_{\alpha\beta})_{\alpha,\beta = 1,...,d}$. Introducing now the events 
\beas 
E_{T, \Delta \mathbf{T}, \Delta T_{p^2d^2+1}} = \{T_{p^2d^2} < T <  T_{p^2d^2+1}  \}, 
\eeas 
\beas 
F = \cap_{\beta=1}^d \{ \kappa_{(\beta-1)p^2d +1} = \kappa_{(\beta-1)p^2d +2} = ...=\kappa_{\beta p^2d} = \beta \},
\eeas 
where $\Delta \mathbf{T} = (\Delta T_1,...,\Delta T_{p^2d^2})$ corresponds to the random vector of the first $p^2d^2$ inter-arrival times, and noting that on $E_{T, \Delta \mathbf{T}} \cap F \cap \{Z_0 = z\}$ we have $\cale(T) = C(T, \Delta \mathbf{T}, \mathbf{X}, z)$ with $\mathbf{X} = (X_1,...,X_{p^2d^2})$ the vector of the first $p^2d^2$ marks, we have by definition of $P^T$ 
\bea \label{dominationKernel1}
\nonumber P^T(z,A \times B) &=& \proba\l[ Z_T \in A \times B | Z_0 = z\r] \\
&\geq& \proba\l[ \{  C(T, \Delta \mathbf{T}, \mathbf{X}, z) \in A \} \cap \{X_{p^2d^2} \in B\} \cap E_{T, \Delta \mathbf{T}, \Delta T_{p^2d^2+1}} \cap F |Z_0=z \r].
\eea 
Note now that the three events in the above conditional probability are completely determined by the random variable $(\Delta \mathbf{T}, \mathbf{X}, \Delta T_{p^2d^2+1})$, which admits a conditional distribution given $\{Z_0 = z\}$ which, jointly with the event $\cap_{\beta=1}^d \{ \kappa_{(\beta-1)p^2d +1} = \kappa_{(\beta-1)p^2d +2} = ...=\kappa_{\beta p^2d} = \beta \}$, has the form
\beas 
\gamma( \bt, \x,t_{p^2d^2+1}, z) dt_1...dt_{p^2d^2+1} \prod_{i=1}^{p^2d^2} \calq_{\beta_i}(x_{i-1},dx_i) ,
\eeas 
where $x_0 = x$ and $\beta_i = \beta$ if and only if $i \in \{(\beta-1)p^2d + 1,..., \beta p^2d\} $, and where $\gamma$ is defined by 
\beas 
\gamma( \bt, \x,t_{p^2d^2+1}, z) = f(t_{p^2d^2+1}, z_{p^2d^2})\prod_{i=1}^{p^2d^2} \frac{\mu^{\beta_i}(t_i, z_{i-1})}{\mu(t_i, z_{i-1})}\cdot f(t_i,z_{i-1}),
\eeas 
with $z_0 = z$, and $z_{i} = (\epsilon_i, x_i)$, $[\epsilon_i]_{\alpha\beta} = e^{-t_i B_{\alpha\beta}} [\epsilon_{i-1}]_{\alpha\beta} + g_{\alpha\beta}(x_i)\mathbb{1}_{\{\beta=\beta_i\}} \cali_{p\times p} $ for $\alpha,\beta \in \{1,...,d\}$, by a repeated application of Proposition \ref{propGen}. The lower bound (\ref{dominationKernel1}) can therefore be rewritten, applying \textbf{[ND2]} 
\beas 
P^T(z,A \times B) \geq \int_{\U} \mathbb{1}_{\{ C(T, \bt, \x, z) \in A\}} \mathbb{1}_{\{ x_{p^2d^2} \in B\}} \mathbb{1}_{E_{T, \bt, t_{p^2d^2+1}}}\gamma( \bt, \x,t_{p^2d^2+1}, z) dt_1...dt_{p^2d^2+1} \prod_{i=1}^{p^2d^2} \calt_{\beta_i}(x_{i-1},dx_i),  
\eeas 
where $\U = \reels_+^{p^2d^2+1} \times \mathbb{X}^{p^2d^2}$. Now, by \textbf{[ND2]}, and since for any $x \in \mathbb{X}$, we have $\int_{\mathbb{X}}r_\beta(x,y)\sigma_\beta(dy) > 0$, we readily construct a sequence $\x^*$ such that $r_{\beta_1}(x_0,x_1^*) >0$, and then $r_{\beta_{i+1}}(x_{i-1}^*, x_i^*) > 0$ for $2 \leq i \leq p^2d^2$. By Lemma \ref{lemGradientInvertible}, define now $(\bt^*, \x^*, z^*)$ where $z^* = (x_0,0)$, and $\x^*$ is as above, and such that $\nabla_{\bt} C(T, \bt^*,\x^*,z^*)$ is invertible, and $\sum_{i=1}^{p^2d^2} t_i^* < T$. By Lemma 6.2 from \cite{benaim2015qualitative}, we may assume that there exists a bounded neighborhood $J \subset \mathbb{X}^{p^2d^2} \times (\E \times \mathbb{X})$ of $(\x^*, z^*)$  and of the form $J = J_1 \times ...\times J_{p^2d^2} \times (J_\epsilon \times J_0)$ where each component in the product is a neighborhood of the related component of $(\x^*, z^*)$, and such that for all $(\x,z) \in J$, there exists a neighborhood $W_{(\x,z)}$ of $\bt^*$ and an open set $I \subset \mathbb{R}^{p^2d^2}$ such that the restriction of $\bt \to C(T, \bt,\x,z)$ on $W_{(\x,z)}$ onto $I$, denoted by $\tilde{C}(T, \bt,\x,z)$, is a diffeomorphism. Moreover, there exists a neighborhood $W$ of $\bt^*$ such that $W_{(\x,z)} \subset W$ for any $(\x,z) \in J$. Without loss of generality, and since for any $T >0,t_{p^2d^2+1} >0$ the set of $\bt$ satisfying $E_{T, \bt, t_{p^2d^2+1}}$ is open, we may further assume that $W \subset E_{T, \bt, t_{p^2d^2+1}}$. Now, using the fact that $\gamma $ is positive and continuous and $\nabla_{\bt} C(T,\bt,\x,z)$ is non-singular on $W$ we have 
$$ \inf_{(\bt,\x,z) \in W \times J} \gamma(\bt,\x,t_{p^2d^2+1},z) \textnormal{det}\l(\nabla_{\bt} C(T,\bt,\x,z)\r) > 0$$
which yields for $z \in J_{\epsilon} \times J_0$, by the change of variable $\boldv = \tilde{C}(T, \bt, \x, z)$
\beas 
P^T(&z,&A \times B)\\ &\geq&  c\int_{\U} \mathbb{1}_{\{ \tilde{C}(T, \bt, \x, z) \in A\}} \mathbb{1}_{\{ (\bt,\x,z) \in W_{\x} \times J\}} \mathbb{1}_{\{ x_{p^2d^2} \in B\}} \textnormal{det}\l(\nabla_{\bt} C(T,\bt,\x,z)\r)^{-1} dt_1...dt_{p^2d^2+1} \prod_{i=1}^{p^2d^2} \calt_{\beta_i}(x_{i-1},dx_i)\\
&\geq& c\int_{\U} \mathbb{1}_{\{ \boldv \in A\}}\mathbb{1}_{\{(\boldv,\x,z) \in I \times J\}} d\boldv dt_{p^2d^2+1} \mathbb{1}_{\{ x_{p^2d^2} \in B\}} \prod_{i=1}^{p^2d^2+1} \calt_{\beta_i}(x_{i-1},dx_i)\\
&=& c \textnormal{Leb}(I \cap A) \int_{J_1 \times...\times J_{p^2d^2} \times (J_{p^2d^2} \cap B)} r_{\beta_1}(x_0,x_1)...r_{\beta_{p^2d^2}}(x_{p^2d^2-1},x_{p^2d^2}) \sigma_{\beta_1}(dx_1)...\sigma_{\beta_{p^2d^2}}(dx_{p^2d^2} )%\calt^{\otimes p^2d^2} (x, \underline{J} \times (J_{p^2d^2} \cap  B) ) \\
\eeas 
for some $c>0$. Up to a further reduction of the size of the neighborhoods $J_0,J_1,...,J_{p^2d^2}$ we may assume that $r_{\beta_i}(x_{i-1},x_i) \geq \underline{r} >0$ for some $\underline{r}$ independent of $i$, on $J_{i-1} \times J_i$, by lower semi-continuity of $r_{\beta_i}$ and the fact that $r_{\beta_i}(x_{i-1}^*,x_i^*) > 0$ with the convention $x_0^* = x_0$. This yields 
\beas 
\inf_{z \in J_\epsilon \times J_0} P^T(z,A \times B) &\geq& c \textnormal{Leb}(I \cap A) \underline{r}^{p^2d^2} \prod_{i=1}^{p^2d^2}\sigma_{\beta_i}(J_i) \sigma_{\beta_{p^2d^2+1}}(J_{p^2d^2+1} \cap B),
\eeas 
where $\textnormal{Leb}$ stands for the Lebesgue measure and $\sigma_{\beta_{p^2d^2+1}}(J_{p^2d^2+1} \cap \cdot)$ is non-trivial by \textbf{[ND2]}. This proves that $U =  J_{\epsilon} \times J_0$ is small for $P^T$. Finally, $U$ is accessible. Indeed, $J_0$ is accessible for $\calq$ by \textbf{[ND2]} (because $x_0\in J_0$ and is reachable for $\calq$), and then $\epsilon^* = 0$ is clearly a reachable point for $\cale$, since $\cale_t \to 0$ for $t \to+\infty$ on the event where there are no jumps after a given time $\overline{t}$ and up to $t$, so that $J_\epsilon$ can be visited by $\cale$ once $X_t$ has reached (and stays in) $J_0$. Finally, the fact that compact sets are petite is a consequence of the Feller property of $Z$ along with Theorem 12.1.10 from \cite{douc2018markov} taking $U$ as the open petite set (note that $P^T$ admits an accessible small set, $U$, and therefore is irreducible as required). 
\end{proof}

\begin{proof}[Proof of Theorem \ref{thmVgeom}]
This is an immediate consequence of the drift condition obtained in Lemma \ref{lemDrift}, the fact that compacts sets are petite by Lemma \ref{lemND}, and Theorem 6.1 from \cite{MeynTweedieprocessiii1993}. Finally the $V$-geometric mixing property is a consequence of
the proof of Theorem 16.1.5 p.398 in \cite{MeynTweedieMarkovChain2009}.
\end{proof}

\begin{proof}[Proof of Corollary \ref{corolVgeom}]

First remark that, by \textbf{[ND1]}, $g_{\alpha\beta}>0$ and therefore the counting measure associated to the jumps of $\cale$ is exactly $N = \overline{N}(\cdot \times \mathbb{X})$. By Theorem \ref{thmVgeom}, $Z$ admits an invariant probability $\pi$. By Theorem 3.1.7 from \cite{douc2018markov} adapted to the case where the time index is $\reels_+$, we readily obtain the existence of a stationary two-sided process $Z'$ on $\reels$ with marginal distribution $\pi$ and generator $\call$, possibly on a larger probability space. This yields existence of a stationary counting process $N'$, and defining for any $\alpha \in \{1,...,d\}$ $\overline{N}^\alpha(ds,dx) = \sum_{i \in \mathbb{Z}} \delta_{( T_i^\alpha, X'_{T_i^\alpha})}(ds,dx)$, where $\{T_i\}_{i \in \mathbb{Z}}$ are the jump times of $N'^\alpha$. By construction, the stochastic intensity $\lambda'$ of $N'$ admits the representation for any $u \leq t$
\beas 
 \lambda'^\alpha(t) = \phi_\alpha \l( \l(\langle A_{\alpha\beta} |e^{-(t-u)B_{\alpha\beta}}\cale_{\alpha\beta}'(u) \rangle + \int_{[u,t) \times \mathbb{X}} \langle A_{\alpha\beta} | e^{-(t-s) B_{\alpha\beta}} \rangle g_{\alpha\beta}(x)\overline{N}_\beta'(ds,dx)\r)_{\beta =1,...,d},X_{t-}'\r)
\eeas 
and taking the limit $u \to -\infty$, and using the stationarity of $\cale'$ yields $\langle A_{\alpha\beta} |e^{-(t-u)B_{\alpha\beta}}\cale_{\alpha\beta}'(u) \rangle \to^\proba 0$, and by continuity of $\phi_\alpha$ in its first argument we get the claimed representation (\ref{eqInfinity}).
\end{proof}

\subsection{Proofs of Section \ref{sectionQLAHawkes}}

\begin{lemma*} \label{lemMomentHawkes}
Assume the conditions of Corollary \ref{thmFinal}. Then \textnormal{\textbf{[A2]}} holds.
\end{lemma*}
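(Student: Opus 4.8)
**The plan is to verify [A2](i)–(iv) by reducing every supremum over $\theta$ and every moment to the uniform exponential bound $C_p e^{\eta f_X(x)}$ supplied by [AH1]–[AH2], combined with the $V$-geometric ergodicity of $Z$ from Theorem \ref{thmVgeom} (which controls moments of $V$, hence of $e^{\eta f_X(X_{t-})}$ and of the excitation process $\cale$, uniformly in time).**

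First I would recall that, by Corollary \ref{corolVgeom} and Theorem \ref{thmVgeom}, the process $Z_t = (\cale_t, X_t)$ started from $Z_0 = z$ satisfies $\esp[V(Z_t)] \leq V(z) + C$ for all $t$, where $V(z) = \exp(\sum_{\alpha\beta} a_{\alpha\beta}\epsilon_{\alpha\beta} + \eta f_X(x))$; in particular $\sup_{t} \esp[e^{k\eta f_X(X_{t-})}] < +\infty$ and $\sup_t \esp[e^{k\langle a_{\alpha\beta}\mid \cale_{\alpha\beta}(t)\rangle}] < +\infty$ for $k$ up to some fixed threshold, and by Jensen/Hölder for all powers after possibly shrinking constants — more precisely, since we may rescale the coefficients $a_{\alpha\beta}, \eta$ freely (the drift condition in Lemma \ref{lemDrift} holds for a range of such constants), any polynomial moment of $\cale_t$ and any exponential moment $e^{\eta f_X(X_{t-})}$ is finite uniformly in $t$. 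I would state this as a preliminary bound.

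Next I would treat [A2](i) and (ii). From \eqref{lambdaGEMHPQLA}, $\lambda^\alpha(t,\theta) = \nu_\alpha(X_{t-},\theta) + \sum_\beta \langle A_{\alpha\beta}(\theta)\mid e^{-tB_{\alpha\beta}(\theta)}\cale_{\alpha\beta}(t,\theta)\rangle$ where, writing things in terms of the stationary elementary excitation, $\cale_{\alpha\beta}(t)$ has all polynomial moments bounded uniformly in $t$ by the preliminary step. Differentiating in $\theta$ up to order $3$ produces terms that are polynomial in $\cale$, in the entries of $A_{\alpha\beta}, B_{\alpha\beta}$ and their derivatives (all bounded on $\overline\Theta$ by [AH1](i)), and in $\partial_\theta^i\nu_\alpha(X_{t-},\theta)$; taking $\sup_\thein\Theta$ and then $\|\cdot\|_p$, Hölder's inequality together with [AH2](i) (the bound $|\partial_\theta^i\nu_\alpha(x,\theta)|^p \leq C_p e^{\eta f_X(x)}$) and the uniform moment bounds gives (i). For (ii), $\lambda^\alpha(t,\theta)^{-1}\mathbb{1}_{\{\lambda^\alpha \neq 0\}} \leq \nu_\alpha(X_{t-},\theta)^{-1}\mathbb{1}_{\{\nu_\alpha \neq 0\}}$ because the excitation part is non-negative (here [ND1] and the positivity of $g_{\alpha\beta}$ are used to match the vanishing sets), and [AH2](i) again controls the inverse moments via $|\nu_\alpha(x,\theta)\mathbb{1}_{\{\nu_\alpha\neq 0\}}|^{-p} \leq C_p e^{\eta f_X(x)}$ together with $\sup_t\esp[e^{\eta f_X(X_{t-})}] < \infty$; one has to be a little careful with the a.e.\ sense of $\lambda^\alpha = 0$, handled by [AH1](ii). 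Finally, for [A2](iii) and (iv), the representation \eqref{repqparam} gives $q^\alpha(t,x,\theta) = p_\alpha(X_{t-},x,\theta)$, so that
\[
\int_\mathbb{X} \esp\Bigl[\sup_{\theta\in\Theta}\bigl|\partial_\theta^i \log q^\alpha(t,x,\theta)\bigr|^p q^\alpha(t,x,\theta^*)\Bigr]\rho(dx) = \esp\Bigl[\int_\mathbb{X}\sup_{\theta}\bigl|\partial_\theta^i\log p_\alpha(X_{t-},x,\theta)\bigr|^p p_\alpha(X_{t-},x,\theta^*)\rho(dx)\Bigr],
\]
and the inner integral is $\leq C_p e^{\eta f_X(X_{t-})}$ by [AH2](ii) (after dealing with the supremum over $\theta$ inside the integral, which can be absorbed by enlarging $p$ slightly and using that $p_\alpha \in C^4(\overline\Theta)$, or by a Sobolev-embedding argument over $\Theta$ as in Lemma \ref{lemAppliSobolev}); taking expectations and using the uniform exponential moment bound on $f_X(X_{t-})$ closes (iii); (iv) is identical with the exponent $-p$, using the corresponding clause of [AH2](ii).

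The main obstacle I anticipate is not any single estimate but the bookkeeping needed to pass from the \emph{transient} bound $\esp[V(Z_t)] \leq V(z) + C$ to genuinely \emph{uniform-in-$t$} moment bounds for $\cale_t$ and $e^{\eta f_X(X_{t-})}$ that are strong enough (i.e.\ for \emph{all} $p$, not just $p$ below some explicit threshold tied to the fixed $a_{\alpha\beta},\eta$) to feed into [AH2]'s requirement "for any $p>1$". The resolution is to exploit the freedom in Lemma \ref{lemDrift} — its conclusion holds with $a_{\alpha\beta}$ replaced by $ka_{\alpha\beta}$ and $\eta$ by $k\eta$ for any $k$ in a neighbourhood of a base value (the proof there tunes a small parameter $\lambda$), which upgrades the exponential moments of $\cale$ and $f_X(X_{t-})$ to arbitrary order; and then to observe that [AH2]'s bound $C_p e^{\eta f_X(x)}$ uses the \emph{fixed} $\eta$ from [L2], so that $\esp[(\cdot) \, e^{\eta f_X(X_{t-})}]$ is bounded uniformly in $t$ as long as \emph{some} exponential moment of $f_X(X_{t-})$ of index strictly above $\eta$ is available, which the preliminary step provides. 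A secondary, purely technical point is the exchange of $\sup_\theta$ and the $\rho$-integral in (iii)–(iv), which I would handle exactly as in the proof of Lemma \ref{lemY} via Sobolev's inequality on $\Theta$ applied to the random field $\theta \mapsto \int_\mathbb{X}|\partial_\theta^i\log p_\alpha(X_{t-},x,\theta)|^p p_\alpha(X_{t-},x,\theta^*)\rho(dx)$ and its $\theta$-derivative, both of which are again dominated by $C_p e^{\eta f_X(X_{t-})}$ by [AH2](ii) with a slightly larger power.
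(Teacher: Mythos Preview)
Your treatment of \textbf{[A2](ii)--(iv)} is correct and essentially identical to the paper's: for (ii) you bound $\lambda^\alpha(t,\theta)^{-1}$ by $\nu_\alpha(X_{t-},\theta)^{-1}$ using non-negativity of the excitation, and for (iii)--(iv) you invoke the representation $q^\alpha(t,x,\theta)=p_\alpha(X_{t-},x,\theta)$ together with \textbf{[AH2](ii)} and the uniform bound $\sup_t\esp[e^{\eta f_X(X_{t-})}]<\infty$ coming from the drift condition. The paper does exactly this and declares these parts immediate.

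There is, however, a genuine gap in your argument for \textbf{[A2](i)}. You write $\lambda^\alpha(t,\theta)$ as a function of an excitation ``$\cale_{\alpha\beta}(t,\theta)$'' and then claim that its polynomial moments are controlled by the preliminary step. But the $V$-geometric ergodicity of Theorem~\ref{thmVgeom} concerns the Markov process $Z_t=(\cale_t,X_t)$ built with the kernel at $\theta^*$ only: it gives moments of $\cale_{\alpha\beta}(t)=\int_{[0,t)}e^{-(t-s)B_{\alpha\beta}(\theta^*)}g_{\alpha\beta}(x,\theta^*)\overline N_\beta(ds,dx)$, not of the $\theta$-dependent object $\int_{[0,t)}e^{-(t-s)B_{\alpha\beta}(\theta)}g_{\alpha\beta}(x,\theta)\overline N_\beta(ds,dx)$, which for $\theta\neq\theta^*$ is \emph{not} a function of the Markov state $Z_t$. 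The same issue arises, a fortiori, for the $\theta$-derivatives. So ``polynomial in $\cale$'' is not available here, and the Lyapunov bound on $V(Z_t)$ does not close the argument.

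The paper resolves this by working directly with the stochastic integral against $\overline N_\beta$. Using the uniform (in $\theta$) exponential decay
\[
\bigl|\partial_\theta^i h_{\alpha\beta}(s,x,\theta)\bigr|\ \leq\ M\,e^{-rs/2}\sum_{j\leq i}\bigl|\partial_\theta^j g_{\alpha\beta}(x,\theta)\bigr|,
\]
it reduces the problem to bounding $\esp\bigl|\int_{[0,t)\times\mathbb X}e^{-r(t-s)/2}\sup_\theta|\partial_\theta^j g_{\alpha\beta}(x,\theta)|\,\overline N_\beta(ds,dx)\bigr|^p$. This is handled via the BDG-type inequality of Lemma~\ref{lemBDG}, which splits the $p$-th moment into terms involving the compensator $\lambda^\beta(s,\theta^*)q^\beta(s,x,\theta^*)ds\rho(dx)$; each term is then controlled by Jensen's inequality with respect to the probability measure proportional to $e^{r(s-t)/2}q^\beta(s,x,\theta^*)ds\rho(dx)$, Cauchy--Schwarz, and the fact that $\sup_t\esp[\lambda^\beta(t,\theta^*)^q]<\infty$ (this last bound \emph{does} follow from $V$-ergodicity, since $\lambda^\beta(\cdot,\theta^*)$ is a function of $Z$). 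This is the missing ingredient in your sketch; once you see that the history-dependent integral must be attacked through its compensator rather than through the state $Z_t$, the rest of your outline is sound.
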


\begin{proof}
Given \textbf{[AH2]} and the $V$-geometric ergodicity of $Z$, note that we directly have 
$$\sup_{t \in \reels_+} \sum_{i=0}^3  \esp \l[\sup_{\theta \in \Theta} |\partial_\theta^i g_{\alpha\beta}(X_{t-}, \theta)|^p  \r]< +\infty,$$
$$\sup_{t \in \reels_+}  \sum_{i=0}^3 \int_\mathbb{X} \esp \l[\sup_{\theta \in \Theta} \l|\partial_\theta^i \textnormal{log}p_\alpha(X_{t-},x,\theta)\r|^p  p_\alpha(X_{t-},x,\theta^*)\r] \rho(dx)  <+\infty,$$
$$\sup_{t \in \reels_+}  \sum_{i=0}^3 \int_\mathbb{X} \esp \l[\sup_{\theta \in \Theta} \l|\partial_\theta^i \textnormal{log}p_\alpha(X_{t-},x,\theta)\r|^{-p} p_\alpha(X_{t-},x,\theta^*) \r] \rho(dx)  <+\infty,$$
and 
$$\sup_{t \in \reels_+} \esp\l[ \sup_{\theta \in \Theta} \nu_\alpha(X_{t-},\theta)^{-p} \mathbb{1}_{\{\nu_\alpha(X_{t-},\theta) \neq 0\}}\r] < +\infty,$$
which readily yields \textbf{[A2](ii)-(iv)}, and so only \textbf{(i)} remains to be proved. For $i \in \{0,...,4\}$, recall that we have 
\beas 
|\partial_\theta^i \lambda^\alpha(t,\theta)| \leq \nu_\alpha(X_{t-},\theta) + \sum_{\beta=1}^d \int_{[0,t) \times \mathbb{X}} \l|\partial_\theta^{i} h_{\alpha\beta}(t-s, x, \theta)\r| \overline{N}_\beta(ds, dx)
\eeas 
and since $\sup_{t \in \reels_+} \esp \sup_{\theta \in \Theta}|\nu(X_{t-},\theta) |^p  <+\infty$  by \textbf{[AH2]} and the $V$-geometric ergodicity of $Z$, we have to prove is that for any $\alpha,\beta \in \{1,...,d\}$, any $p \geq 1$, any $t \geq 0$, 
\bea \label{eqSupA2}
\esp \l| \int_{[0,t) \times \mathbb{X}}\sup_{\theta \in \Theta} |\partial_\theta^{i}  h_{\alpha\beta}(t-s, x, \theta)| \overline{N}_\beta(ds, dx)\r|^p <+\infty.
\eea
Note that by Proposition \ref{propKernel}, and the fact that the real parts of all eigenvalues of $B_{\alpha\beta}$ are larger than $r>0$ uniformly in $\theta$, we deduce that for any $\theta \in \Theta$
\bea \label{eqDevhg} 
\l|\partial_\theta^{i}  h_{\alpha\beta}(t, x, \theta)\r| \leq M e^{-\frac{r}{2}t} \sum_{j=0}^i \l|\partial_\theta^j g_{\alpha\beta}(x,\theta)\r|.
\eea 
Now, by Lemma \ref{lemBDG}, we have 
\beas 
\esp \l| \int_{[0,t) \times \mathbb{X}} e^{-\frac{r}{2}(t-s)}\sup_{\theta \in \Theta} \l|\partial_\theta^j g_{\alpha\beta}(x,\theta)\r| \overline{N}_\beta(ds, dx)\r|^p &\leq& C(I + II + III)
\eeas 
for some $C>0$ with 
\beas 
I &=&  \esp \l| \int_{[0,t) \times \mathbb{X}}e^{-\frac{r}{2}(t-s)}\sup_{\theta \in \Theta} \l|\partial_\theta^j g_{\alpha\beta}(x,\theta)\r| \lambda_\beta(s,\theta^*) q^\beta(s,x,\theta^*) ds \rho(dx)\r|^p \\
II &=&  \esp  \int_{[0,t) \times \mathbb{X}}e^{-\frac{rp}{2}(t-s)}\sup_{\theta \in \Theta} \l|\partial_\theta^j g_{\alpha\beta}(x,\theta)\r|^p \lambda_\beta(s,\theta^*) q^\beta(s,x,\theta^*) ds \rho(dx) \\
III &=&  \esp \l| \int_{[0,t) \times \mathbb{X}}e^{-r(t-s)}\sup_{\theta \in \Theta} \l|\partial_\theta^j g_{\alpha\beta}(x,\theta)\r|^2 \lambda_\beta(s,\theta^*) q^\beta(s,x,\theta^*) ds \rho(dx)\r|^{p/2}.
\eeas 
We prove that $I$ is dominated by a constant uniformly in $t \in \reels_+$. The cases of $II$ and $III$ follow the same line of reasoning. When $t=0$, we directly have $II=0$. Assume therefore and without loss of generality that $t>0$. Define the probability measure $\eta(ds,dx) = \l(\int_0^t e^{\frac{r}{2}s}ds\r)^{-1}e^{\frac{r}{2}s}q^\beta(s,x,\theta^*)ds\rho(dx)$ on $[0,t] \times \mathbb{X}$. Applying Jensen's inequality to $\eta$, we get
\beas 
I &=&  \l(\int_0^t e^{\frac{r}{2}s}ds\r)^{p} \esp \l| \int_{[0,t) \times \mathbb{X}}e^{-\frac{rt}{2}}\sup_{\theta \in \Theta} \l|\partial_\theta^j g_{\alpha\beta}(x,\theta)\r| \lambda_\beta(s,\theta^*) \eta(ds,dx)\r|^p\\
&\leq&   \l(\int_0^t e^{\frac{r}{2}s}ds\r)^{p}  \esp\int_{[0,t) \times \mathbb{X}}e^{-\frac{rpt}{2}}\sup_{\theta \in \Theta} \l|\partial_\theta^j g_{\alpha\beta}(x,\theta)\r|^p \lambda_\beta(s,\theta^*)^p \eta(ds,dx)\\
&\leq&   \underbrace{\l(\int_0^t e^{\frac{r}{2}(s-t)}ds\r)^{p-1}}_{\leq M < +\infty}  \int_{[0,t) \times \mathbb{X}} e^{-\frac{r}{2}(t-s)}\esp \sup_{\theta \in \Theta} \l|\partial_\theta^j g_{\alpha\beta}(x,\theta)\r|^p \lambda_\beta(s,\theta^*)^p q^\beta(s,x,\theta^*) ds \rho(dx).\\
\eeas 
Now, recall that $\lambda_\beta(\cdot,\theta^*)$ is a sub-linear combination of the components of $\cale$, and so by the $V$-geometric ergodicity of Theorem \ref{thmVgeom} with $V$ being exponential in $\epsilon$, we readily get that $\sup_{t \in \reels_+} \esp[\lambda^\beta(t,\theta^*)]^q < +\infty$ for any $q \geq 1$. Combined with Cauchy-Schwarz inequality and \textbf{[AH2]-(iii)}, and then Jensen's inequality with respect to the probability measure $q^\beta(s,x,\theta^*)\rho(dx)$ we obtain for any $s \in [0,t]$
\beas
\esp  \int_{\mathbb{X}} \sup_{\theta \in \Theta} |\partial_\theta^j g_{\alpha\beta}&(x,\theta)&|^p \lambda_\beta(s,\theta^*)^p q^\beta(s,x,\theta^*)  \rho(dx) 
\\&\leq& \sqrt{\esp \lambda_\beta(s,\theta^*)^{2p} }\sqrt{ \esp \l(\int_{\mathbb{X}} \sup_{\theta \in \Theta} \l|\partial_\theta^j g_{\alpha\beta}(x,\theta)\r|^p q^\beta(s,x,\theta^*)  \rho(dx)\r)^{2} }\\
&\leq& C \sqrt{ \esp  \int_{\mathbb{X}} \sup_{\theta \in \Theta} \l|\partial_\theta^j g_{\alpha\beta}(x,\theta)\r|^{2p} q^\beta(s,x,\theta^*)  \rho(dx)  }\\
&\leq& C',
\eeas 
for two constants $C,C' >0$. Back to the expression of $I$, we readily get the uniform boundedness of $I$ with respect to $t \in \reels_+$.
\end{proof}

\begin{lemma*} \label{lemErgoHawkes}
Assume the conditions of Corollary \ref{thmFinal}. Then \textnormal{\textbf{[A3]}} holds for any $\gamma \in (0,1/2)$, with, for any $\alpha \in \{1,...,d\}$, $\theta \in \Theta$, and $\phi \in D_\uparrow(E,\reels)$  
$$ \pi_\alpha(\phi,\theta) = \esp\l[\phi(\lambda'^\alpha(0,\theta^*),\lambda'^\alpha(0,\theta), \partial_\theta \lambda'^\alpha(0,\theta) )\r]$$
and
$$\chi_\alpha(k,\theta) = \esp\l[ \int_{\mathbb{X}}  \partial_\theta^k  \textnormal{log}q'^\alpha(0,x,\theta)q'^\alpha(0,x,\theta^*)\rho(dx) \lambda'^\alpha(0,\theta^*)\r]$$
where $(\lambda',q')$ are the stochastic intensity and the mark density of the stationary version $\overline{N}'$ defined in Corollary \ref{corolVgeom}. 
\end{lemma*}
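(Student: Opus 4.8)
The plan is to reduce \textbf{[A3]} to a law of large numbers with $\mathbb{L}^p$ rate $T^\gamma$ for time averages of functionals of (parameter-dependent) $V$-geometrically ergodic Markov processes, and then to invoke the mixing criterion of \cite{clinet2017statistical} (condition \textbf{[M2]} and Lemma 3.16): since geometric mixing decays faster than every polynomial rate $t^{-\epsilon}$, the requirement $\epsilon > 2\gamma/(1-2\gamma)$ there is met for all $\gamma \in (0,1/2)$, which gives the result for any such $\gamma$. All the moment bookkeeping is provided by Lemma \ref{lemMomentHawkes} (that is, \textbf{[A2]}). I would first treat the term involving $q$, which is the easier one: by (\ref{repqparam}) its integrand equals $G_\alpha^k(X_{s-},\theta)\,\lambda^\alpha(s,\theta^*)$ with $G_\alpha^k(x,\theta) = \int_\mathbb{X}\partial_\theta^k\textnormal{log}\,p_\alpha(x,y,\theta)\,p_\alpha(x,y,\theta^*)\rho(dy)$, and since both $X_{s-}$ and $\lambda^\alpha(s,\theta^*)$ are functions of $Z_{s-}$, this is a functional of the Markov process $Z$ of Section \ref{sectionHawkes}, which is $V$-geometrically ergodic and $V$-geometrically mixing by Theorem \ref{thmVgeom}. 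By \textbf{[AH2](ii)}, Jensen's inequality and \textbf{[A2]}, this functional has $\mathbb{L}^{p'}$ norm bounded uniformly in $(s,\theta)$ for every $p' \geq 1$, so the law of large numbers applies with a rate uniform in $\theta$; rewriting the stationary limit through the two-sided stationary version $\overline{N}'$ of Corollary \ref{corolVgeom} yields exactly the stated formula for $\chi_\alpha(k,\theta)$.

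For the term involving $\lambda$ the difficulty is that, for $\theta \neq \theta^*$, $\lambda^\alpha(s,\theta)$ is \emph{not} a function of $Z_{s-}$, because the kernel $h_{\alpha\beta}(\cdot,x,\theta)$ uses the $\theta$-dependent matrix $B_{\alpha\beta}(\theta)$. I would remedy this by augmenting the state. Setting
$$\cale^\theta_{\alpha\beta}(s) = \int_{[0,s]\times\mathbb{X}} e^{-(s-u)B_{\alpha\beta}(\theta)} g_{\alpha\beta}(x,\theta)\,\overline{N}_\beta(du,dx),$$
Proposition \ref{propKernel} gives $\int_{[0,s)\times\mathbb{X}} h_{\alpha\beta}(s-u,x,\theta)\overline{N}_\beta(du,dx) = \langle A_{\alpha\beta}(\theta)\,|\,\cale^\theta_{\alpha\beta}(s-)\rangle$, so that $\lambda^\alpha(s,\theta) = \nu_\alpha(X_{s-},\theta) + \sum_\beta\langle A_{\alpha\beta}(\theta)\,|\,\cale^\theta_{\alpha\beta}(s-)\rangle$ and $\partial_\theta\lambda^\alpha(s,\theta)$ is expressed through $\cale^\theta$ and $\partial_\theta\cale^\theta$; note $\cale^{\theta^*}$ is the process $\cale$ of (\ref{defExcitationElementaire}). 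Between jumps $(\cale^\theta,\partial_\theta\cale^\theta)$ solves a stable cascaded linear system driven by itself, with the block structure of (\ref{polynomExample})-(\ref{cosExample}); at a jump time $T_i$ on component $\beta$ it jumps by $(g_{\alpha\gamma}(X_i,\theta),\partial_\theta g_{\alpha\gamma}(X_i,\theta))\mathbb{1}_{\{\gamma=\beta\}}$, while $\Delta T_i$, $\kappa_i$ and $X_i$ depend only on $Z_{T_{i-1}}$. Hence $\hat Z^\theta = (Z,\cale^\theta,\partial_\theta\cale^\theta)$ is a Feller Markov process, by the argument of Proposition \ref{propGen}.

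The core step is to show that $\hat Z^\theta$ is geometrically ergodic and mixing with respect to a norm-like function $\hat V$, with all constants uniform in $\theta \in \Theta$. For the Lyapunov function I would take $\hat V(\hat z) = V(z)\exp(\sum_{\alpha,\beta} b_{\alpha\beta}(|\cale^\theta_{\alpha\beta}| + |\partial_\theta\cale^\theta_{\alpha\beta}|))$ with $V$ from Lemma \ref{lemDrift} (whose coefficients can be taken small) and small enough $b_{\alpha\beta} > 0$: the new coordinates decay exponentially between jumps because the eigenvalues of $B_{\alpha\beta}(\theta)$ have real parts $\geq r > 0$ uniformly in $\theta$, while the jump contributions are controlled by the uniform exponential moments of $g_{\alpha\beta}(\cdot,\theta)$ and $\partial_\theta g_{\alpha\beta}(\cdot,\theta)$ under $\calq_\beta$ from \textbf{[AH2](i)}, so the drift computation of Lemma \ref{lemDrift} goes through with an additional negative term and constants $\delta,L$ uniform in $\theta$. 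The minorization of Lemma \ref{lemND} extends: the reachable point is now the state with all excitation coordinates zero and mark coordinate $x_0$, and Lemma \ref{lemGradientInvertible} still applies once $C$ is replaced by the analogous map sending (enough of) the first jump data to $(\cale_T,\cale^\theta_T,\partial_\theta\cale^\theta_T)$, since all its blocks are matrix-exponentials in time with eigenvalues of positive real part, so that after a dimension reduction (Remark \ref{rmkLinearlyIndependent}) the corresponding time-functions are linearly independent and Lemma \ref{lemDeterminant} gives a non-degenerate Jacobian, the argument being run on the closure of the reachable set of $\hat Z^\theta$. Theorem 6.1 of \cite{MeynTweedieprocessiii1993} then gives $\hat V$-geometric ergodicity and the proof of Theorem 16.1.5 of \cite{MeynTweedieMarkovChain2009} gives $\hat V$-geometric mixing, uniformly in $\theta$ by the uniformity in $\theta$ of $r$ and of the bounds in \textbf{[AH1]-[AH2]}.

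It then remains to conclude and to identify the limit. For $\phi \in D_\uparrow(E,\reels)$, the integrand $\phi(\lambda^\alpha(s,\theta^*),\lambda^\alpha(s,\theta),\partial_\theta\lambda^\alpha(s,\theta))$ is a functional $F_\theta(\hat Z^\theta_{s-})$; since $\phi$ is of polynomial growth in $\lambda^\alpha(s,\theta^*)$, $\lambda^\alpha(s,\theta)$, $\partial_\theta\lambda^\alpha(s,\theta)$ and in $\mathbb{1}_{\{\lambda^\alpha(s,\cdot)\neq0\}}/\lambda^\alpha(s,\cdot)$, and all of these have $\mathbb{L}^{p'}$ norms bounded uniformly in $(s,\theta)$ by \textbf{[AH1](ii)} and \textbf{[A2]}, Hölder's inequality gives $\sup_\theta\sup_s\|F_\theta(\hat Z^\theta_{s-})\|_{p'} < +\infty$ for all $p'$, and the law of large numbers of \cite{clinet2017statistical} applied to $\hat Z^\theta$ yields \textbf{[A3]} with a rate uniform in $\theta$, the limit being the expectation of $F_\theta$ under the invariant law of $\hat Z^\theta$. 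To identify this with $\pi_\alpha(\phi,\theta)$, one runs $(\cale^\theta,\partial_\theta\cale^\theta)$ from $-\infty$ along the two-sided stationary version of Corollary \ref{corolVgeom}: exactly as in the proof of that corollary, $e^{-(0-u)B_{\alpha\beta}(\theta)}\cale^\theta_{\alpha\beta}(u) \to^\proba 0$ as $u \to -\infty$, whence $\langle A_{\alpha\beta}(\theta)\,|\,\cale^\theta_{\alpha\beta}(0)\rangle = \int_{(-\infty,0)\times\mathbb{X}} h_{\alpha\beta}(-s,x,\theta)\overline{N}'_\beta(ds,dx)$, so that $(\lambda^\alpha(0,\theta^*),\lambda^\alpha(0,\theta),\partial_\theta\lambda^\alpha(0,\theta))$ has under the invariant law of $\hat Z^\theta$ the law of $(\lambda'^\alpha(0,\theta^*),\lambda'^\alpha(0,\theta),\partial_\theta\lambda'^\alpha(0,\theta))$, which gives the claimed formula. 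The step I expect to be the main obstacle is the third one: that the enlarged process $\hat Z^\theta$ genuinely inherits $\hat V$-geometric ergodicity \emph{with constants uniform in $\theta$} — in particular the linear independence, after dimension reduction, of the time-functions coming simultaneously from the blocks $e^{-\cdot B_{\alpha\beta}(\theta^*)}$, $e^{-\cdot B_{\alpha\beta}(\theta)}$ and $\partial_\theta e^{-\cdot B_{\alpha\beta}(\theta)}$, together with the uniform control of the associated constants; obtaining the $\mathbb{L}^p$ (and not merely $\mathbb{L}^2$) form of the rate-$T^\gamma$ law of large numbers from geometric mixing is a further point, but this is precisely what Lemma 3.16 of \cite{clinet2017statistical} supplies.
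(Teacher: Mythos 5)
Your proposal takes a genuinely different route from the paper's, and although you correctly flag where the difficulty lies, that difficulty is left unresolved and is, I believe, fatal to the route you sketch. The paper deliberately avoids enlarging the Markov process: it keeps $Z=(\cale,X)$ -- whose law does not depend on $\theta$ -- as the only Markov chain, and treats the $\theta$-dependence of $\lambda^\alpha(\cdot,\theta)$ via a truncation argument adapted from Lemma A.6 of \cite{clinet2017statistical}. One introduces the truncated intensity $\tilde X_1(s,t,\theta)$ built from the jumps in $[s,t)$ only (together with $Z_s$), and uses that $h_{\alpha\beta}(\cdot,x,\theta)$ and $\partial_\theta h_{\alpha\beta}(\cdot,x,\theta)$ decay like $e^{-rt/2}$ \emph{uniformly in $\theta$} (since the eigenvalues of $B_{\alpha\beta}(\theta)$ all have real parts $\geq r>0$) to show that $\tilde X(s,t,\theta)$ approximates $Y_1^\alpha(t,\theta)$ in $\mathbb{L}^p$ with an error $O(e^{-c(t-s)})$, after which (\ref{eqLLNproof12}) and (\ref{eqLLNproof13}) follow from the $V$-geometric mixing of $Z$ plus this truncation error. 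The whole point is that every constant is automatically uniform in $\theta$, because the Markov machinery lives entirely on the $\theta$-free process $Z$.

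The gap in your augmentation $\hat Z^\theta=(Z,\cale^\theta,\partial_\theta\cale^\theta)$ sits precisely in the small-set step, and it is not a soft difficulty. To reproduce Lemma \ref{lemND} for $\hat Z^\theta$ you must run Lemma \ref{lemGradientInvertible} and hence Lemma \ref{lemDeterminant}, which needs the time-functions coming from the blocks of $e^{-\cdot B_{\alpha\beta}(\theta^*)}$, $e^{-\cdot B_{\alpha\beta}(\theta)}$ and $\partial_\theta e^{-\cdot B_{\alpha\beta}(\theta)}$ to be linearly independent. But at $\theta=\theta^*$ the first two coincide, and as $\theta\to\theta^*$ the relevant determinant tends to $0$, so the minorization constant cannot be bounded below uniformly over $\Theta$. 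Appealing to Remark \ref{rmkLinearlyIndependent} does not repair this: the set of redundant coordinates to remove depends on $\theta$ and changes discontinuously at $\theta^*$, so there is no single augmented state space, no single $\hat V$, and no single small set working uniformly. Since \textbf{[A3]} is a $\sup_{\theta\in\Theta}$ statement, this uniformity is the content of the lemma, not a refinement; without it the Markov-augmentation route does not yield the claim. Your handling of the $\chi_\alpha$ term (a genuine functional of $Z$) and your identification of the limit through Corollary \ref{corolVgeom} are fine and match the paper's.
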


\begin{proof}
Fix some $\gamma \in (0,1/2)$. Introduce for $\alpha \in \{1,...,d\}$ the processes
\beas  
Y_1^\alpha(t,\theta) = \l(\lambda^\alpha(t,\theta^*),\lambda^\alpha(t,\theta),\partial_\theta \lambda^\alpha(t,\theta) \r),
\eeas  
\beas 
Y_2^\alpha(t,\theta) =\int_\mathbb{X} \partial_\theta^k  \textnormal{log}q^\alpha(t,x,\theta)q^\alpha(t,x,\theta^*)\rho(dx) \lambda^\alpha(t,\theta^*).
\eeas 
We are going to prove that there exists $\pi_\alpha$ as in \textbf{[A3]} such that for any $\phi \in D_\uparrow(E,\reels)$ , we have 
\bea \label{eqLLNproof1}
\sup_{\theta \in \Theta} T^{\gamma} \l\|T^{-1} \int_0^T \phi(Y_1^\alpha(s,\theta))ds - \pi_\alpha(\phi,\theta)\r\|_p \to 0.
\eea 
The case for $Y_2^{\alpha}$ follows a similar path, using that $q$ can be written as a function of $\lambda$ and $X$ by (\ref{repqparam}). By Lemma 3.16 and assumption \textbf{[M2]} from \cite{clinet2017statistical}, it is sufficient to prove that
for any $\phi,\psi \in  D_\uparrow(E,\reels)$ we have the mixing property 
\bea \label{eqLLNproof12}
\textnormal{Cov}\l[\phi(Y_1^\alpha(t,\theta)), \psi(Y_1^\alpha(t,\theta))\r] \leq C e^{-\tilde{r}u}
\eea 
for some $\tilde{r} > 2\gamma/(1-2\gamma)$, along with 
\bea \label{eqLLNproof13}
T^\gamma \l(\esp[\phi(Y_1^\alpha(t,\theta))] - \pi_\alpha(\phi,\theta)\r) \to 0
\eea 
in order to establish (\ref{eqLLNproof1}). Proving (\ref{eqLLNproof12})-(\ref{eqLLNproof13}) boils down to following the same proof as that of Lemma A.6 in \cite{clinet2017statistical}, replacing the truncated process $\tilde{X}$ in the original proof by the new expression
\beas 
\tilde{X}(s,t,\theta) = (\lambda^\alpha(t,\theta^*), \tilde{X}_1(s,t,\theta), \tilde{X}_2(s,t,\theta))
\eeas
with 
\beas 
\tilde{X}_1(s,t,\theta)  = \phi_\alpha\l( \l(\int_{[s,t) \times \mathbb{X}} h_{\alpha\beta}(t-s, x, \theta) \overline{N}_\beta(ds, dx)\r)_{\beta =1,...,d}, X_{t-}, \theta\r), 
\eeas
and
\beas 
\tilde{X}_2(s,t,\theta)  = \partial_\theta \tilde{X}_1(s,t,\theta). 
\eeas
Now, following closely the reasoning of the proof of Lemma A.6 in \cite{clinet2017statistical}, using that the kernels $h_{\alpha\beta}$ and $\partial_\theta h_{\alpha\beta}$ are exponentially decreasing in time, and replacing $\cale$ by $Z$ in all conditional expectations and applying Theorem \ref{thmVgeom} for the $V$-geometric ergodicity of $Z$ yields (\ref{eqLLNproof12}). Similar arguments yield (\ref{eqLLNproof13}) where 
$$\pi_\alpha(\phi,\theta) = \esp[\phi(\lambda'^\alpha(0,\theta^*),\lambda'^\alpha(0,\theta), \partial_\theta \lambda'^\alpha(0,\theta) )].$$
%where $\lambda'$ is the stationary stochastic intensity of the stationary marked Hawkes process starting from $-\infty$ introduced in Corollary \ref{corolVgeom}, having the representation (\ref{corolVgeom}).
\end{proof}

We are now ready to prove Lemma \ref{lemLimitYHawkes} and Theorem \ref{thmFinal}.

\begin{proof}[Proof of Lemma \ref{lemLimitYHawkes}.]
By \textbf{[AH1]-[AH3]} and Lemmas \ref{lemErgoHawkes}-\ref{lemMomentHawkes}, \textbf{[A1]-[A3]} are satisfied so that Lemma \ref{lemY} holds. The shape of the limit field $\mathbb{Y}$ is then an immediate consequence of \textbf{[A3]} and the shape of $\pi_\alpha$ and $\chi_\alpha$ in Lemma \ref{lemErgoHawkes}. 
\end{proof}

\begin{proof}[Proof of Corollary \ref{thmFinal}]
By Lemma \ref{lemErgoHawkes}, Lemma \ref{lemMomentHawkes} and \textbf{[AH1]-[AH3]}, \textbf{[A1]-[A4]} hold and therefore we conclude by application of Theorem \ref{thmQLA2}. 
\end{proof}
\bibliography{biblio}
% BibTeX users please use one of
%\bibliographystyle{spbasic}      % basic style, author-year citations
\bibliographystyle{apalike} %{apalike}      % mathematics and physical sciences

\end{document}